\newcommand{\nnb}{\nonumber}
\newcommand{\p}{\partial}
\newcommand{\dtau}{\partial_{ \tau}}
\newcommand{\D}{\breve{D}}
\newcommand{\n}{\nabla}
\newcommand{\di}{\mathrm{d}} 
\newcommand{\tr}{\text{tr}}
\newcommand{\dive}{\text{div}}
\newcommand{\curl}{\text{curl}}
\newcommand{\lie}{\mathcal{L}} 
\newcommand\ti{\tilde}
\newcommand{\W}{\breve{W}}
\newcommand{\M}{\mathcal{M}}
\newcommand{\Wb}{\overline{ W}}
\newcommand{\Rmb}{\breve{R}}
\newcommand{\Ew}{\mathcal{E}}
\newcommand{\Hw}{\mathcal{H}}
\newcommand{\Jw}{\mathcal{J}}
\newcommand{\Kw}{\mathcal{K}}
\newcommand{\Gm}{\mathcal{G}}
\newcommand{\Hwb}{\breve{\mathcal{H}}}
\newcommand\al{\alpha}
\newcommand\ga{\gamma}
\newcommand\de{\delta}
\newcommand\De{\Delta}
\newcommand\Ld{\Lambda}
\newcommand\Si{\Sigma}
\newcommand\Ph{\Phi}
\newcommand\Ps{\Psi}
\newcommand{\vep}{\varepsilon}
 \def\<{\left\langle} \def\>{\right\rangle}
 \def\({\left(} \def\){\right)}
\newcommand{\bpf}{\begin{proof}}
 \newcommand{\epf}{\end{proof}}
\newcommand{\bg}{\begin}
 \newcommand{\ed}{\end}
\newcommand{\beq}{\begin{equation}}
\newcommand{\eeq}{\end{equation}}
\newcommand{\ali}[2]{
	\begin{align}\label{E:#1}
		#2
	\end{align}
}
\newcommand{\alis}[1]{
	\begin{align*}
		#1
	\end{align*}
}
\newtheorem{theorem}{Theorem}[section]
\newtheorem{lemma}[theorem]{Lemma}
\newtheorem{proposition}[theorem]{Proposition}
\newtheorem{corollary}[theorem]{Corollary}
\newtheorem{remark}[theorem]{Remark}
\numberwithin{equation}{section}
\begin{document}
\title[Non-compact Einstein attractors in $1+n$]{Noncompact $n$-dimensional Einstein spaces as attractors for the Einstein flow}

\author[J. Wang]{Jinhua Wang} 
\email{wangjinhua@xmu.edu.cn}
\address{School of Mathematical Sciences, Xiamen University, Xiamen 361005, China}


\begin{abstract}
 We prove that  along with the Einstein flow, any small perturbations of an $n$($n\geq4$)-dimensional, non-compact negative Einstein space with some ``non-positive Weyl tensor” lead to a unique and global solution, and the solution will be attracted to a noncompact Einstein space that is close to the background one. The $n=3$ case has been addressed in \cite{W-Y-open-M}, while in dimension $n\geq 4$, as we know, negative Einstein metrics in general have non-trivial moduli spaces. This fact is reflected on the structure of Einstein equations, which further indicates no decay for the spatial Weyl tensor. Furthermore, it is suggested in the proof that the mechanic preventing the metric from flowing back to the original Einstein metric lies in the non-decaying character of spatial Weyl tensor. In contrary to the compact case considered in Andersson-Moncrief \cite{A-M-11-cmc}, our proof is independent of the theory of infinitesimal Einstein deformations. Instead, we take advantage of the inherent geometric structures of Einstein equations and develop an approach of energy estimates for a hyperbolic system of Maxwell type. 
\end{abstract}
\maketitle

\section{Introduction}\label{sec-intro}

 \subsection{Background}
Let $M$ be an $n$-dimensional, complete, non-compact Riemannian manifold admitting a negative Einstein metric $\gamma$\footnote{It follows from Myers' theorem \cite{Myers-35} that complete, non-compact Einstein metrics have a non-positive Einstein constant.}. We let the Einstein constant be $-(n-1)$ after rescaling.
There are a wealth of examples of complete, non-compact Einstein spaces, see for instance  \cite[\S 7 D, \S15]{Besse-Einstein}.  
Let $\mathcal M$ be a $(1+n)$-manifold of the form $\mathbb{R} \times M$. 
Then the \emph{Lorentz cone spacetime} $(\mathcal M, \bar\gamma)$ with $\bar\gamma$ given by $$\bar\gamma = -dt^2 + t^2 \gamma$$ is a solution to the vacuum Einstein equations in dimension $1+n$. When $n=3$, $(\mathcal M,\, \bar\gamma)$ is flat and known as the (open) \emph{Milne model}.

In the case of $n=3$,  the  (open) Milne spacetime is embedded into Minkowski spacetime. There were related stability results for Minkowski spacetime \cite{Friedrich-91, Christodoulou-K-93, Lind-Rod-05, Lind-Rod-10}. 

If the spatial manifold $M$ is closed (compact without boundary), Andersson and Moncrief \cite{A-M-03-local, A-M-11-cmc} first proved the stability of $(1+n)$-dimensional spacetime $(\mathcal{M}, \, \bar \ga)$ when assuming that the background, spatial manifold $(M, \, \ga)$ is stable (that is, the Einstein operator has non-negative eigenvalues) with a smooth moduli space. Therein \cite{A-M-11-cmc} they showed that the decay rates of gravity depend on the lowest eigenvalue of the Einstein operator on $(M, \ga)$. The proof was based on CMCSH gauge and energy estimates through a wave-type energy for the gravity.   In the particular case of $n=3$, when combined with a sharp estimate on the lower bound for the eigenvalues of Einstein operator \cite{Koiso-78, Kroncke-15}, the method of \cite{A-M-11-cmc} provided almost $t^{-1}$ decay estimates and had prompted more researches in non-vacuum context \cite{Andersson-Fajman-20, Branding-Fajman-Kroencke-18,Fajman-Wyatt-EKG, Barzegar-Fajman-charged, F-O-O-W-fluid23}.

Alternatively, when considering the $(1+3)$-dimensional Einstein Klein--Gordon system, we in \cite{Wang-J-EKG-2019} adopted the CMC gauge with zero shift and carried out the energy estimates through Bel--Robinson energy (cf. \cite{A-M-04,Christodoulou-K-93}). Remarkably, our proof suggests the decay rates of geometric quantities, such as the Weyl tensor and the second fundamental form, are essentially independent of the stability properties of (spatial) Einstein geometry. Instead, the proof exhibits that the decay rates depend on the expanding geometry of Milne spacetime and this has been reflected in the structure of Bianchi equations and other geometric structure equations. With the same decay mechanic, later  in \cite{Wang-KK-21}, we proved global existence for a nonlinear wave model in the Kaluza--Klein spacetime over the closed Milne model.

Although the above results are all focused on the case of compact spatial manifold, we in principle expect stability results hold for some noncompact cases as well. For this purpose, we intend to develop techniques that are independent of the lower bound for the eigenvalues of Einstein operator, or the theory for moduli spaces of Einstein metrics, or even the CMC foliations (which will involve constructing CMC data), since on these topics, little is known in the noncompact case. To begin with, we note that the decay mechanic disclosed in \cite{Wang-J-EKG-2019} has made it clear that the dynamic part of the proof (not including the construction of data) is also valid when the spatial manifold is noncompact. Apart from \cite{Wang-J-EKG-2019}, the proof in \cite{Wang-KK-21} holds as well if the closed Milne model were replaced by the open one. Given these hints, the author and Yuan \cite{W-Y-open-M} modified the framework proposed in \cite{Wang-J-EKG-2019} using Gaussian normal gauge\footnote{The local existence theorem for vacuum Einstein equations in Gaussian normal gauge had been implied by the work of Andersson--Rendall \cite{Andersson-Rendall-quiescent} in the analytic category, while for data with bounded energy, it was accomplished by Fournodavlos--Luk \cite{Fournodavlos:2021aa, Fournodavlos-Luk-Kasner}. }, and then addressed the stability of the open Milne spacetime. Meanwhile, the method of \cite{W-Y-open-M} has in turn provided a simplified proof for \cite{Wang-J-EKG-2019}.

From other perspectives, stability of noncompact spaces such as hyperbolic space appears in the context of Ricci flow \cite{Li-Yin-IMRN, Schulze-hyper-11, Bamler-GAFA} as well. These works are concerned with parabolic equations for which methods are quite different from that for hyperbolic equations and the proofs therein relied on analysis for the spectrum of Laplacian or Einstein operator, or some Poincar\'{e} type inequalities.

In this paper, we continue with the interest on the noncompact topic and investigate the stability problem for $n$($n\geq 4$)-dimensional, noncompact negative Einstein spaces for the Einstein flow. It turns out, with some non-positive requirement on the Weyl tensor of the background Einstein metric, the flow exists globally and the final attractor will be an Einstein space close to the background one. Technically, the mechanic behind lies in the non-decaying feature of (spatial) Weyl tensor. This phenomenon makes itself significantly different from the $n=3$ case in which the Weyl tensor vanishes.

\subsection{Main result}

Before the statement of our main result, we will introduce some notations. 

Throughout the paper, Greek indices $\alpha, \beta \cdots, \mu, \nu \cdots$ run over $0,\cdots,n$, and Latin indices $i,j, \cdots$ run over $1,\cdots, n$. On the spacetime manifold $(\mathcal M = \mathbb{R} \times M, \, \breve{g})$, the spacetime metric $\breve{g}$ in Gaussian normal coordinates (known as geodesic polar coordinates as well) takes the form of
\begin{equation}\label{metric-form-tau}
\breve{g}_{\mu\nu} = - \di t^2 + \tilde g_{ij} \di x^i \di x^j.
\end{equation}
The spatial metric $\tilde g_{ij}$ is the induced metric of $\breve g_{\mu \nu}$ on the spatial manifold \[ M_t := \{t\} \times M. \] Let
\beq\label{def-2nd}
\tilde k_{ij} :=-\frac{1}{2} \mathcal{L}_{\p_t} \tilde g_{ij}\eeq
be the second fundamental form. We define the normalized variables
\begin{equation}\label{rescale-metric}
g_{ij} =t^{-2} \tilde g_{ij},  \quad k_{ij}  =t^{-1} \tilde k_{ij}.
\end{equation}
Let us further decompose the second fundamental form $k_{ij}$ into the trace and traceless parts 
\beq\label{rescale-2}
 \Sigma_{ij} := k_{ij} - \frac{\tr_g k }{n} g_{ij}, \quad \eta := \frac{\tr_g k}{n} + 1,
\eeq
where $\tr_g k :=g^{ij} k_{ij} = t  \tilde g^{ij} \tilde k_{ij}$. The notation $\tr_g$ always refers to taking trace with respect to $g$. 

We denote $\nabla$ the connection corresponding to $g$, and $R_{ipjq}, \, R_{ij}, \, R$ the associated Riemann, Ricci curvature tensors and scalar curvature.

  The \emph{Weyl tensor} of a Riemannian (or pseudo-Riemannian ) manifold $(M, \, g)$ with dimension $n \geq 4$ is the Weyl part $W$ of its curvature tensor:
\[ R_{ikjl} = W_{ikjl} + \frac{1}{n-2} S \odot g + \frac{R}{2n(n-1)} g \odot g,  \] where $S_{ij} := R_{i j} - \frac{R}{n} g_{ij}$ is the traceless Ricci tensor.
In general, a Weyl tensor $W$ is a $(0,4)$-tensor that belongs to $S^2 \(\Ld^2 (T^\ast M) \)$. That is, \[ W_{ipjq}=W_{j q i p}, \quad W_{i p j q}=-W_{p i j q}=-W_{ipqj},\] and it satisfies \[ W_{[ipj]q} := \frac{1}{3} \( W_{i p j q} + W_{p j i q} + W_{j i p q} \) =0 \quad \text{and} \quad W_{i p j q} g^{p q}=0.\]

\bg{definition}[Non-positive Weyl tensor]\label{def-Non-positive-Weyl}
Let $(M, \, \gamma)$ be an $n$-dimensional Riemannian manifold with $n \geq 4$, and $W[\ga]$ its Weyl tensor. We call the Weyl tensor  $W[\ga]$ non-positive if for any symmetric $(0, 2)$-tensor $A_{ij} \in S^2(T^\ast M)$\footnote{Since Weyl tensors are always trace-free, it suffices to confine to symmetric, trace-free tensors.}with $A_{ij} \in L^2 (M, \gamma)$, 
\[ \int_M A_{i j} A_{p q}  \gamma^{i i^\prime} \gamma^{j j^\prime} \gamma^{p p^\prime} \gamma^{q q^\prime} W[\ga]_{i^\prime p^\prime j^\prime q^\prime} \, \di \mu_\gamma \leq 0.
\]
\ed{definition}

\bg{remark}
If $\ga$ is the hyperbolic metric, its Weyl tensor $W[\ga]$ vanishes. This provides a trivial example of Einstein metric with non-positive Weyl tensor.

\ed{remark}

The notation $x \lesssim y$ refers to $x \leq Cy$ for some \emph{universal} constant $C$, and $x \sim y$ means $x \lesssim y$ and $y \lesssim x$. We will also employ the notation $x\lesssim_{N, n} y$ to denote $x \leq C(N, n) y$ for some constant $C(N, n)$ that depends on the order of derivative $N$ and  the dimension $n$. Usually, we will drop the subscript in $\lesssim_{N, n}$ and simply denote it by $\lesssim$. $H_{k} (M, g)$ denotes the Sobolev norm with respect to $g$ on $M$. It is usually abbreviated as $H_k$. In particular, we use $\| \cdot \|$ to denote the $L^2(M,g)$ norm.

Now we are ready for the statement of the main theorem. 

\begin{theorem}\label{thm-global-existence-Ein}
Let $\ga$ be an Einstein metric on $M$ with Einstein constant $-(n-1)$, and $W[\ga]$ be the Weyl tensor of $(M,\,\ga)$. Suppose $W[\ga]$ is non-positive in the sense of Definition \ref{def-Non-positive-Weyl}.

Suppose $(M, \, g_0)$ is an $n(n\geq 4)$-dimensional smooth complete, non-compact Riemannian manifold with positive injective radius. 
Assume that $(M,  \, g_0, \,  k_0)$ is a rescaled data set for the vacuum Einstein equations and $(g_0, \, k_0)$ is close to $(\ga, \, - \ga)$. That is, for some fixed integer $N > \frac{n}{2}$, there is an $\varepsilon > 0$ such that
\begin{align} 
& \|g_0-\gamma\|^2_{H_{N+2}(M, \,g_0)} + \|  k_{0} + g_{0} \|^2_{H_{N+1}(M, \,g_0)} \nnb \\
& \qquad \qquad +   \| \tr_{g_0} k_{0} + n \|^2_{H_{N+2}(M, \, g_0)}  \leq \varepsilon^2. \label{intro-initial-data}
\end{align}
Then if $\varepsilon$ is small enough, there is a unique and global solution $(\mathcal{M}, \, \breve g)$ to the vacuum Einstein equations with $\breve g = -dt^2 + t^2 g$ for all $t \geq t_0$.

During the evolution of Einstein flow, we have the quantitative estimates
\beq\label{decay-E-gravity}
t^{1-\delta} \(   \| t\p_t g_{ij} \|_{H_{N+1}} +  \|R_{ij} + (n-1) g_{ij} \|_{H_N}  \)   \lesssim_{N, \, n} \varepsilon,
\eeq
and moreover, the solution $g$ remains close to the background Einstein metric $\ga$,
\beq\label{uni-E-metric}  
\|g_{ij} - \gamma_{ij} \|_{H_{N+2}}  \lesssim_{N, \, n} \varepsilon, 
\eeq
and
\beq\label{uni-E-Weyl}  
 \|W - W[\gamma] \|_{H_{N}}  \lesssim_{N, \, n} \varepsilon. 
 \eeq
Here $\delta \in (0, \,1/6)$ is a fixed constant. Therefore, as $t \rightarrow +\infty$, $g(t)$ tends to an Einstein metric $g_\infty$ whose Einstein constant is $-(n-1)$ and $g_\infty$ remains close to $\ga$.

\end{theorem}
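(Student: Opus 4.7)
The plan is to run a continuity/bootstrap argument in Gaussian normal gauge using the normalized variables introduced in \eqref{rescale-metric} and \eqref{rescale-2}. In the metric form \eqref{metric-form-tau} the Hamiltonian and momentum constraints propagate via the twice-contracted Bianchi identity, so it suffices to track the evolution of the triple $(g_{ij},\,\Sigma_{ij},\,\eta)$ together with the spatial Weyl tensor $W$ of $g$. Short-time existence in this gauge is available from the works of Andersson--Rendall and Fournodavlos--Luk cited in the introduction, and the task is to upgrade this to a global statement through a priori estimates. I would encode the expected dichotomy in the bootstrap assumptions: $\Sigma$ and $\eta$ decay like $\varepsilon t^{-1+\delta}$ in high Sobolev norms, while $\|g-\gamma\|_{H_{N+2}}$ and $\|W-W[\gamma]\|_{H_N}$ remain merely $\lesssim\varepsilon$. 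The latter two quantities cannot decay in dimension $n\geq 4$ because nearby noncompact negative Einstein metrics form a nontrivial moduli space.

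Directly from \eqref{def-2nd}--\eqref{rescale-2} one computes $\p_t g_{ij}=-2t^{-1}(\Sigma_{ij}+\eta g_{ij})$, so the decay of $\Sigma$ and $\eta$ converts to time-integrability of $\p_t g$, producing a limit $g_\infty$ with $\|g_\infty-\gamma\|_{H_{N+2}}\lesssim\varepsilon$. Passing to the limit in the evolution equation for $k_{ij}$ together with the vanishing of $\eta$ and $\Sigma$ at infinity will then confirm $\mathrm{Ric}(g_\infty)=-(n-1)g_\infty$, establishing \eqref{uni-E-metric}, \eqref{uni-E-Weyl}, and the attractor claim.

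The central analytic step is the energy estimate. I would build a hierarchy: (i) a Bel--Robinson-type energy for $W-W[\ga]$, viewed as a solution of a Maxwell-type first-order symmetric hyperbolic system obtained from the second Bianchi identity together with Gauss--Codazzi on each slice $M_t$; (ii) standard $H^{N+1}$-energies for $\Sigma$ and $H^{N+2}$-energies for $\eta$ obtained from the rescaled evolution equations for the second fundamental form; (iii) elliptic control of $g-\gamma$ at two higher derivatives from the Ricci deviation $R_{ij}+(n-1)g_{ij}$. Differentiating the energy for $\Sigma$ in $t$ and using the Weyl decomposition of $R_{ipjq}$ produces a ``curvature potential'' term proportional to $\int_{M_t}\Sigma^{ij}\Sigma^{pq}W_{ipjq}\,d\mu_g$. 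Replacing $g$ by $\ga$ and $W$ by $W[\ga]$ in this integral incurs only errors of order $\varepsilon\|\Sigma\|^2$ (by Sobolev applied to the bootstrap assumptions), and the remaining principal integral is exactly the quantity appearing in Definition \ref{def-Non-positive-Weyl}. The non-positivity hypothesis then gives it the right sign, so this term, though of zeroth order in derivatives and unable to be absorbed by integration in $t$, does not destroy the $t^{-1+\delta}$ decay of $\Sigma$.

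The main obstacle will be that $W-W[\ga]$ does not decay, so every coupling between $W$ and the decaying variables threatens a loss in powers of $t$. To control this, I would (a) commute covariant derivatives with $\p_t$ carefully so that at top order the non-decaying Weyl terms are always paired with $\Sigma$ or $\eta$ (which decay), never with themselves in a destabilizing combination; (b) exploit the electric/magnetic decomposition and the algebraic symmetries in Definition \ref{def-Non-positive-Weyl} so that the top-order energy for the Weyl part closes as a conservation law plus lower-order, decay-producing corrections; and (c) use the Gauss equation to tie the spatial Riemann curvature of $g$ to $W$ and to quadratic expressions in $\Sigma$ and $\eta$, so that the constraint-type identities link decaying and non-decaying sectors consistently. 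Feeding everything into a Gronwall argument on $[t_0,\infty)$ closes the bootstrap and yields \eqref{decay-E-gravity}; combined with the integrability observation above this produces \eqref{uni-E-metric} and \eqref{uni-E-Weyl}.
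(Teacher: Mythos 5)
Your proposal reproduces the paper's outer skeleton (Gaussian normal gauge, bootstrap with decaying $\Sigma,\eta$ versus merely bounded $g-\gamma$ and $W$, the sign of $\int\Sigma^{ij}\Sigma^{pq}W_{ipjq}$ via Definition \ref{def-Non-positive-Weyl} with an $\varepsilon\|\Sigma\|^2$ error from replacing $(g,W)$ by $(\gamma,W[\gamma])$, and extraction of the Einstein limit $g_\infty$ from time-integrability of $\p_t g$), but it misses the mechanisms that actually make the case $n\geq 4$ close, and one of its central devices is mislocated. First, a ``Bel--Robinson-type energy for $W-W[\gamma]$ as a Maxwell-type hyperbolic system'' is not available: in the paper the spacetime Weyl tensor is split into $(\Ew,\Hw,\Jw)$, and only the pair $(\Ew,\Hw)$ obeys a damped hyperbolic system \eqref{trans-EHw-Ew}--\eqref{trans-EHw-Hw}; the spatial Weyl part satisfies a transport equation with \emph{no} damping (\eqref{trans-HJw-Jw}, \eqref{eq-pt-W}), which is precisely why $W$ does not decay. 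The paper therefore controls $\|W\|$ only at zero order from the transport system and recovers $\|W\|_{H_N}$ through the \emph{elliptic} system \eqref{eq-div-W}--\eqref{eq-curl-W} coming from the spatial Bianchi identity (Lemma \ref{lemm-elliptic}, Proposition \ref{prop-W}); without this elliptic step the couplings $\nabla^k(\Sigma^{pq}W_{piqj})$ cannot be treated as effectively linear terms. Second, the curvature-potential term with the good sign does not arise from differentiating a $\Sigma$-energy: the first-order equation \eqref{eq-evolution-Si} couples $\Sigma$ to $\Ew$, not to $W$. In the paper the dangerous term $\int t\,\Sigma^{pq}W_{piqj}\Ew^{ij}$ sits in the $\Ew$-energy, and one must use \eqref{eq-evolution-Si} \emph{backwards}, replacing $\Ew$ by $\lie_{\p_t}(t\Sigma)+\text{l.o.q.}$ and integrating by parts in time, to generate the modified energy $\int t^2\Sigma\Sigma W$ to which the non-positivity hypothesis applies.

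The deeper gap is at higher order. In dimension $n\geq4$ the div--curl energy identity for the Maxwell-type system produces the cross terms weighted by $C_k^m(n-3)^{k-m}$ in Lemma \ref{lemma-div-curl} (they vanish only when $n=3$), and a direct estimate of these, or of the borderline couplings $t\|W\|_{H_N}\|\Gm\|_{H_{k-1}}\|\Gm\|_{H_k}$, gives bounds of size $\varepsilon^2\Lambda^{3/2}t^{2\delta}$ that do \emph{not} close the bootstrap (this is exactly the failure exhibited in \eqref{E:ee-direct-leading}). The paper's resolution is structural: substitute the $\Ew$-equation into those cross terms to convert them into time-boundary terms plus positive bulk terms $\sum_m C_k^m(n-3)^{k+1-m}\int t\|\nabla^{\mathring m}\Delta^{[m/2]}\Ew\|^2$ on the left of \eqref{E:id-energy-ho-EHw-1}, and then absorb the $W$-couplings by Cauchy--Schwarz into precisely these bulk terms \eqref{E:ee-regularity-solve}. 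Your items (a)--(c) (``pair non-decaying Weyl terms with decaying factors'', ``exploit the symmetries'') do not supply this mechanism, and without it the Gronwall argument you describe loses a power of $t$ at top order. Finally, two smaller points: the transport equation for $\eta$ only yields $\|\eta\|_{H_{N+1}}$ (Lemma \ref{prop-energy-estimate-eta-low}); the top order $\|\eta\|_{H_{N+2}}$ and $\|g-\gamma\|_{H_{N+2}}$ require the renormalization via the wave equation \eqref{E:wave-Sigma} (Proposition \ref{prop-energy-estimate-2nd}), and $\Sigma$ itself is estimated at higher order elliptically from the Codazzi system \eqref{Codazzi-div-k}--\eqref{div-curl-k-hat}, not by a ``standard $H^{N+1}$ energy'' from its evolution equation.
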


\begin{remark}
The decay estimates \eqref{decay-E-gravity} imply $g(t)$ tends to an Einstein metric $g_\infty$ as $t \rightarrow +\infty$. However, as presented in \eqref{uni-E-Weyl}, the perturbation $W-W[\ga]$ does not decay and hence the Weyl tensor $W$ fails to settle down to $W[\ga]$. It tells that the target Einstein metric $g_\infty$ is not necessarily identical to the original Einstein metric $\gamma$, although $g_\infty$ still lies in a small neighbourhood of $\gamma$ \eqref{uni-E-metric}. In principle, the non-decaying Weyl tensor is the main obstruction that preventing $g(t)$ from settling down to $\ga$. 
\end{remark}

\begin{remark}
Suppose $(M, \,\ga)$ is the (non-compact) hyperbolic space, then due to the result of Graham and Lee \cite{GRAHAM1991186}, the hyperbolic space ${\bf H^n}$ admits deformations which are Einstein of the same Einstein constant, are not isometric to $\ga$, but remain close to $\ga$. 
This agrees with our result in the case of hyperbolic background which claims the attractor $g_{\infty}$ is merely an Einstein deformation close to $\ga$. Remarkably, what happens for the non-compact hyperbolic metric is entirely different from the compact case.
We note that although the non-compact hyperbolic space is not an isolated point of the moduli space, the compact hyperbolic space is indeed an isolated point \cite[Proposition 3.4]{Koiso-78}. Therefore, if $(M, \, \ga)$ is the compact hyperbolic space, the attractor $g_{\infty}$ is exactly $\ga$.
\end{remark}

\bg{remark}
Our proof works (in fact simplifies) even when the spatial manifold $M$ is closed, and hence Theorem \ref{thm-global-existence-Ein} automatically holds in the compact case. 

Now suppose $(M,\, \ga)$ is a compact Einstein manifold with Einstein constant $\al <0$, then the associated Riemann tensor $R[\ga]_{ikjl}$ is decomposed as
\[ R[\ga]_{ikjl} = \frac{\al}{2 (n-1)} \ga \odot \ga + W[\ga]_{ikjl}. \]
By a B\"{o}chner formula in Koiso \cite[Page 428]{Koiso-78} (see also in \cite[Page 355--356]{Besse-Einstein} and \cite[Page 87]{Kroncke-15}), the non-positive condition for the Weyl tensor of $\(M, \, \ga \)$ indicates \[ \<\mathcal{A} h,\, h \> \geq - \al  \frac{ n-2}{n-1} \|h \|^2_{L^2(M, \ga)}, \quad h \in S^2(T^\ast M), \] where $\mathcal{A}$ is the Einstein operator of $\ga$.
As a result, $\ga$ is strictly stable and hence rigid. 
This suggests the target Einstein metric $g_\infty$ is identical to $\ga$ in the compact case. 
\ed{remark}

\subsection{Related works} 
The work in $1+3$ dimension \cite{W-Y-open-M} serves as a lead-up for the general $(1+n)$-dimensional case. 
For clarity, let us give an overview for the main idea in \cite{W-Y-open-M}. 
\begin{itemize}
\item The proof of \cite{W-Y-open-M} was built on the Gaussian normal coordinates. In particular, the Gaussian time coordinate helps to eliminate borderline terms arising from the non-constant lapse (in the energy argument for the Klein--Gordon field), which greatly simplifies the proof; 

\item The equation for $\eta$ \eqref{rescale-2} admits a structure of saving regularity and hence lower orders (except the top order) of $\eta$ can be estimated a priori. This further allows us to cast the Codazzi equations into an elliptic system for $\Si$ \eqref{rescale-2} so that elliptic estimates for $\Si$ follow; 

\item Working with the Bianchi equations coupled with geometric structure equations for the second fundamental form ($\Si$ and $\eta$), we are able to close the energy argument 
without referring to the top order of $\eta$. In this way, the main energy estimates are not interrupted by the bad estimate for the top order of $\eta$, since the top order energy of $\eta$ fails to decay due to the Klein--Gordon source.
\end{itemize}

In contrary to the $n=3$ case, a general $n$-dimensional, negative Einstein metric with $n \geq 4$ has a non-trivial moduli space. Therefore, we have no reason to expect the metric would flow back to the background Einstein metric $\ga$, since it is highly possible that the metric would be attracted to a different point in the moduli space. To capture the attractor precisely, Andersson--Moncrief \cite{A-M-11-cmc} introduced for the flowing metric $g(t)$ an associated shadow metric $\ga(t)$ which lies in the deformation space of $\ga$ (the background metric). In the CMCSH gauge, the vacuum Einstein equations are roughly regarded as a system of wave type equations for $g(t) - \ga(t)$. The analysis in \cite{A-M-11-cmc} showed that $g(t) - \ga(t)$ decayed with the decay rates depending on the lowest eigenvalue of Einstein operator of $\ga$ and hence $g(t)$ eventually tended to the limitation of $\ga(t)$. We note that the proof of \cite{A-M-11-cmc} were tailed in particular for the compact case for which fruitful results on the theory of infinitesimal Einstein deformations had been established \cite{Besse-Einstein}. An attempt to extend the stability result of \cite{A-M-11-cmc} to the noncompact case serves as the main motivation for the present work.

\subsection{Comments on the proof}\label{sec-comm-pf}


In this paper, we aim to give a proof that is independent of the theory of infinitesimal Einstein deformations, or analysis for the eigenvalues of Laplacian or Einstein operator (or some Poincar\'{e} type inequalities). To this end, we, as in \cite{W-Y-open-M}, take advantage of the inherent structures of Bianchi equations to derive decay estimates, so that the proof holds whether the spatial manifold is compact or not. 
However, compared to the $1+3$ case, the interactions between different components of spacetime Weyl tensor are more involved in dimension $n\geq 4$. Thus, more observations for the hidden geometric structures are needed in high dimensional Einstein equations. 
 
\subsubsection{ Framework of energy estimates for the $(1+n)$-dimensional Bianchi equations} 
We have introduced the rescaled $(g_{ij}, \, k_{ij})$ in \eqref{rescale-metric}. For notational convenience, we also introduce the rescaled spacetime metric 
\beq\label{def-rescale-spacetime}\bar g_{\mu \nu} =t^{-2} \breve{g}_{\mu \nu} = - \di \tau^2 + g_{ij} \di x^i \di x^j,  \quad \bar g^{\mu \nu}=t^{2} \breve{g}^{\mu \nu}, 
\eeq
where 
\begin{equation}\label{def-tau}
\tau := \ln t,
\end{equation}
is  the logarithmic time so that \[\p_{\tau} = t\p_t.\] 
Let $\Wb$ be the Weyl tensor of $(\mathcal M, \, \bar g)$, then \[ \Wb_{\mu \alpha \nu \beta}  = t^{-2}  \W_{\mu \alpha \nu \beta} \] where $\W_{\mu \alpha \nu \beta}$ is the Weyl tensor of $\( \mathcal{M}, \, \breve g_{\mu \nu} \)$.
We define the projection tensor (onto the spatial manifold $M$) \[ \breve h^{\mu}_{ \nu} : = \breve g^{\mu}_{ \nu} + \p_t^{\mu} (\p_t)_{\nu}, \quad \text{with} \,\, (\p_t)_\nu = \p_t^{\rho} \breve g_{\nu \rho}, \]
and the $1+n$ splitting for the rescaled spacetime Weyl tensor 
\ali{def-1+n-Weyl}{
\Ew_{ij} : ={}& 
\breve h^p_i \breve h^q_j  \Wb_{p \tau q \tau},\nnb \\
\Hw_{i j l} : ={}& \breve h^p_i \breve h^q_j \breve h^k_l \Wb_{p q k \tau}, \nnb\\
\Kw_{iljk} : = {}& \breve h^p_i \breve h^q_j \breve h^m_l \breve h^n_k \Wb_{p m q n}. 
}

We note that, when viewed as a $(0, 4)$-tensor on the Riemannian manifold $(M, \, g)$, $\Kw_{iljk}$ fails to be a Weyl tensor. Denote $\Jw_{iljk}$ the Weyl part of $\Kw_{iljk}$ and $W_{ipjq}$ the Weyl tensor of $(M, \, g)$. Then $W$ and $\Jw$ are related by \eqref{E:relat-W-Jw}. Roughly speaking,  by the Gauss--Codazzi equations \eqref{Gauss-Ricci-hat-k}, \eqref{Codazzi-div-k}--\eqref{div-curl-k-hat}, $\Ew$, $\Hw$ and $W$ determine the full geometry on $(M, g)$, namely, the Riemann curvature and the second fundamental form. Therefore, the main body of this paper is devoted to the estimates for $\Ew$, $\Hw$ and $W$.

The $(1+n)$-dimensional Bianchi equations (plugged with the vacuum Einstein equations) are decomposed into the following two systems: the system of $\Ew$ and $\Hw$ (referring to Lemma \ref{lem-trans-Hw-Ew}),
\begin{subequations}
 \begin{align}
 & \lie_{\dtau} \Ew_{i  j} +(n-2)\Ew_{ij} + \nabla^p \Hw_{p j i} =\Si^{p q} W_{p i q j} + \text{l.o.q.},\label{sym-EH-E} \\
  &\lie_{\dtau} \Hw_{p i j} + \Hw_{p i j} +   \nabla_p \Ew_{i  j } - \nabla_i  \Ew_{p  j } = \text{l.o.q.}, \label{sym-EH-H}
\end{align}
\end{subequations}
and the system of $\Hw$ and $\Jw$ (referring to Lemma \ref{lem-trans-Hw-Jw}),
\begin{subequations}
\begin{align}
 & \lie_{\dtau} \Hw_{p i j} + (n-1) \Hw_{p i j} +  \frac{n-2}{n-3} \nabla^l \Jw_{l j p i} =\text{l.o.q.}, \label{sym-HJ-H} \\
&  \lie_{\dtau} \Jw_{ipjq} + \nabla_i \Hw_{j q p} - \nabla_p \Hw_{j q i} -  \frac{1}{n-2}(\dive \Hw \odot g)_{ipjq} =\text{l.o.q.}, \label{sym-HJ-J}
\end{align} 
\end{subequations}
where l.o.q. denotes lower--order quadratic terms.

We remark that in dimension $n=3$, the Weyl tensors, $\Jw$ and $W$, vanish and hence \eqref{sym-HJ-H}--\eqref{sym-HJ-J} is redundant. In fact, taking the constraint \eqref{eq-div-Jw} into account, we are able to reduce the system \eqref{sym-HJ-H}--\eqref{sym-HJ-J} to \eqref{sym-EH-H}. Moreover, performing Hodge dual on \eqref{sym-EH-H}, the Bianchi system \eqref{sym-EH-E}--\eqref{sym-EH-H} is recast into a system of Maxwell type in  $1+3$ dimension,
\alis{
\lie_{\dtau} E - \curl H + E={}& \text{l.o.q.}, \\
\lie_{\dtau} H + \curl E + H={}&  \text{l.o.q.}.
}
This system manifests itself a first order symmetric, hyperbolic system. The corresponding energy estimates on a spacetime foliated by Riemannian manifolds with negative curvatures were carried out in \cite{ A-M-04, Wang-J-EKG-2019} and later \cite{W-Y-open-M}.

In general dimension, we take \eqref{sym-EH-E}--\eqref{sym-EH-H} for instance to illustrate the strategy of energy estimates. Let us ignore the linear lower order (in derivative) terms for a moment, and consider the simplified system 
\begin{subequations} 
\begin{align}
  \lie_{\dtau} \Ew_{i j} + \n^p \Hw_{p j i} ={}& \text{l.o.q.} ,  \label{sym-symbolic-1} \\
  \lie_{\dtau} \Hw_{p q i} +2  \n_{[p} \Ew_{q] i} ={}& \text{l.o.q.},   \label{sym-symbolic-2}
\end{align}
\end{subequations}
where the bracket on indices refers to anti-symmetrization\footnote{see also notations in the subsection \ref{sec-bracket}.}: \[ 2 \n_{[p} \Ew_{q] i} :=  \n_{p} \Ew_{q i} - \n_{q} \Ew_{p i}, \] and $\Ew$ is a symmetric tensor, and $\Hw$ satisfies \[ \Hw_{[p q] i} =0, \quad \Hw_{[ijk] } = 0. \] In fact, the system \eqref{sym-symbolic-1}--\eqref{sym-symbolic-2} is essentially equivalent to a first order hyperbolic system, which was proved in \cite{LW2021a, LW2021b} by means of introducing some auxiliary variables. Since \eqref{sym-symbolic-1}--\eqref{sym-symbolic-2} are high dimensional analogue of Maxwell equations, we will refer this system  as a \emph{hyperbolic system of Maxwell type}.

In the following, we propose a straightforward energy method for the system \eqref{sym-symbolic-1}--\eqref{sym-symbolic-2}.
After multiplying $2E$ and $H$ on \eqref{sym-symbolic-1} and \eqref{sym-symbolic-2} respectively, as the case for a first order hyperbolic system, the summation of spatial derivative terms takes a divergence form,
\alis{
 & 2 \n^p \Hw_{p j i} \cdot \Ew^{i j}  +   2 \n_{[p} \Ew_{q] l} \cdot \Hw^{p q l} 
= 2 \nabla^p    \left(  \Hw_{p j i}   \cdot \Ew^{i j}  \right),
 } which vanishes after integration on $(M, \, g)$, due to the density theorems (see Corollary \ref{coro-density}). Then we are able to derive a zero--order energy identity for $\Ew$ and $\Hw$.
The higher--derivative analogue of the above identity is given by (see Lemma \ref{lemma-div-curl} for more details),
\ali{identity-intro}{
&   \nabla^{\mathring{k}} \Delta^{ [\frac{k}{2}] } \n^p \Hw_{p j i} \cdot \nabla^{\mathring{k}} \Delta^{ [\frac{k}{2}] } \Ew^{i j}  + \nabla^{\mathring{k}} \Delta^{ [\frac{k}{2}] }  \n_p \Ew_{ i j}  \cdot \nabla^{\mathring{k}} \Delta^{ [\frac{k}{2}] } \Hw^{p i j}  \nnb\\
={}& - \sum_{0 \leq m <k} C_k^m (n-3)^{k-m} \nabla^{\mathring{m}} \Delta^{ [\frac{m}{2}] } \n^p \Hw_{p j i} \cdot \nabla^{\mathring{m}} \Delta^{ [\frac{m}{2}] } \Ew^{i j} \nnb \\  
& + \text{divergence forms} + \text{lower--order cubic terms},
}
where $C_k^m$ are the combinatorial numbers and $\mathring{k}$ is defined as in \eqref{def-circ-l}. 
We remark that the quadratic terms on the right side of (the second line of) \eqref{E:identity-intro} vanish exactly when $n=3$ and thus one encounters no additional difficulty at this stage in $n=3$ case. However, in dimension $n \geq 4$, a straightforward estimate for these quadratic terms offers inadequate decay rates to close the energy argument. For this issue, we have to make use of the structure of the original equations \eqref{sym-symbolic-1}--\eqref{sym-symbolic-2}. The idea lies in the observation that using \eqref{sym-symbolic-1} to replace $\n^p \Hw_{p j i}$ in those quadratic terms by \[ -  \lie_{\dtau} \Ew_{i j} + \text{l.o.t.}, \] intuitively, we are able to transform these quadratic terms into boundary terms such as $\dtau |\nabla^{\mathring{k}} \Delta^{ [\frac{k}{2}] } \Ew^{i j} |^2 + \cdots$ It turns out, up to some lower-order cubic terms, these quadratic terms are then recast into boundary terms plus quadratic terms
\ali{id-good-sign}{
& - 4 t  C_k^m (n-3)^{k-m} \nabla^{\mathring{m}} \Delta^{ [\frac{m}{2}] } \n^p \Hw_{p (i j)} \cdot \nabla^{\mathring{m}} \Delta^{ [\frac{m}{2}] } \Ew^{i j} \nnb\\
={}& \p_t \( 2t^2 C_k^m (n-3)^{k-m} |\nabla^{\mathring{m}} \Delta^{ [\frac{m}{2}] } \Ew|^2 \) \nnb \\
&+ 4 t C_k^m (n-3)^{k+1-m} |\nabla^{\mathring{m}} \Delta^{ [\frac{m}{2}] } \Ew|^2 + \cdots  
} and they are of favourable signs to ensure a high--order energy inequality, see \eqref{E:id-good-sign} and \eqref{E:id-energy-ho-EHw-1}. 
In this way, we establish the main framework of energy estimates for the system \eqref{sym-symbolic-1}--\eqref{sym-symbolic-2}.

\subsubsection{Decay mechanics and nonlinear coupling structures} 
Working with the Bianchi equations, the decay rate for each variable naturally comes from the linear structure. 
Actually, in the proof leading to the main theorem, we will see that the non-decaying feature of the Weyl tensor is reflected on the linear structure of Bianchi equations. More specifically, because of the linear terms $(n-2)\Ew_{ij}$ and $\Hw_{p j}$ that take favourable signs in \eqref{sym-EH-E}--\eqref{sym-EH-H}, we expect to prove almost $t^{-1}$ decay for $\|\Ew\|_{H_N}$ and $\|\Hw\|_{H_N}$; On the contrary, since there is no linear term $\Jw$ with favourable sign in the dynamic equation of $\Jw$ \eqref{sym-HJ-J}, we only derive uniform bound for $\|\Jw\|$ from the transport system \eqref{sym-HJ-H}--\eqref{sym-HJ-J}. In other words, neither $\Jw$ or the spatial Weyl tensor $W$ decays which will definitely give rise to difficulties in the energy estimates. 

Let us proceed to more details of the energy estimates. We can always begin with the system \eqref{sym-HJ-H}--\eqref{sym-HJ-J} to derive uniform bound for the zero--order energy $\|\Jw\|$ (and hence $\|W\|$), since all of the other variables decay better. After that, to obtain decay estimates for the zero--order energies of $\|\Ew\|$ and $\|\Hw\|$, we note that the nonlinear coupling $\Si^{p q} W_{p i q j}$ in \eqref{sym-EH-E}  would be subtle for it involves the non-decaying $W$. As a remark, in the case of $1+3$ dimension, the spatial Weyl tensor $W$ vanishes and thus this kind of dangerous coupling does not occur.

We now focus on handling the leading nonlinear term $\Si^{p q} W_{p i q j}$ in \eqref{sym-EH-E}, for which the interaction between the Bianchi equation \eqref{sym-EH-E} and the geometric structure equation \eqref{eq-evolution-Si} comes into play. Particularly, we have to take the relation between the Weyl component $\Ew$ and the second fundamental form $\Si$ into account. In practice, after multiplying $t \Ew$ on \eqref{sym-EH-E}, the leading term becomes $t \Si^{p q} W_{p i q j} \Ew^{ij}$.
Then with the help of \eqref{eq-evolution-Si} \[  \lie_{\p_t} ( t \Si ) = \lie_{\dtau} \Si + \Si = \Ew +\text{l.o.q.}, \] we replace  $t \Si^{p q} W_{p i q j} \Ew^{ij}$ by $t \Si^{p q} W_{p i q j} \( \lie_{\p_t} ( t \Si ) + \text{l.o.q.}\)$ so that we can further manipulate the principle part as follows
\alis{
 & \int^t_{t_0} \int_{M} 2 t \Si^{p q} W_{p i q j} \lie_{\p_t} ( t \Si )^{ij}  \di \mu_g  \, \di t \\
 ={}& \int^t_{t_0} \p_t \( \int_{M}  t^2 \Si^{p q}   \Si^{i j}  W_{ p i q j} \, \di \mu_g \) \di t  + \text{l.o.t.}.
 }
 After integration by parts, this boundary term gives rise to an additional term 
 \beq\label{modified term} 
 \int_{M}  t^2 \Si^{p q}   \Si^{i j}  W_{ p i q j}\, \di \mu_g
 \eeq in the energy.
It motivates us to introduce the non-positive condition for the background Weyl tensor (see Definition \ref{def-Non-positive-Weyl}) so that up to lower order terms, \eqref{modified term} admits a favourable sign. One can refer to subsection \ref{sec-ee-EH-0} for more details. 
Once the positivity of energy is addressed, the decay estimates (almost $t^{-1}$) for the zero--order energies of $\Ew, \, \Hw$ follows easily. 

\subsubsection{Elliptic estimates for the spatial Weyl tensor $W$}  The argument for the zero--order energy estimates fails for the higher--order case, as we can expect that the higher--order version $\n^k \( \Si^{p q} W_{ p i q j} \)$ ($k\geq 1$) yields a large number of nonlinear terms with inadequate decay rates. Therefore, instead of sticking to the transport system \eqref{sym-HJ-H}--\eqref{sym-HJ-J}, we turn to an elliptic system for $W$ (see Lemma \ref{lem-div-E-curl}), which is originated from the $n$-dimensional (spatial) Bianchi identities $\n_{[i} R_{pj]q}=0$.  The elliptic estimates help to reduce the high--order bound $\|W\|_{H_N}$ to $\|W\|$, up to lower--order terms, and we know that $\|W\|$  has been bounded in the preceding step. In other words, using the elliptic estimates makes $\n^k \( \Si^{p q} W_{ p i q j} \)$ ($k\geq 1$) essentially linear terms. This procedure of linearization enables us to close the higher--order energy argument for $(\Ew,\Hw)$ by induction.

\subsection{Outline of the paper}

The paper is organized as follows. In Section  \ref{preliminaries}, we introduce some relevant notations, and geometric structure equations and Einstein equations. In Section \ref{sec-Main-eq}, we derive the main equations, including a hyperbolic system of Maxwell type and elliptic systems. Section \ref{sec-Energy-estimates} is devoted to establishing the energy estimates. In the end, we collect the local existence theorem, the density theorem and some geometric identities in the appendix.  

{\bf Acknowledgement} J.W. thanks Wei Yuan a lot for helpful suggestions. This project is supported by NSFC (Grant No. 12271450).

\section{Preliminary}\label{preliminaries}

\subsection{Geometric notations}\label{subsec-notation}

On the $(1+n)$-dimensional spacetime $(\M,  \, \breve{g})$, the Lorentz metric $\breve{g}$ in Gaussian normal coordinates (i.e. geodesic polar coordinates) takes  the form of \eqref{metric-form-tau} and the second fundamental form is defined by \eqref{def-2nd}. We let $\D$ and $\ti \n$ be the covariant derivatives with respect to the spacetime metric $\breve{g}_{\mu \nu}$ and the spatial metric $\ti g_{ij}$ respectively.
 Relative to a frame $\{e_i\}_{i=1}^n$ that is tangent to $M_t := \{t\} \times M$, we have the formulae for connection
\ali{eq-connection}{
&\D_{\p_t} \p_t= 0, & \D_{i} e_j = \ti \n_i e_j - \ti k_{i j} \p_t,  \nnb \\
& \D_{i} \p_t = - \ti k_{i}^{j} e_j, & \breve D_{\p_t} e_i = \ti \n_{\p_t} e_i,  
}
where $\ti \n_{\p_t} e_i$ is the projection of $\breve D_{\p_t} e_i$ onto $M_t$.

Let us recall the rescaled variables defined in \eqref{rescale-metric}--\eqref{rescale-2}. In particular, \[ g_{ij}=t^{-2} \tilde g_{ij}\] is the rescaled metric, and $\nabla$ is the corresponding connection. Then
\[g^{ij}=t^{2} \tilde g^{ij}, \quad   \di \mu_g =t^{-n} \di \mu_{\tilde g},\]
and the rescaled Riemann, Ricci and scalar curvatures are related to the original ones by
\[
 R_{imjn} = t^{-2} \tilde R_{imjn}, \quad  R_{ij } = \tilde R_{ij}, \quad   R = t^2 \tilde R.
\]
The notations $R_{imjn}, \, \tilde R_{imjn}$ denote the Riemann tensors with respect to $g$ and $\tilde g$ respectively. 

The rescaled spacetime metric $\bar g_{\mu \nu}$ is defined in  \eqref{def-rescale-spacetime}.
Its associated Weyl tensor is given by 
\[ \Wb_{\mu \alpha \nu \beta}  = t^{-2}  \W_{\mu \alpha \nu \beta} \]
where $\W_{\mu \alpha \nu \beta}$ is the Weyl tensor of $\(\mathcal{M}, \, \breve g \)$. For notational convenience, we define 
\[
\bar R_{\mu \alpha \nu \beta}  := t^{-2}  \Rmb_{\mu \alpha \nu \beta}, \quad \bar R_{\mu  \nu } := \Rmb_{\mu  \nu}
\]
where $\Rmb_{\mu \alpha \nu \beta}$, $\Rmb_{\mu  \nu}$ denote the Riemann and Ricci tensors with respect to $\breve g$.

The variables $\Ew$, $\Hw$ and $\Kw$ are defined in \eqref{E:def-1+n-Weyl} as the $1+n$ splitting for the rescaled spacetime Weyl tensor $\Wb$.
Noting that
 \beq\label{trace-Kw-Ew}
g^{p q} \Kw_{i p j q} = \Ew_{i j}, \quad g^{i j} \Ew_{i j}=0,
\eeq
then we define the Weyl part of $\Kw$ by
 \beq\label{def-Jw} 
 \Jw_{imjn} := \Kw_{imjn} - \frac{1}{n-2} \(\Ew \odot g \)_{imjn},
 \eeq
where $\odot$ is the \emph{Kulkarni-Nomizu product}  
\begin{equation*}
( \xi \odot \zeta )_{imjn} := \xi_{ij} \zeta_{mn} - \xi_{jm} \zeta_{in}  + \zeta_{ij} \xi_{mn} - \zeta_{jm} \xi_{in},
\end{equation*}
for any symmetric $(0, 2)$-tensors  $\xi,\, \zeta$.
By virtue of their definitions, $\Ew,\,\Hw,\,\Jw$ are all $M$-tensors (referring to \ref{def-M-tensor}) and they can be regarded as tensor fields on the spatial manifold $(M, g_{ij})$. More properties of $\Ew,\,\Hw,\,\Jw$ are stated in the following proposition.
\bg{proposition}\label{prop-Weyl}
We have the following properties for $\Ew$, $\Hw$ and $\Jw$:
\begin{itemize}
\item $\Ew_{ij}$ is  symmetric and trace-free, \[\Ew_{i j} = \Ew_{j i}, \quad \Ew_{ij} g^{ij}=0. \]

\item $\Hw_{ijl}$ is anti-symmetric in the first two indices, trace-free, and satisfies an algebraic identity, \[ \Hw_{ijl}=-\Hw_{jil}, \quad \Hw_{i p q} g^{pq}=0, \quad \Hw_{[ijl]}=0. \]

\item $\Jw_{ipjq}$ is a Weyl tensor which satisfies \[\Jw_{ipjq}=-\Jw_{pijq}=\Jw_{piqj}, \quad \Jw_{ipjq}=\Jw_{jqip}, \quad \Jw_{ipjq} g^{p q} =0, \quad \Jw_{[ipj]q} = 0.\]
\end{itemize}
\ed{proposition}

\bpf
These properties follow from the feature of the rescaled spacetime Weyl tensor $\Wb$ and \eqref{trace-Kw-Ew}--\eqref{def-Jw}.
\epf

To do the $1+n$ splitting for Bianchi equations, we need the following calculations.
\begin{proposition}\label{prop-decomp-Bianchi}
We have the following identities,
\begin{align}
\D_{\p_t} \W_{i j p t} ={} &  \lie_{\dtau} \Hw_{i j p} + \Hw_{i j p} + k_i^l \Hw_{l j p} + k_j^l \Hw_{i l p}    + k_p^l  \Hw_{i j l},  \label{id-DT-W-T} \\
\D_{\p_t} \W_{i p q j} ={}& t  \( \lie_{\dtau} \Kw_{i p q j}  + 2  \Kw_{i p q j} \) \nnb \\
&+ t \(  k_{i}^l \Kw_{l p q j} + k_{p}^l \Kw_{i l q j} + k_{q}^l \Kw_{i p l j} + k_{j}^l \Kw_{i p q l} \), \label{id-DT-W-p} \\
\D_{\p_t} \W_{i t j t}  ={}& t^{-1}  \( \lie_{\dtau} \Ew_{i j} +  k_i^p \Ew_{p j} + k_j^p \Ew_{i p }\),  \label{id-DT-W-TT} \\
 \D_p \W_{q t i j} = {}&  t \( \nabla_p \Hw_{i j q} + k_{p}^l \Kw_{q l i j} - k_{p i} \Ew_{q j} + k_{p j} \Ew_{q i}\),  \label{id-Dp-W-T} \\
\D_p \W_{i m j n}  ={}& t^2\( \nabla_p \Kw_{i m j n} - k_{p i} \Hw_{j n m }  + k_{p m} \Hw_{ j n i} \) \nnb \\
& + t^2 \( - k_{p j} \Hw_{i m n} + k_{p n} \Hw_{i m j}\),  \label{id-Dp-W-p} \\
\D_p \W_{i t j t} = {}& \nabla_p \Ew_{i j} +  k_{p}{}^{\!q} \Hw_{i q j} + k_{p}{}^{\!q} \Hw_{j q i }. \label{id-Dp-W-TT} 
\end{align}
Note that, $\nabla$ is the connection associated to the rescaled spatial metric $g_{ij}$.
\end{proposition}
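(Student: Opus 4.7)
The plan is to establish the six identities by direct computation, unpacking each $\D \W$ via Leibniz and the connection formulas \eqref{E:eq-connection}, and then re-expressing the resulting raw components in terms of the rescaled splitting $(\Ew, \Hw, \Kw)$.

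The first preparatory step is to translate between the original spacetime components $\W_{\mu\alpha\nu\beta}$ and the rescaled splitting. Because $\bar g = t^{-2}\breve g$ is a conformal rescaling, the $(0,4)$ Weyl tensor transforms as $\Wb_{\mu\alpha\nu\beta} = t^{-2}\W_{\mu\alpha\nu\beta}$. Combining this with $\p_\tau = t\p_t$ and the definitions in \eqref{E:def-1+n-Weyl} yields the dictionary
\begin{align*}
\W_{itjt} = \Ew_{ij}, \qquad \W_{ijlt} = t\,\Hw_{ijl}, \qquad \W_{ipjq} = t^2 \Kw_{ipjq},
\end{align*}
with all other relevant orderings recovered from the Weyl symmetries $\W_{\mu\nu\alpha\beta} = \W_{\alpha\beta\mu\nu} = -\W_{\nu\mu\alpha\beta}$. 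Alongside this, $\tilde k_i^{\,j} = t^{-1} k_i^{\,j}$ by \eqref{rescale-metric}, and since the conformal factor $t^{-2}$ depends only on the time coordinate, the spatial Christoffel symbols of $\tilde g$ and $g$ agree; consequently $\tilde\n_p = \n_p$ on any purely spatial tensor when the derivative direction is spatial.

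With the dictionary in hand, the six identities reduce to parallel calculations. For \eqref{id-DT-W-TT}, \eqref{id-DT-W-T}, \eqref{id-DT-W-p}, the plan is to apply $\D_{\p_t}$ via Leibniz on the four slots of $\W(\cdots)$; by \eqref{E:eq-connection} one has $\D_{\p_t}\p_t = 0$ while $\D_{\p_t} e_i$ is tangential, contributing Christoffel-type terms that combine with $\p_t$ of $\W(\cdots)$ to assemble into $\lie_{\p_t}$ of the corresponding splitting component. Converting $\lie_{\p_t} = t^{-1}\lie_{\dtau}$ and $\tilde k_i^{\,j} = t^{-1} k_i^{\,j}$ then produces the stated $t$-prefactors, while the linear shifts $+\Hw$ and $+2\Kw$ on the right-hand sides of \eqref{id-DT-W-T}--\eqref{id-DT-W-p} originate from differentiating the explicit $t$-factors in $\W_{ijlt} = t\,\Hw_{ijl}$ and $\W_{ipjq} = t^2\Kw_{ipjq}$. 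For \eqref{id-Dp-W-TT}, \eqref{id-Dp-W-T}, \eqref{id-Dp-W-p}, the Leibniz expansion is analogous, but now $\D_p e_i = \tilde\n_p e_i - \tilde k_{pi}\p_t$ and $\D_p\p_t = -\tilde k_p^{\,l} e_l$ introduce both a tangential piece --- which assembles into $\tilde\n_p = \n_p$ of the corresponding splitting component --- and a normal-direction correction that, upon re-identification through the dictionary, generates precisely the quadratic $k$-terms on the right-hand sides.

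The main obstacle is purely bookkeeping: powers of $t$ enter from three independent sources --- the conformal rescaling $\Wb = t^{-2}\W$, the relation $\tilde k = tk$ together with $\tilde g$- versus $g$-raising of indices, and the change of time variable $\p_\tau = t\p_t$ --- and they must balance consistently across each identity. A useful sanity check is to verify that both sides carry the same net $t$-weight before collecting terms, and that the algebraic symmetries of $\Ew, \Hw, \Kw$ recorded in Proposition \ref{prop-Weyl} are respected by every summand. With these safeguards in place, each of the six identities reduces to a few lines of direct calculation.
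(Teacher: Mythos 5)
Your proposal is correct and follows essentially the same route as the paper: Leibniz expansion of $\D_{\p_t}$ and $\D_p$ via the connection formulas \eqref{E:eq-connection}, combined with the dictionary $\W_{itjt}=\Ew_{ij}$, $\W_{ijlt}=t\,\Hw_{ijl}$, $\W_{ipjq}=t^2\Kw_{ipjq}$ and the conversions $\tilde k_i^{\,j}=t^{-1}k_i^{\,j}$, $\lie_{\p_t}=t^{-1}\lie_{\dtau}$, $\tilde\n_p=\n_p$, with the linear shifts $+\Hw$ and $+2\Kw$ arising from differentiating the explicit $t$-powers. The only point your sketch leaves implicit, which the paper spells out, is that the projected spacetime Lie derivative coincides with the Lie derivative of the projected component because $[\p_t,e_i]=\tilde\n_{\p_t}e_i-\tilde\n_i\p_t$ is tangential to $M_t$.
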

The proof is collected in Appendix \ref{sec-proof-conn}.

\subsection{Lorenzian geometric equations}\label{sec-einstein-eq}
Before presenting the Einstein equations, we recall \eqref{rescale-metric}--\eqref{rescale-2} for the definitions of $g_{ij}$, $\Si_{ij}$ and $\eta$.
In the Gaussian normal gauge, there are the following transport equations
\begin{subequations}
\begin{align}
\mathcal{L}_{\dtau} g_{ij} &=-2 \eta g_{ij} - 2 \Sigma_{ij},  \label{eq-evolution-g}  \\
\p_{\tau} \eta + \eta &= \eta^2 + \frac{1}{n} |\Sigma|^2, \label{eq-evolution-eta}  \\
\mathcal{L}_{\dtau} \Sigma_{ij} + \Sigma_{ij} &=  \Ew_{i  j } - \Sigma_{ip} \Sigma_{j}^p -  \frac{1}{n}   |\Sigma|^2 g_{ij}, \label{eq-evolution-Si}
\end{align}
\end{subequations}
where the vacuum Einstein equations $\breve R_{\mu \nu} = 0$ are employed in \eqref{eq-evolution-eta}--\eqref{eq-evolution-Si}.
We have as well the Gauss--Codazzi equations 
\alis{
R_{imjn}  ={}& -  \frac{1}{2} ( k \odot k )_{imjn} + \bar R_{imjn}, \\
\nabla_i k_{jm} - {}& \nabla_j k_{im} =-\bar R_{ijm \tau},
} 
which, combined with the vacuum Einstein equations, read in terms of the rescaled variables $\eta,\, \Si,\,\Ew, \, \Hw$ and $\Jw$ as below
\begin{subequations}
\begin{align}
R_{imjn} =& -  \frac{1}{2} ( g \odot g )_{imjn} + \Jw_{imjn} - \frac{1}{n-2}  \Ew \odot g  \nnb \\
& +  (1-\eta) \Si \odot g - \frac{1}{2} \eta (\eta -2) g \odot g - \frac{1}{2} \Si \odot \Si, \label{Gauss-Riem-hat-k} \\
\nabla_i \Si_{jm} -\nabla_j \Si_{im} & + \nabla_i \eta g_{jm} - \nabla_j \eta g_{i m} =-\Hw_{ijm}. \label{Codazzi-curl-k}
\end{align}
\end{subequations}
Taking contraction on \eqref{Gauss-Riem-hat-k} leads to
\begin{subequations}
\begin{align}
R_{ij} +  (n-1) g_{ij} &=  \Ew_{ij} + (n-1) ( 2 \eta - \eta^2 ) g_{ij} \nnb \\
& \quad + \Sigma_{ip} \Sigma_{j}^p + (n-2) ( 1 - \eta ) \Sigma_{ij},\label{Gauss-Ricci-hat-k} \\
R +  n(n-1) &= 2 n(n-1)\eta - n(n-1) \eta^2  + \Sigma_{ij} \Sigma^{ij}. \label{Gauss-Ricci-trace-hat -k} 
\end{align}
\end{subequations}
The Codazzi equation \eqref{Codazzi-curl-k} yields an elliptic system for $\Si$ (with $\n \eta$ regarded as a source term)
\begin{subequations}
\begin{align}
\n^j \Sigma_{i j} &= (n-1) \nabla_i \eta,  \label{Codazzi-div-k}  \\
\n_i\Sigma_{j l} - \n_j \Si_{il} &= - \nabla_i \eta g_{jl} + \n_j \eta g_{i l} - \Hw_{ijl}.  \label{div-curl-k-hat}
\end{align}
\end{subequations}
In addition, using \eqref{eq-evolution-Si} and \eqref{Gauss-Ricci-hat-k}, we obtain the following wave equation for $\Si$,
\ali{wave-Sigma}{
\dtau^2 \Sigma_{ij} - \Delta \Sigma_{ij} & =  \Delta \eta g_{i j} -n \nabla_i \nabla_j \eta - (n-1) \dtau \Sigma_{ij} + 2 \Sigma_{ij} + \Jw \ast \Si \nnb \\
& + (\dtau \Sigma, \Sigma, \eta) * (\Sigma, \eta)  +  (\Sigma, \eta) * (\Sigma, \eta) * (\Sigma, \eta).
}
The notation $*$ is defined in the subsection \ref{def-contraction}.

Denote $W_{imjn}$ the Weyl part of $R_{imjn}$, that is,
\[ 
R_{imjn} =  W_{imjn} + \frac{1}{n-2} Ric \odot g - \frac{R}{2(n-1)(n-2)} g\odot g.
\]
Then in terms of $W_{imjn}$, \eqref{Gauss-Riem-hat-k} becomes
\begin{align}
R_{imjn} =& -  \frac{1}{2} ( g \odot g )_{imjn} + W_{imjn} + \frac{1}{n-2}  \Ew \odot g  - \frac{1}{2} \eta (\eta -2) g \odot g \nnb \\
& +  (1-\eta) \Si \odot g  + \frac{1}{ n-2} (\Si \cdot \Si)\odot g - \frac{|\Si|^2}{2(n-1)(n-2)} g\odot g, \label{eq-Riem-W} 
\end{align}
and therefore $W_{imjn}$ and $\Jw_{imjn}$ are related as  follows,
\ali{relat-W-Jw}{
W_{imjn} ={}&   \Jw_{imjn}  - \frac{2}{n-2}  \Ew \odot g  - \frac{1}{2} \Si \odot \Si   \nnb \\
& - \frac{1}{ n-2} (\Si \cdot \Si)\odot g + \frac{|\Si|^2}{2(n-1)(n-2)} g\odot g,
}
where $(\Si \cdot \Si)_{i j} :=\Si_{i }^{l}  \Si_{j l}$.

\subsection{Sobolev norms}\label{sec-Sobolev}

For any $(p, q)$-tensor $\Psi \in T^p_q(M)$, we define  \[|\Psi|_g^2 : =  g^{i_1 j_1} \cdots g^{i_q j_q} g_{i^\prime_1 j^\prime_1} \cdots g_{i^\prime_p j^\prime_p}  \Psi_{i_1\cdots i_q}^{i^\prime_1 \cdots i^\prime_p} \Psi_{j_1 \cdots j_q}^{j^\prime_1 \cdots j^\prime_p}. \] 
In what follows, we also use $|\Psi|$ to denote $|\Psi|_g$ for simplicity. 

Let $H^p_k(M)$ be the Sobolev space of tensors with respect to the norm \[\|\Psi\|_{H^p_k} = \sum_{j=0}^k \left( \int_M |\nabla^j \Psi|^p \di \mu_g \right)^{\frac{1}{p}}. \]
Let $H_{0,k}^p(M)$ be the closure of the space of smooth tensors with compact support in $M$. 
Let $m$ be an integer and denote by $C_B^m(M)$ the space of functions of class $C^m$ for which the norm $$\|\Psi\|_{C^m} = \sum_{j=0}^m \sup_{x \in M} |\nabla^j \Psi (x)|$$ is finite. 


We recall the following Sobolev inequalities \cite{Hebey}. 
\begin{proposition}\label{prop-Sobolev}
Let $(M,\, g)$ be a smooth, complete Riemannian $n$-manifold with Ricci curvature bounded from below. Assume that for any $x \in M$, 
\beq\label{volum-non-collapsing}
\text{Vol}_g (B_x(1)) >\kappa,
\eeq
 where $\kappa>0$ is a positive constant, and $\text{Vol}_g (B_x(1))$ stands for the volume of unit ball centred at $x$, $B_x(1)$, with respect to $g$. 

Let $k >m$ be two integers.

\begin{itemize}
\item  For any $1 \leq q <n$ and $q < p$, $\frac{1}{p} \geq \frac{1}{q} - \frac{k-m}{n}$, $H_k^q (M) \subset H_m^p(M)$.
\item For any $q \geq 1$, if $\frac{1}{q} < \frac{k-m}{n}$, then $H_k^q (M) \subset C^m_B(M)$.
\end{itemize}
\end{proposition}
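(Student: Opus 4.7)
The plan is to derive both embeddings from classical Euclidean Sobolev inequalities transported onto $(M,g)$ via uniform local geometric control provided by the two hypotheses. The Ricci lower bound combined with Bishop--Gromov volume comparison supplies upper bounds $\text{Vol}_g(B_x(r)) \leq C_1(n) r^n$ uniform in $x \in M$ for $r \leq 1$, while the non-collapsing assumption $\text{Vol}_g(B_x(1)) > \kappa$ provides matching lower bounds. This two-sided control on small balls is precisely the input needed to produce local Sobolev inequalities whose constants do not depend on the base point.

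First I would fix a sufficiently small radius $r_0 > 0$ and use a Vitali-type argument to extract a countable cover $\{B_{x_i}(r_0)\}_{i \in \mathbb{N}}$ of $M$ whose half-balls $\{B_{x_i}(r_0/2)\}$ are pairwise disjoint. The two-sided volume control then forces the multiplicity of the enlarged cover $\{B_{x_i}(2r_0)\}$ to be bounded by a constant $N_0$ depending only on $n$, the Ricci lower bound, $\kappa$ and $r_0$. A standard mollification construction yields a subordinate partition of unity $\{\chi_i\}$ whose covariant derivatives up to order $k$ are uniformly bounded independently of $i$.

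Next I would establish a local Sobolev inequality on each $B_{x_i}(r_0)$ with constants uniform in $i$. One route introduces harmonic coordinates (Jost--Karcher or Anderson type estimates) in which $g$ is $C^0$-close to the Euclidean metric; an alternative route applies Croke's isoperimetric inequality directly. Either way one obtains, for $\tfrac{1}{p} \geq \tfrac{1}{q} - \tfrac{k-m}{n}$ with $q < n$, the inequality
\[
\|u\|_{H_m^p(B_{x_i}(r_0/2))} \leq C \|u\|_{H_k^q(B_{x_i}(r_0))}
\]
on scalar functions, by iterating the one-step Sobolev inequality $\tfrac{1}{p_0} = \tfrac{1}{q} - \tfrac{1}{n}$. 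Promotion to tensor fields is immediate via Kato's inequality $\bigl|\nabla|\Psi|\bigr| \leq |\nabla \Psi|$ applied pointwise, which reduces the tensorial case to the scalar case.

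Finally I would assemble the global estimate: for $\Psi \in H_k^q(M)$ apply the local inequality to each $\chi_i \Psi$ supported in $B_{x_i}(r_0)$, exploit the uniform bounds on $\nabla^j \chi_i$ together with bounded multiplicity of the cover to sum the contributions, and conclude
\[
\|\Psi\|_{H_m^p(M)} \leq C(n, k, m, K, \kappa)\, \|\Psi\|_{H_k^q(M)}.
\]
The second embedding follows the same template after replacing the local Sobolev step by the local Morrey embedding $H_k^q(B_{x_i}(r_0)) \hookrightarrow C^m(\overline{B_{x_i}(r_0/2)})$, valid when $\tfrac{1}{q} < \tfrac{k-m}{n}$, which yields a uniform pointwise bound on $|\nabla^j \Psi|$ for $0 \leq j \leq m$ and hence membership in $C_B^m(M)$. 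The main delicate point, where the two hypotheses enter in tandem, is producing the cover together with locally uniform Sobolev constants: the Ricci lower bound provides volume comparison (hence bounded multiplicity and harmonic radius estimates), while the non-collapsing assumption prevents the local Sobolev constant from degenerating as the base point moves through the non-compact $M$.
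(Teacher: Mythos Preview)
The paper does not prove this proposition at all: it simply recalls the statement and cites Hebey's monograph \cite{Hebey} as the source. So there is no ``paper's own proof'' to compare against; the result is imported wholesale.

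Your sketch is a reasonable outline of how such results are typically established (and indeed is close in spirit to the arguments in Hebey): Bishop--Gromov plus non-collapsing give two-sided volume control, hence a Vitali cover with uniformly bounded multiplicity, and then one globalizes local Sobolev/Morrey estimates via a partition of unity. Two technical caveats are worth flagging. First, the harmonic-coordinate route you mention (Jost--Karcher, Anderson) generally requires two-sided Ricci bounds or an injectivity-radius lower bound, not merely a Ricci lower bound; the isoperimetric route (Croke, Buser, or Maz'ya-type arguments) is the one that works cleanly under the stated hypotheses. Second, producing a partition of unity with \emph{uniform bounds on covariant derivatives up to order $k$} is delicate under only a Ricci lower bound; the usual workaround is to prove the one-step embedding $H_1^q \hookrightarrow L^{q^\ast}$ (where only $C^1$ cutoffs are needed) and then iterate, rather than attacking the order-$k$ embedding directly. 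With those adjustments your strategy goes through.
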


Based on the Sobolev inequalities, we have the multiplication rules.
\begin{proposition}\label{prop-Multiplication}
For any $p\geq 1$, there are,
 \ali{multiplication}{
 \|f_1 f_2\|_{H^p_k} \lesssim {}& \|f_1\|_{H^p_k} \|f_2\|_{H^p_k}, & \text{if} \,\, k > \frac{n}{p}, \nnb \\
  \|f_1 f_2\|_{H^p_k} \lesssim {}& \|f_1\|_{H^p_{N-l}} \|f_2\|_{H^p_{k+l}}, & \text{if} \,\, N > \frac{n}{p} \,\, \text{and} \,\, 0\leq l \leq N -k.
  }
\end{proposition}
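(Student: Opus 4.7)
The plan is to derive both inequalities from three standard ingredients: the Leibniz rule for covariant derivatives, Hölder's inequality, and the Sobolev embeddings of Proposition \ref{prop-Sobolev}, which hold with a universal constant on $M$ thanks to the assumed bounded geometry (positive injectivity radius, Ricci lower bound, and volume non-collapsing \eqref{volum-non-collapsing}). Since no feature of compactness is used after the embeddings are in hand, the same multiplication estimates one has on $\mathbb{R}^n$ or on a closed manifold will transfer to the noncompact $M$.

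First I would expand $\nabla^j(f_1 f_2) = \sum_{a+b=j} c_{a,b}\, \nabla^a f_1 \otimes \nabla^b f_2$ for each $0 \leq j \leq k$ and estimate each piece in $L^p$. For the algebra estimate under the hypothesis $k > n/p$, the second clause of Proposition \ref{prop-Sobolev} supplies the continuous inclusion $H^p_k \hookrightarrow C^m_B$ whenever $k - m > n/p$. Given any split $a + b \leq k$, one of the two indices, say $a$, satisfies $a \leq k/2$, and, passing to low--order terms separately if $k$ is too small, $k - a > n/p$. Then $|\nabla^a f_1|$ is uniformly bounded by $\|f_1\|_{H^p_k}$, while $\|\nabla^b f_2\|_{L^p} \leq \|f_2\|_{H^p_k}$ directly, and Hölder together with summation over $j$ yields the first estimate. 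For the Gagliardo--Nirenberg-type second estimate, with fixed $l \in [0, N-k]$ and $N > n/p$, I would balance the two factors via the first clause of Proposition \ref{prop-Sobolev}, placing $\nabla^a f_1 \in L^{q_1}$ and $\nabla^b f_2 \in L^{q_2}$ with $1/q_1 + 1/q_2 = 1/p$, where the exponents are selected from the embeddings $H^p_{N-l} \hookrightarrow H^{q_1}_{a}$ and $H^p_{k+l} \hookrightarrow H^{q_2}_{b}$; in the borderline cases where $q_1$ or $q_2$ must be taken as $\infty$, the condition $N > n/p$ again supplies $H^p_N \hookrightarrow C^0_B$, and the argument is closed by Hölder as before.

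The main obstacle is purely combinatorial: for every split $a + b \leq k$ and every admissible $l$ one must exhibit a choice of $(q_1, q_2)$ satisfying the exponent inequalities of Proposition \ref{prop-Sobolev} simultaneously with $1/q_1 + 1/q_2 = 1/p$. The hypotheses $N > n/p$ and $0 \leq l \leq N - k$ are precisely what make every such case admissible, and the verification is a finite enumeration. No additional geometric input beyond Proposition \ref{prop-Sobolev} is required, so the argument carries over verbatim from the Euclidean or compact setting to the noncompact $M$ considered here.
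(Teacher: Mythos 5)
Your overall strategy (Leibniz rule, H\"older, and the Sobolev embeddings of Proposition \ref{prop-Sobolev}, whose constants depend only on the bounded-geometry hypotheses and not on compactness) is the right one: the paper deduces the proposition from the general product estimate of Palais, $\|f_1f_2\|_{H^p_k}\lesssim \|f_1\|_{H^p_{s_1}}\|f_2\|_{H^p_{s_2}}$ for $k\le\min\{s_1,s_2,s_1+s_2-\tfrac{n}{p}\}$, which rests on exactly this kind of computation. However, your argument for the first inequality contains a step that fails. You claim that in any split $a+b\le k$ the factor with $a\le k/2$ satisfies $k-a>\tfrac{n}{p}$, so that $\nabla^a f_1$ is bounded in $L^\infty$ by $\|f_1\|_{H^p_k}$. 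This requires $k/2\ge \tfrac{n}{p}$, i.e.\ $k\ge \tfrac{2n}{p}$, which is not implied by $k>\tfrac{n}{p}$. Concretely, take $n=5$, $p=2$, $k=3>\tfrac52$ and the split $a=1$, $b=2$: then $k-a=2\le\tfrac{n}{p}$ and $k-b=1\le\tfrac{n}{p}$, so neither $\nabla^a f_1$ nor $\nabla^b f_2$ can be placed in $L^\infty$ using only $H^2_3$ control, and the ``one factor in $L^\infty$, the other in $L^p$'' pairing is unavailable. The hedge ``passing to low-order terms separately if $k$ is too small'' does not address this, because the problematic case is an intermediate split, not an endpoint one.

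The repair is the balanced pairing you already invoke for the second inequality, and it is how the estimate is actually proved: for an intermediate split $0<a,b$ with $a+b\le k$, put $\nabla^a f_1\in L^{q_1}$ and $\nabla^b f_2\in L^{q_2}$ with $\tfrac{1}{q_1}+\tfrac{1}{q_2}=\tfrac{1}{p}$, choosing the exponents through the first clause of Proposition \ref{prop-Sobolev} so that $\tfrac{1}{q_1}\ge\tfrac{1}{p}-\tfrac{k-a}{n}$ and $\tfrac{1}{q_2}\ge\tfrac{1}{p}-\tfrac{k-b}{n}$; adding these two conditions shows the pair exists precisely when $k\ge\tfrac{n}{p}$, hence under the hypothesis $k>\tfrac{n}{p}$ (one still has to respect the side conditions of Proposition \ref{prop-Sobolev}, e.g.\ the borderline cases where one exponent is taken to be $\infty$, but this is the finite enumeration you describe). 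The $L^\infty$ embedding is then needed only for the endpoint splits $a=0$ or $b=0$, where it is legitimate. With this modification your proof of the first estimate goes through, the second estimate is handled as you outline (and in fact the first is the special case $s_1=s_2=k$ of the same bookkeeping), and the remark that no geometric input beyond Proposition \ref{prop-Sobolev} is needed is correct.
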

The estimates in Proposition \ref{prop-Multiplication} are in fact particular cases of the following general rules \cite{Palais}. Let $k \leq \min\{ s_1, \, s_2, \, s_1 + s_2-\frac{n}{p} \}$, $s_i \geq0$, $i=1,\,2$, then the following estimate holds \[ \|f_1 f_2\|_{H^p_k} \lesssim \|f_1\|_{H^p_{s_1}} \|f_2\|_{H^p_{s_2}}. \]

For any $\Psi \in  T^p_q (M)$, we define $H^{\prime p}_{ k}(M)$ be the Sobolev space with respect to the norm 
\begin{equation}\label{def-norm-w}
\|\Psi\|_{H^{\prime p}_{ k}} := \sum_{l=0}^k \left( \int_M |\nabla^{\mathring{l}} \Delta^{ [\frac{l}{2}] }  \Psi|^p \di \mu_g \right)^{\frac{1}{p}}, 
\end{equation}
where $\mathring{l}$ is an integer such that 
\begin{equation}\label{def-circ-l}
\mathring{l} =
\begin{cases}
0, &\text{if} \,\, l \,\, \text{is even}, \\
1, &\text{if} \,\, l \,\, \text{is odd}.
\end{cases}
\end{equation}

We will abbreviate $H^{\prime 2}_{k}(M)$ by $H^\prime_{k}(M)$, and $\|\cdot\|_{H^{\prime 2}_{k}}$ by $\|\cdot\|_{H^{\prime}_{k}}$. The following proposition shows the equivalence between $\|\cdot\|_{H_{k}}$ and $\|\cdot\|_{H^\prime_{k}}$.

 \begin{proposition}\label{prop-elliptic-Delta}
Let $(M, \, g)$ satisfy the assumption of Proposition \ref{prop-Sobolev}.
Fix an integer $N > \frac{n}{2}$. Suppose \[ \|R_{imjn} \|_{L^\infty} \quad \text{and} \quad \|\nabla R_{ipjq} \|_{H_{N-1}}\] are bounded. 
  For any $\Psi \in  T^p_q (M)$ with compact support, we have
\[ \|\Psi\|^2_{H_k} \lesssim \|\Psi\|^2_{H^\prime_{k}}, \quad k \leq N+2. \]
\end{proposition}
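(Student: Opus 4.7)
The approach is induction on $k$, with the core analytic input being a Bochner--Weitzenb\"ock identity that trades $\n^2$ for $\De$. The base cases $k=0$ and $k=1$ are tautological: by \eqref{def-circ-l} we have $\mathring{0}=0$, $\mathring{1}=1$, $[0/2]=[1/2]=0$, so the $k$-th summands of $\|\cdot\|_{H_k}$ and $\|\cdot\|_{H'_k}$ literally coincide. For the inductive step I would first establish the elementary identity
\bg{equation*}
 \int_M |\n^2 T|^2 \, \di \mu_g \;=\; \int_M |\De T|^2 \, \di \mu_g \;+\; \int_M \bigl( R \ast \n T \ast \n T + \n R \ast T \ast \n T \bigr) \, \di \mu_g
\ed{equation*}
for any compactly supported tensor field $T$, obtained by integrating $\int \n^i \n^j T \cdot \n_i \n_j T$ by parts twice and invoking the commutator $[\n_i,\n_j] T = R \ast T$. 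Compact support of $T$ kills every boundary contribution. The Sobolev embedding $H_N \hookrightarrow L^\infty$ (valid since $N > n/2$), combined with the hypothesized bounds $\|R\|_{L^\infty}, \|\n R\|_{H_{N-1}} \lesssim 1$, then yields the basic elliptic bound $\|\n^2 T\|^2 \lesssim \|\De T\|^2 + \|T\|_{H_1}^2$.

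Next I would iterate this identity, successively with $T = \De^{j} \Psi$ for the even case and $T = \n \De^{j} \Psi$ for the odd case, commuting $\De$ past any outer $\n$'s via $[\De,\n] = R \ast \n + \n R \ast$. After bookkeeping, this should produce, for even $k=2m$ and odd $k=2m+1$ respectively,
\bg{equation*}
 \|\n^{2m} \Psi\|^2 \lesssim \|\De^m \Psi\|^2 + \|\Psi\|_{H_{2m-1}}^2, \qquad \|\n^{2m+1} \Psi\|^2 \lesssim \|\n \De^m \Psi\|^2 + \|\Psi\|_{H_{2m}}^2.
\ed{equation*}
All commutator residues are strictly lower order in derivatives of $\Psi$ and can be absorbed into the lower-order Sobolev term by Cauchy--Schwarz together with the multiplication rules of Proposition \ref{prop-Multiplication}. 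The induction hypothesis $\|\Psi\|_{H_{k-1}} \lesssim \|\Psi\|_{H'_{k-1}} \leq \|\Psi\|_{H'_k}$ then closes the step.

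The principal obstacle is the combinatorial bookkeeping for the iterated commutators: each exchange of $\De$ past $\n$ produces a curvature factor $\n^a R$ whose derivative order must stay within the range $a \leq N-1$ for Proposition \ref{prop-Multiplication} to be applicable. Since we are restricted to $k \leq N+2$ and each commutator transfers at most one derivative from $\Psi$ onto $R$, the worst case is $a \leq k-2 \leq N$, which is exactly covered by the hypothesis $\|\n R\|_{H_{N-1}} < \infty$. A careful inductive tracking of these orders, together with standard tensor-valued integration by parts, completes the argument and delivers $\|\Psi\|^2_{H_k} \lesssim \|\Psi\|^2_{H'_k}$ for all $k \leq N+2$.
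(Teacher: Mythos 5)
Your proposal is correct and is essentially the argument the paper gives: both rest on integration by parts together with the $\nabla$–$\Delta$ commutation identity (the paper's Lemmas on commuting $\nabla$ and $\Delta$), with the curvature errors controlled by $\|R\|_{L^\infty}$, $\|\nabla R\|_{H_{N-1}}$ and the multiplication rules, and the estimate closed by induction on $k\le N+2$. The only difference is organizational — you iterate the Bochner identity on $T=\Delta^{j}\Psi$ (resp. $\nabla\Delta^{j}\Psi$), whereas the paper manipulates $\int_M|\nabla_{I_k}\Psi|^2\,\di\mu_g$ directly, using an extra integration by parts plus a small-constant Cauchy–Schwarz absorption to keep at most $k-2\le N$ derivatives on the curvature — but the key lemma and estimates coincide.
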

The proof is collected in Appendix \ref{subsec-equi-norms}.

\subsection{More conventions}

\subsubsection{$M$-Tensors}\label{def-M-tensor}
Let $\Psi_{\alpha_1 \cdots  \alpha_l}$ be a $(0, l)$-tensor on $\mathcal M$ satisfying
\begin{equation*}
\Psi_{\alpha_1 \cdots \alpha_{i-1} \beta \alpha_{i+1} \cdots \alpha_l} \cdot \p_t^\beta =0, \quad \forall \, i \in \{1, \cdots, l\},
\end{equation*}
where if $i=1$ or $l$, then $\alpha_0 $ and $\alpha_{l+1}$ are interpreted as being absent.
We can restrict $\Psi$ on $M_t :=\{t\} \times M$, and naturally interpret it as a tensor field on the Riemannian manifold $(M_t,\, g)$.

\subsubsection{Multi index} For notational convenience, we use $\nabla_{I_l} \Psi$ to denote the $l^{\text{th}}$ order covariant derivative $\nabla_{i_1} \cdots \nabla_{i_l}\Psi$ where the multi index $I_l=\{ i_1 \cdots i_l\}$ is used. 

\subsubsection{Index brackets}\label{sec-bracket} Round and square brackets on tensor indices are employed to identify the
symmetric and anti-symmetric, respectively, components of a tensor. For example\footnote{If $\Psi_{ijk} = -\Psi_{j i k}$, then $\Psi_{[ijk]}$ reduces to $\Psi_{[i j k]} = \frac{1}{3} \( \Psi_{ijk} + \Psi_{j k i} + \Psi_{k i j} \)$.} \[ \Phi_{(ij)} := \frac{1}{2} \( \Phi_{ij} + \Phi_{j i} \), \quad \Psi_{[i j k]} := \frac{1}{6} \( \Psi_{ijk} + \Psi_{j k i} + \Psi_{k i j} - \Psi_{i k j} - \Psi_{k j i} - \Psi_{j i k} \). \]

\subsubsection{Contractions}\label{def-contraction} Unless indicated otherwise, we use the metric $g_{i j}$ and its inverse to raise and lower indices. 
Throughout, we use $A*B$ to denote a linear combination of products of $A$ and $B$, with each product being a contraction (with respect to $g$) between the two $M$-tensors $A$ and $B$. We also use the notation $(A, B) \ast (D, E)$ to denote all possible linear combinations $A\ast D + B \ast D + A \ast E + B \ast E$. Similar rule applies to $(A, B, \cdots) \ast (D, E, \cdots)$ as well. Besides, we use $\pm A$ to denote any linear combinations of $A$.
In the estimates, we will only employ the formula $\|A * B\| \lesssim \|A\| \|B\| $ which allows ourselves to ignore the detailed product structure at this point. $\|\cdot \|$ denotes some Sobolev norm associated to $g$. 

\subsubsection{Some abbreviations} We always use $\Gm$ to denote $\Ew$ or $\Hw$ (defined in \eqref{E:def-1+n-Weyl}) for simplicity, if there is no need to distinguish them.

\subsubsection{Some constants} We let $C$ denote some universal constant which may vary from line to line. We use $I_k$ to denote constants depending on the initial energy (not on $\vep$). The constant $k$ indicates the number of derivatives used in the energy norms.

\section{The main equations}\label{sec-Main-eq}

In this section, we will derive the main equations for energy estimates.
\subsection{Bianchi equations}
To begin with, we collect the $1+n$ Bianchi equations which eventually lead to hyperbolic or elliptic systems when taking the vacuum Einstein equations into account.
Contracting the following Bianchi equations
\alis{
\D_\mu \Rmb_{i t \nu t} + \D_i \Rmb_{t \mu \nu t} + \D_t \Rmb_{\mu i \nu t} ={}& 0, \\
\D_\nu \Rmb_{t i \mu j} + \D_t \Rmb_{i \nu \mu j} + \D_i \Rmb_{\nu t \mu j} ={}& 0, \\
\D_\mu \Rmb_{i j \nu p} + \D_i \Rmb_{j \mu \nu p} + \D_j \Rmb_{\mu i \nu p} ={}& 0, 
}
  with $\breve g^{\mu \nu}$ leads to 
\alis{
\D^\mu \Rmb_{i t \mu t} - \D_i \Rmb_{t t} + \D_t \Rmb_{i t} = {}& 0, \\
\D^\mu \Rmb_{t i \mu j} - \D_t \Rmb_{i j} + \D_i \Rmb_{t j} ={}& 0, \\
\D^\mu \Rmb_{i j \mu p} - \D_i \Rmb_{j p} + \D_j \Rmb_{i p} ={}& 0.
} 
Consequently, combining with the vacuum Einstein equations, we obtain
\begin{align*}
\D^\mu \W_{i t \mu t} =0, 
\quad \D^\mu \W_{i t \mu j} = 0, \quad \D^\mu \W_{j i \mu p}=0.
\end{align*}
For the $1+n$ splitting, we define the projection to be 
\beq\label{def-projection}
\breve h^{\mu \nu} := \breve g^{\mu \nu} +  \p_t^\mu \p_t^\nu. 
\eeq
It then follows that
\begin{align*}
 \breve h^{\mu \nu}  \D_\mu \W_{i t \nu t} - \D_t \W_{i t t t}  &=0,\\
 \breve h^{\mu \nu}  \D_\mu \W_{t i \nu j} -  \D_t \W_{t i t j} &=0, \\
 \breve h^{\mu \nu}  \D_\mu \W_{j i \nu p} -  \D_t \W_{j i t p} &=0.
\end{align*}
Since $\D_t  \p_t =0$, and hence $\D_t \W_{i t t t} = 0$, and $\D_t \W_{j i t t} = 0$, the above identities reduce to
\begin{align}
 \breve h^{\mu \nu} \D_\mu \W_{i t \nu t} &=0,  \label{Bianchi-TT} \\
-  \D_t \W_{t i t q} +   \breve h^{\mu \nu} \D_\mu \W_{t i \nu q}  &=0,  \label{Bianchi-T-1} \\ 
-  \D_t \W_{j i t q} +  \breve h^{\mu \nu}  \D_\mu \W_{j i \nu q}  &=0.  \label{Bianchi-no-T}
\end{align}
Besides, we present the other Bianchi equations
\begin{align}
  \D_t \W_{p i q t} + \D_p \W_{i t q t} - \D_i \W_{p t q t} &=0, \label{Bianchi-TT-dual} \\
\D_t \W_{i p q j} + \D_i \W_{p t q j} + \D_p \W_{t i q j} &=0, \label{Bianchi-extra} \\
\D_p \W_{i j q l} + \D_i \W_{j p q l} + \D_j \W_{p i q l} &=0.\label{Bianchi-no-T-dual}
\end{align}

Based on these Bianchi equations, we will derive a couple of elliptic or transport systems for $\Ew$, $\Hw$, $\Jw$ or $W$.
More precisely, we find that \eqref{Bianchi-T-1} and \eqref{Bianchi-TT-dual} constitute a hyperbolic system for $\Ew$, $\Hw$; 
\eqref{Bianchi-no-T} and \eqref{Bianchi-extra} yield a transport system for $\Hw$ and  $\Jw$, see Lemmas \ref{lem-trans-Hw-Ew} and \ref{lem-trans-Hw-Jw}.
In addition,
\eqref{Bianchi-no-T} and \eqref{Bianchi-TT-dual} imply the following equation
\begin{equation}\label{duality-TT-noT} 
\D_p \W_{i t j t} - \D_i \W_{p t j t} =  \breve h^{\mu \nu} \D_\mu \W_{p i \nu j}.
\end{equation}
Then \eqref{Bianchi-no-T-dual}--\eqref{duality-TT-noT} leads to an elliptic system for $\Jw$, while
\eqref{duality-TT-noT} together with \eqref{Bianchi-TT} composes an elliptic system for $\Ew$, see Lemma \ref{lem-div-E-curl}.

\subsection{Elliptic equations}
\bg{lemma}\label{lem-div-E-curl}
The spatial Weyl tensor $W$ satisfies the following elliptic system,
\begin{subequations}
\begin{align}
\n^l W_{ljpi}={}& \pm \nabla \Ew \pm \n \Si \pm  \n  \eta  + \n \Si \ast (\Si, \, \eta) + \n \eta \ast (\Si, \, \eta), \label{eq-div-W} \\
  \nabla_{[p} W_{i m] j n} ={}& \pm \n \Ew \pm \n \Si \pm \n \eta +  \n \Si \ast (\Si, \, \eta) + \n \eta \ast (\Si, \, \eta). \label{eq-curl-W}
\end{align}
\end{subequations}
The symmetric and trace-free tensor $\Ew$ satisfies the following elliptic system as well,
\begin{subequations}
\begin{align}
 \nabla^p \Ew_{j p} ={} & - \Si^{q p} \Hw_{j q p}, \label{eq-div-Ew} \\
 \nabla_p \Ew_{i  j } - \nabla_i  \Ew_{p  j } = {}&  \frac{n-2}{n-3} \nabla^l \Jw_{l j p i}  + (n-2) \Hw_{p i j} \nnb \\
-{}& \frac{1}{n-3} \(   g_{ij} \Si^{q l} \Hw_{p q l} - g_{p j} \Si^{q l} \Hw_{i q l} \)\nnb \\
+{}&  \frac{n-2}{n-3}  \( \Si^{l}_{j} \Hw_{p i l} + \Si_{p}{}^{\!l} \Hw_{l i j} + \Si_{i}{}^{\!l} \Hw_{p l j} -n \eta \Hw_{p i j} \). \label{eq-div-Jw}
\end{align}
\end{subequations}
\ed{lemma}
\bg{remark}
The system \eqref{Bianchi-no-T-dual}--\eqref{duality-TT-noT} leads to an elliptic system for $\Jw$:
\begin{align*}
  \nabla^l \Jw_{l j p i} ={}&  \nabla \Ew \pm \Hw  + ( \Si, \eta) \ast \Hw, \\
 \nabla_{[p} \Jw_{i m] j n} ={}&  \nabla \Ew \pm \Hw  + ( \Si, \eta) \ast \Hw.
\end{align*}
As a further remark, by \eqref{div-curl-k-hat}, $\Hw = \n \Ew \pm \n \eta$ and the above system for $\Jw$ is equivalent to the elliptic system of $W$.
\ed{remark}

\bpf[Proof of Lemma \ref{lem-div-E-curl}]
Due to the Bianchi identity \[ \n_{[p} R_{im]jn} = 0,\] \eqref{eq-Riem-W} implies \eqref{eq-curl-W}.
Taking contraction on \eqref{eq-curl-W}, we derive \eqref{eq-div-W}.

By \eqref{Bianchi-TT} and Proposition \ref{prop-decomp-Bianchi},
\[
0=\breve h^{\mu \nu} \D_\mu \W_{i t \nu t}= \breve h^{p q} \D_p \W_{i t q t} \stackrel{\eqref{id-Dp-W-TT}}{=}  t^{-2} \( \nabla^p \Ew_{i p} + k^{p l} \Hw_{i l p} \) =0.
\]
In view of the fact that $\Hw$ is trace-free,  \eqref{eq-div-Ew} follows immediately.


Consider the identity \eqref{duality-TT-noT} and 
make use of the computing identities \eqref{id-Dp-W-p} and \eqref{id-Dp-W-TT} (in Proposition \ref{prop-decomp-Bianchi}),
\begin{align*}
 \breve h^{\mu \nu} \D_\mu \W_{p i \nu j}  &=  \nabla^l \Kw_{p i l j} - k^{l}_{p} \Hw_{l j i }  + k^{l}_{i} \Hw_{ l j p} - \tr  k \Hw_{p i j}  + k^{l}_{j} \Hw_{p i l},\\
\D_p \W_{i t j t} - &\D_i \W_{p t j t} = \nabla_p \Ew_{i  j } - \nabla_i  \Ew_{p  j }  \\
&+ k_{p}{}^{\!l} \Hw_{i l j} + k_{p}{}^{\!l} \Hw_{j l i} - k_{i}{}^{\!l} \Hw_{p l j} - k_{i}{}^{\!l} \Hw_{j l p}.
\end{align*}
Therefore the identity \eqref{duality-TT-noT} yields
\begin{align*}
& \nabla_p \Ew_{i  j } - \nabla_i  \Ew_{p  j } \\
= {}& \nabla^l \Kw_{p i l j} - \tr  k \Hw_{p i j}  + k^{l}_{j} \Hw_{p i l} + k_{p}{}^{\!l} \Hw_{l i j} + k_{i}{}^{\!l} \Hw_{p l j}. 
\end{align*}
Next, we intend to reformulate $\nabla^l \Kw_{p i l j} $ in terms of $\nabla^l \Jw_{p i l j}$.
By virtue of the definition for $\Jw$ \eqref{def-Jw},
\alis{
\nabla^q \Kw_{qjpi} = \nabla^q \Jw_{qjpi} + \frac{1}{n-2} \( \nabla_p \Ew_{ij} - \nabla_i \Ew_{pj} + g_{ij} \nabla^q \Ew_{pq} - g_{pj} \nabla^q \Ew_{qi} \),
} 
we obtain
\begin{align*}
& \nabla_p \Ew_{i  j } - \nabla_i  \Ew_{p  j } \\
= {}&  \nabla^l \Jw_{l j p i} + \frac{1}{n-2} \( \nabla_p \Ew_{i j} - \nabla_i \Ew_{pj} + g_{ij} \nabla^l \Ew_{p l} - g_{p j} \nabla^l \Ew_{l i} \) \\
& - \tr  k \Hw_{p i j} - 3 \Hw_{p i j} + \Si^{l}_{j} \Hw_{p i l} + \Si_{p}{}^{\!l} \Hw_{l i j} + \Si_{i}{}^{\!l} \Hw_{p l j} . 
\end{align*}
As a consequence,
\begin{align*}
&  \frac{n-3}{n-2} \( \nabla_p \Ew_{i  j } - \nabla_i  \Ew_{p  j } \) \\
= {}&  \nabla^l \Jw_{l j p i} + \frac{1}{n-2} \(   g_{ij} \nabla^l \Ew_{p l} - g_{p j} \nabla^l \Ew_{l i} \) \\
& + (n-3) \Hw_{p i j}  + \Si^{l}_{j} \Hw_{p i l} + \Si_{p}{}^{\!l} \Hw_{l i j} + \Si_{i}{}^{\!l} \Hw_{p l j} -n \eta \Hw_{p i j}. 
\end{align*}
Then \eqref{eq-div-Jw} follows from substituting \eqref{eq-div-Ew} into the above identity.

\epf

\subsection{Hyperbolic systems of Maxwell type}

In this subsection, we employ the Bianchi equations \eqref{Bianchi-T-1}--\eqref{Bianchi-extra} to derive the two hyperbolic systems of Maxwell type for $(\Ew, \,\Hw)$ and $(\Hw, \, \Jw)$.

\bg{lemma}\label{lem-trans-Hw-Ew}
The Bianchi equations \eqref{Bianchi-T-1} and \eqref{Bianchi-TT-dual} indicate the following system for $(\Ew, \,\, \Hw)$
\begin{subequations}
 \begin{align}
 & \lie_{\dtau} \Ew_{i  j} +(n-2)\Ew_{ij} + \nabla^p \Hw_{p (i j)} \nnb \\
={}&  \Si^{p q} W_{p i q j} + \(\Si, \, \eta \) \ast \Ew + \Si \ast \Si \ast \Si, \label{trans-EHw-Ew} \\
  &\lie_{\dtau} \Hw_{p i j} + \Hw_{p i j} +   \nabla_p \Ew_{i  j } - \nabla_i  \Ew_{p  j } \nnb \\
  ={}& \Si\ast \Hw. \label{trans-EHw-Hw} 
\end{align}
\end{subequations}
\ed{lemma}

\bpf

To derive \eqref{trans-EHw-Ew}, we appeal to the Bianchi identity \eqref{Bianchi-T-1}. 
In more details, we use the computing identities \eqref{id-Dp-W-T} and \eqref{id-DT-W-TT} (in Proposition \ref{prop-decomp-Bianchi}) to replace $\D^p \W_{t i p j}$ and $ \D_t \W_{t i t j}$ in \eqref{Bianchi-T-1} respectively. It then follows that
 \alis{
 & t^{-1} \( \lie_{\dtau} \Ew_{i j} + k_i^p \Ew_{p j} + k_j^p \Ew_{i p }\) \\
  & + t^{-1} \nabla^q \Hw_{q j i} + t^{-1} \( k^{p q} \Kw_{i p q j} - \tr k\Ew_{i j} + k^q_{ j} \Ew_{ i q}\)=0.
 }
 That is,
\alis
{& \lie_{\dtau} \Ew_{i  j}  + \nabla^p \Hw_{p j i} - k^{p q} \Kw_{p i q j}  - \tr  k \Ew_{i j} + 2 k_{p j} \Ew^p_{i} + k_{i}^{p}  \Ew_{p j}  =0.
}
 Noting that \[ k_{pq}=-g_{pq}+\Si_{pq}+\eta g_{pq} \] and taking the symmetric part, we obtain, 
\alis
{& \lie_{\dtau} \Ew_{i  j} +(n-2)\Ew_{ij} + \nabla^p \Hw_{p (i j)} \nnb \\
& - \Si^{p q} \Kw_{p i q j} - (n-2) \eta \Ew_{ij} + 3  \Si_{(i}^{p}  \Ew_{j) p} =0.
}
In addition, we use \eqref{def-Jw} to substitute $\Kw$ by $\Jw$, then the above equation reads
\begin{align}\label{eq-pt-Ew-Hw}
& \lie_{\dtau} \Ew_{i  j} +(n-2) \Ew_{ij} + \nabla^p \Hw_{p (i j)} \nnb \\
& - \Si^{p q} \Jw_{p i q j} - (n-2) \eta \Ew_{ij} \nnb \\
& -  \frac{1}{n-2} \Si^{p q} \Ew_{p q} g_{i j}  + \( 3 + \frac{2}{n-2} \)  \Si_{(i}^{p}  \Ew_{j) p} =0.
\end{align}
Furthermore, using \eqref{E:relat-W-Jw} to re-express $\Jw$ in terms of $W$, we arrive at \eqref{trans-EHw-Ew}.

As for \eqref{trans-EHw-Hw}, we turn to the Bianchi equation \eqref{Bianchi-TT-dual}. 
Making use of the computing identities \eqref{id-DT-W-T} and \eqref{id-Dp-W-TT}  (in Proposition \ref{prop-decomp-Bianchi}),  we rewrite \eqref{Bianchi-TT-dual} as
\[ 0= \nabla_p \Ew_{i  j } - \nabla_i  \Ew_{p  j }  + \lie_{\dtau} \Hw_{p i j} + \Hw_{p i j} + k_j^l  \Hw_{p i l} + k_{p}^{l} \Hw_{j l i} + k_i^l \Hw_{ljp}. \]
With the help of the algebra identity $\Hw_{[ijk]}=0$, the above equation reduces to
\beq\label{eq-tr-H}
  \lie_{\dtau} \Hw_{p i j} + \Hw_{p i j} +   \nabla_p \Ew_{i  j } - \nabla_i  \Ew_{p  j } = - \(  \Si_j^l  \Hw_{p i l} + \Si_{p}^{l} \Hw_{j l i} + \Si_i^l \Hw_{ljp} \),
\eeq
which is further abbreviated to  \eqref{trans-EHw-Hw}.
\epf

Similar to the case for $\( \Ew, \, \Hw \)$, we have the following lemma for $(\Hw, \, \Jw)$  as well.
\bg{lemma}\label{lem-trans-Hw-Jw}
We infer from the Bianchi equations \eqref{Bianchi-no-T} and \eqref{Bianchi-extra} the following transport system for $(\Hw, \, \Jw)$
\begin{subequations}
\begin{align}
 & \lie_{\dtau} \Hw_{p i j} + (n-1) \Hw_{p i j} +  \frac{n-2}{n-3} \nabla^l \Jw_{l j p i} \nnb \\
  = {} & ( \Si, \eta) \ast \Hw,  \label{trans-HJw-Hw} \\
&  \lie_{\dtau} \Jw_{ipjq} + \nabla_i \Hw_{j q p} - \nabla_p \Hw_{j q i} -  \frac{1}{n-2}\(\dive \Hw \odot g\)_{ipjq}  \nnb \\
 = {}&  (\Si, \eta) \ast \Jw  + (\Si, \eta) \ast \Ew, \label{trans-HJw-Jw}
\end{align}
\end{subequations}
where we set \[ \dive \Hw_{(i j)} :=\nabla^l \Hw_{l (ji) }. \]  
\ed{lemma}

\bg{remark}
Due to the relation between $\Jw$ and $W$ \eqref{E:relat-W-Jw}, \eqref{trans-HJw-Jw} (or more precisely \eqref{pt-HJw-Jw}) suggests a transport equation for $W$,
\begin{align}\label{eq-pt-W}
&  \lie_{\dtau} W_{ipjq} + \nabla_i \Hw_{j q p} - \nabla_p \Hw_{j q i} -  \frac{3}{n-2}(\dive \Hw \odot g)_{ipjq} - 2 \( \Ew \odot g \)_{ipjq} \nnb \\
 = {}& \Si \ast W  + (\Si, \, \eta) \ast \Si + (\Si, \, \eta) \ast \Ew + (\Si, \, \eta) \ast \Si \ast \Si.
\end{align}
\ed{remark}
\bg{remark}
The two systems of $(\Ew,\, \Hw)$ and $(\Hw, \, \Jw)$ in Lemmas \ref{lem-trans-Hw-Ew} and \ref{lem-trans-Hw-Jw} respectively are never independent to each other. For instance, \eqref{trans-HJw-Hw} together with \eqref{eq-div-Jw} implies \eqref{trans-EHw-Hw}.
\ed{remark}

\bpf[Proof of Lemma \ref{lem-trans-Hw-Jw}]
For \eqref{trans-HJw-Hw}, we combine \eqref{eq-div-Jw} with \eqref{eq-tr-H} to derive
\begin{align}
 & \lie_{\dtau} \Hw_{p i j} + (n-1) \Hw_{p i j} +  \frac{n-2}{n-3} \nabla^l \Jw_{l j p i}  \nnb\\
  = {} & \frac{1}{n-3} \(   g_{ij} \Si^{q l} \Hw_{p q l} - g_{p j} \Si^{q l} \Hw_{i q l} \) +  \frac{(n-2)n}{n-3}  \eta \Hw_{p i j} \nnb \\
&- 2 \Si^{l}_{j} \Hw_{p i l} +  \Si_{p}{}^{\!l} \Hw_{i j l} - \Si_{i}{}^{\!l} \Hw_{p j l}  \nnb \\
& +  \frac{1}{n-3} \( \Si_{i}{}^{\!l} \Hw_{l p j} - \Si_{p}{}^{\!l} \Hw_{l i j} +  \Si^{l}_{j} \Hw_{p i l}  \). \label{pt-HJw-Hw}
\end{align}
This equation is simplified as \eqref{trans-HJw-Hw} when the detailed product structure is ignored.
Alternatively, \eqref{trans-HJw-Hw} can be inferred from the $1+n$ decomposing of the Bianchi identity \eqref{Bianchi-no-T}.

To derive \eqref{trans-HJw-Jw}, we turn to the Bianchi identity \eqref{Bianchi-extra}. 
With the help of the calculations \eqref{id-DT-W-p} and \eqref{id-Dp-W-T} (in Proposition \ref{prop-decomp-Bianchi}), \eqref{Bianchi-extra} becomes
\alis {
&  t  \lie_{\dtau} \Kw_{i p j q}  + 2 t \Kw_{i p j q} + t \( \nabla_i \Hw_{j q p} - \nabla_p \Hw_{j q i} \)  \\
& + t \( k_{i}^l \Kw_{l p j q} + k_{p}^l \Kw_{i l j q} + k_{j}^l \Kw_{i p l q} + k_{q}^l \Kw_{i p j l} \) \\
&+ t \( k_{i}^l \Kw_{p l j q} - k_{i j} \Ew_{p q} + k_{i q} \Ew_{p j} \) \\
&+ t \( - k_{p}^l \Kw_{i l j q} + k_{p j} \Ew_{i q} - k_{p q} \Ew_{i j} \) =0,
}
which, via
$k_{pq}=-g_{pq}+\Si_{pq}+\eta g_{pq}$, can be further turned into
\ali{eq-transport-K}
{& \lie_{\dtau} \Kw_{i p j q}  + \nabla_i \Hw_{j q p} - \nabla_p \Hw_{j q i} \nnb \\
&+ \Si_{j}^l \Kw_{i p l q} + \Si_{q}^l \Kw_{i p j l} +2\eta \Kw_{ipjq} \nnb \\
& + \(g \odot \Ew \)_{i p j q} - \(\Si \odot \Ew \)_{i p j q} - \eta \(g \odot \Ew \)_{i p j q}
=0.
}
In what follows, we mean to transform \eqref{E:eq-transport-K} into a transport equation for $\Jw$ through
\alis{
\lie_{\dtau} \Jw_{ipjq} ={}& \lie_{\dtau} \Kw_{ipjq} - \frac{1}{n-2} ( \Ew \odot \lie_{\dtau} g)_{i p j q} - \frac{1}{n-2} ( \lie_{\dtau} \Ew \odot g)_{i p j q},
} 
which is suggested by  the relation \eqref{def-Jw}.
As a further step, we make use of \eqref{eq-evolution-g} and \eqref{eq-pt-Ew-Hw} to arrive at
\begin{align}
& \lie_{ \dtau} \Jw_{ipjq} + \nabla_i \Hw_{j q p} - \nabla_p \Hw_{j q i} -  \frac{1}{n-2}(\dive \Hw \odot g)_{ipjq}  \nnb \\
 = {}& - \Si_{j}^l \Jw_{i p l q} + \Si_{q}^l \Jw_{i p l j} - 2\eta \Jw_{ipjq} - \frac{1}{n-2} \( (\Si \cdot \Jw) \odot g \)_{i p j q}  \nnb \\
 & + \frac{2}{n-2} \eta ( \Ew \odot g)_{i p j q} + \frac{n}{n-2}  ( \Ew \odot \Si)_{i p j q} \nnb \\
&- \frac{1}{(n-2)^2} \Si^{kl} \Ew_{kl} ( g \odot g)_{i p j q}  +  \( \frac{3}{n-2} + \frac{2}{(n-2)^2} \)  \( (\Si \cdot \Ew  ) \odot g \)_{ipjq}. \label{pt-HJw-Jw}
\end{align}
Here we set $\dive \Hw_{(i j)} :=\nabla^l \Hw_{l (ji) }$,  and \[ (\Si \cdot \Jw )_{i j} := \Si^{k l}\Jw_{i k j l}, \quad (\Si \cdot \Ew)_{i j} :=\Si_{( i }^{l}  \Ew_{j) l}. \] In particular, in \eqref{pt-HJw-Jw}, \[ ( \dive \Hw \odot g)_{ipjq}= \nabla^l \Hw_{l (ji)} g_{pq} - \nabla^l \Hw_{l (jp)} g_{qi} + g_{ij} \nabla^l \Hw_{l (qp)} - g_{pj} \nabla^l \Hw_{l (iq)}. \]

The equations \eqref{pt-HJw-Hw} and \eqref{pt-HJw-Jw} lead to the conclusion of this lemma.
\epf

\section{Energy estimates}\label{sec-Energy-estimates}
In this section, we will prove the following theorem.
\begin{theorem}\label{thm-EE}
Suppose the background Einstein space $(M, \, \ga)$ and the initial data $(M, \, g_0, \, k_0)$ satisfy the assumptions in Theorem \ref{thm-global-existence-Ein}. 
Then along with the Einstein flow, the solution $(M, \, g(t))$ exists for all $t \in [t_0, + \infty)$ and we have the following estimates
\alis{
& t^{1-\delta} \( \|\Sigma_{ij} \|_{H_{N+1}} + \| \Ew \|_{H_N} + \|\Hw \|_{H_N} \) +  t \|\eta \|_{H_{N+2}}  \lesssim  \varepsilon, \\
& \|g_{ij} - \gamma_{ij} \|_{H_{N+2}} + \|W - W[\gamma] \|_{H_{N}}  \lesssim \varepsilon.
}

\end{theorem}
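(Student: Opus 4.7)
The plan is a continuous induction (bootstrap) argument based on the local existence theorem for the vacuum Einstein equations in Gaussian normal gauge. On a bootstrap interval $[t_0, T_\ast)$ I would assume the target bounds of Theorem \ref{thm-EE} hold with a slightly inflated constant and then improve each of them strictly via energy estimates; combined with the local theorem this extends the solution to $[t_0, +\infty)$. The heart of the matter is to produce a closed hierarchy of energy inequalities for $(\Ew, \Hw, \Jw, W, \Sigma, \eta, g)$, organised by the decay rates dictated by the linear structure: from \eqref{sym-EH-E}--\eqref{sym-EH-H} the favourable linear coefficients $(n-2)$ and $1$ force $(\Ew, \Hw)$ to decay almost like $t^{-1}$, while the absence of such a coefficient in \eqref{sym-HJ-J} means only a uniform bound can be expected for $\Jw$, hence for $W$ via \eqref{E:relat-W-Jw}.

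I would begin with the zero-order estimate for the $(\Hw, \Jw)$ system of Lemma \ref{lem-trans-Hw-Jw}. Pairing $\Hw$ with \eqref{trans-HJw-Hw} and $\frac{n-2}{n-3}\Jw$ with \eqref{trans-HJw-Jw} and integrating over $M$, the principal spatial derivative contributions combine into a divergence that vanishes by the density theorem (Corollary \ref{coro-density}); Gr\"onwall then yields a uniform bound on $\|\Jw\|$ using the a priori decay of $\Sigma, \eta$. Next, for the zero-order $(\Ew, \Hw)$ estimate from Lemma \ref{lem-trans-Hw-Ew}, the coupling $\Sigma^{pq} W_{piqj}$ in \eqref{trans-EHw-Ew}, after multiplication by $2t\Ew^{ij}$, is rewritten using \eqref{eq-evolution-Si} (which gives $\Ew = \lie_{\p_t}(t\Sigma) + \mathrm{l.o.q.}$) as a total time derivative of the modified quantity
\begin{equation*}
\int_M t^2 \Sigma^{pq} \Sigma^{ij} W_{piqj}\, \di\mu_g
\end{equation*}
up to lower-order terms. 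Replacing $W$ by $W[\gamma]$ at leading order and invoking Definition \ref{def-Non-positive-Weyl} yields a coercive modified energy, while the perturbation $W - W[\gamma]$ is absorbed using the uniform closeness of $g$ to $\gamma$ coming from \eqref{eq-evolution-g}. This produces the almost $t^{-1}$ decay of $\|\Ew\|$ and $\|\Hw\|$ at the $L^2$ level.

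For the higher-order estimates, the key step is to linearise the awkward coupling $\nabla^k(\Sigma^{pq} W_{piqj})$ via the elliptic system of Lemma \ref{lem-div-E-curl}, reducing $\|W\|_{H_N}$ to $\|W\|$ plus lower-order data already under control; this turns the leading higher-order commutator into an effectively linear source. The higher-order energy identity \eqref{E:identity-intro} applied to \eqref{sym-EH-E}--\eqref{sym-EH-H} produces, in dimension $n\ge 4$, borderline quadratic terms with coefficient $(n-3)^{k-m}$ that would not close by a naive estimate; these are transformed via \eqref{sym-symbolic-1}, replacing $\nabla^p\Hw_{pji}$ by $-\lie_{\dtau}\Ew_{ij} + \mathrm{l.o.t.}$, into boundary time-derivative terms and terms of favourable sign exactly as in \eqref{E:id-good-sign}. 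Induction on $k$ up to $N$ closes the higher-order $(\Ew, \Hw)$ bounds. The remaining estimates follow: the elliptic Codazzi system \eqref{Codazzi-div-k}--\eqref{div-curl-k-hat} transfers control of $\|\Ew, \Hw\|_{H_N}$ to $\|\Sigma\|_{H_{N+1}}$, the transport equation \eqref{eq-evolution-eta} together with the saving-regularity structure from \cite{W-Y-open-M} handles $\|\eta\|_{H_{N+2}}$, and integrating \eqref{eq-evolution-g} gives $\|g - \gamma\|_{H_{N+2}} \lesssim \varepsilon$. Finally, Proposition \ref{prop-Weyl} with \eqref{E:relat-W-Jw} yields the stated bound on $W - W[\gamma]$.

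The hard part will be maintaining the positivity of the modified energy introduced above and its higher-order analogues under the bootstrap hypotheses rather than on the exact background $\gamma$. The non-positive Weyl condition gives the correct sign on $(M, \gamma)$, but transferring it to $(M, g(t))$ requires a careful perturbation argument controlling $W - W[\gamma]$ and $g - \gamma$ only by quantities available at the relevant stage of the bootstrap. Arranging the inequalities so that the estimate for $g - \gamma$, which feeds into the coercivity of the modified energy, is consistent with the estimate for $(\Ew, \Hw)$, which in turn drives the decay of $g - \gamma$ via \eqref{eq-evolution-g}, without circularity, is the delicate part of the argument.
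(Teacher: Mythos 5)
Your proposal follows essentially the same route as the paper: the same bootstrap hierarchy, the uniform bound on $\Jw$ (hence $W$) from the transport system, the modified energy $\int_M t^2\Sigma^{pq}\Sigma^{ij}W_{piqj}\,\di\mu_g$ whose sign is controlled by the non-positive Weyl condition plus a perturbation argument in $W-W[\gamma]$ and $g-\gamma$, the elliptic linearization of the $\nabla^k(\Sigma\ast W)$ couplings via Lemma \ref{lem-div-E-curl}, and the replacement of $\nabla^p\Hw_{p(ij)}$ through the transport equation to turn the $(n-3)^{k-m}$ quadratic terms into favourably signed contributions, closed by induction in $k$. The one detail to tighten is that $\|g-\gamma\|_{H_{N+2}}$ cannot be obtained by directly integrating \eqref{eq-evolution-g} (that would require $\|\Sigma\|_{H_{N+2}}$, which is not controlled); as in the paper, the top order of $g-\gamma$ needs the same renormalization via the wave equation for $\Sigma$ that you already invoke for the top order of $\eta$.
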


\subsection{Bootstrap assumptions}
Recall the fixed numbers $0<\delta < \frac{1}{6}$ and $N > \frac{n}{2}$.
We start with the following weak assumptions: Suppose  $\Lambda$ is a large constant to be determined, and
\begin{align}
t\| \Si \|_{H_{N+1}} + t \|\eta\|_{H_{N+2}}  \leq \vep \Ld t^{\de}, &\quad \| \Si \|_{C^0} + \| \eta \|_{C^0}\leq \vep \Ld,  \label{bt-Si-eta} \\
 t\|\Ew \|_{H_N} + t\|\Hw \|_{H_N}  \leq \vep \Ld t^{\de}, & \quad \| \Ew \|_{C^0}\leq \vep \Ld,  \label{bt-Ew-Hw} \\
 \|g_{ij} - \gamma_{i j}\|_{H_{N+2}} \leq \vep \Ld, & \quad \| g_{ij} - \gamma_{i j} \|_{C^1} \leq \varepsilon \Lambda, \label{bt-g} \\
 \|W \|_{H_N}  & \leq \Ld. \label{bt-Jw}
\end{align}
We will improve these bootstrap assumptions by showing that \eqref{bt-Si-eta}--\eqref{bt-Jw} implies the same inequalities hold with the constant $\Lambda$ replaced by $\frac{1}{2} \Lambda$.
\bg{remark}
The data assumption \eqref{intro-initial-data} tells
\[ \| R[g_0]_{im}{}^{j}{}_{n} - R[\ga]_{im}{}^{j}{}_{n} \| ^2_{H_N(M,\, g_0)}  \lesssim \varepsilon^2, \] and thus
\alis{
 \| R[g_0]_{ij} + (n-1) g_{0ij} \|^2_{H_N(M, \, g_0)} + \|W[g_0] - W[\ga] \|^2_{H_N(M,\, g_0)}  \lesssim \varepsilon^2.
 }
Moreover, with the help of Gauss-Codazzi equations \eqref{Codazzi-curl-k}--\eqref{Gauss-Ricci-hat-k}, it follows that
\[ \|\Ew[g_0]  \|^2_{H_N(M,\, g_0)} + \|\Hw[g_0]  \|^2_{H_N(M,\, g_0)}  \lesssim \varepsilon^2. \] In the end, we infer from the relation between $W$ and $\Jw$ \eqref{E:relat-W-Jw} that
\[ \|\Jw[g_0] - W[g_0]  \|^2_{H_N(M,\, g_0)}  \lesssim \varepsilon^2. \]
Note that, $\Ew[g_0]$, $\Hw[g_0]$ and $\Jw[g_0]$ denote the initial values of $\Ew,\,\Hw,\,\Jw$ on the initial hypersurface. All the above estimates show that the bootstrap assumptions \eqref{bt-Si-eta}--\eqref{bt-Jw} hold initially.

For notational simplicity, we will denote 
\beq\label{def-Jw0-W0}
 \Jw_0 := \Jw[g_0], \quad W_0 : =W[g_0].
\eeq
\ed{remark}

As the large constant $\Ld$ is independent of $\varepsilon$, for $\vep>0$ small enough, \eqref{bt-g} implies that 
$g$ and $\gamma$ are equivalent as bilinear forms, and $g$ is close to $\gamma$. Therefore,  the spatial manifold $\(M, \,g\)$ satisfies the volume non-collapsing condition \eqref{volum-non-collapsing}, since $\(M, \, \ga\)$ has positive injective radius and hence is volume non-collapsing. 
Moreover, under the bootstrap assumptions \eqref{bt-Si-eta}--\eqref{bt-Jw}, we know from the Gauss equation \eqref{Gauss-Ricci-hat-k} that the Ricci tensor of $g_{ij}$ is bounded from below.
Therefore, by Proposition \ref{prop-Sobolev}, there are uniform Sobolev inequalities on $(M, g)$ under the bootstrap assumptions \eqref{bt-Si-eta}--\eqref{bt-Jw}.  
Furthermore, due to \eqref{bt-g}, the density theorem follows from Proposition \ref{pro-density}.
\begin{corollary}\label{coro-density} 
Suppose the bootstrap assumptions  \eqref{bt-Si-eta}--\eqref{bt-Jw} hold, then we have
\alis{
H_{0, k}(M) &= H_{k}(M), \quad k\leq N+2, \,\, N > \frac{n}{2},
} 
where $H_{0, k}(M)$, $H_{k}(M)$ are defined in Section \ref{sec-Sobolev}.
\end{corollary}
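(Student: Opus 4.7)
The plan is to reduce the corollary directly to Proposition \ref{pro-density} (the general density theorem in the appendix). As the preceding sentence already indicates, the only content of the proof is the verification that the hypotheses of Proposition \ref{pro-density} are all met by the Riemannian manifold $(M, g)$ under the standing bootstrap assumptions \eqref{bt-Si-eta}--\eqref{bt-Jw}. Such a general density theorem requires, in standard form, completeness, a positive (uniform) injectivity radius, and suitable $L^\infty$ bounds on the Riemann tensor and (a few of) its covariant derivatives.

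First, I would use \eqref{bt-g} together with the Sobolev embedding $H_{N+2}\hookrightarrow C^1$ (valid since $N > n/2$ and $(M, g)$ is volume non-collapsing, cf. the paragraph immediately preceding the corollary) to conclude that for $\vep$ sufficiently small, $g$ and $\ga$ are uniformly equivalent bilinear forms, with constants independent of $x \in M$. Completeness of $(M, g)$ is inherited from that of $(M, \ga)$ by equivalence of metrics, and the injectivity radius of $g$ is bounded below by a positive constant because $\ga$ has positive injectivity radius by the hypotheses of Theorem \ref{thm-global-existence-Ein}.

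Next, I would bound the curvature of $g$ using the geometric structure equations. The Gauss equation \eqref{eq-Riem-W} expresses $R_{imjn}$ as an affine function of $W_{imjn}$, $\Ew_{ij}$ and quadratic expressions in $(\Si, \eta)$. Combining \eqref{bt-Jw}, \eqref{bt-Ew-Hw}, \eqref{bt-Si-eta}, Sobolev embedding and the multiplication rule of Proposition \ref{prop-Multiplication}, one obtains uniform bounds for $\|R_{imjn}\|_{L^\infty}$ and for $\|\n R_{imjn}\|_{H_{N-1}}$ in terms of $\Ld$; these are exactly the curvature hypotheses appearing in Proposition \ref{prop-elliptic-Delta}, which I take to be the same kind of bounds required by Proposition \ref{pro-density}.

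Having verified completeness, a uniform positive injectivity radius, and the required curvature bounds, the conclusion $H_{0,k}(M) = H_k(M)$ for $k \leq N+2$ follows verbatim from Proposition \ref{pro-density}. There is no serious obstacle in this argument; the only subtle point is that the bounds must be uniform across $(M, g)$ and independent of the time slice, but this is automatic from the bootstrap assumptions, which are themselves uniform in $t$ and in $x$.
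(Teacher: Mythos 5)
Your overall route is the same as the paper's: Corollary \ref{coro-density} is deduced by invoking Proposition \ref{pro-density}. In the paper this is essentially a one-line deduction, because the only hypothesis of Proposition \ref{pro-density} that the bootstrap must supply is $g-\ga\in H_{N+2}(M,g)$, which is precisely \eqref{bt-g}; the remaining standing hypotheses (completeness, Ricci curvature bounded from below, volume non-collapsing) were already checked in the paragraph preceding the corollary, via the Gauss equation \eqref{Gauss-Ricci-hat-k} and the equivalence of $g$ and $\ga$. The mechanism inside Proposition \ref{pro-density} is to transfer the density question to the background $(M,\ga)$ through the equivalence of the norms $H_k(M,g)$ and $H_k(M,\ga)$ for $k\le N+2$, and then to apply Hebey's density theorem (Proposition \ref{prop-density}) on $(M,\ga)$, where positive injectivity radius and boundedness of $|\nabla^j R_{mn}|$ hold automatically because $\ga$ is Einstein. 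You instead set out to verify Hebey-type hypotheses for the evolving metric $g$ itself: the bounds on $\|R_{imjn}\|_{L^\infty}$ and $\|\nabla R_{imjn}\|_{H_{N-1}}$ (these are in fact the hypotheses of Proposition \ref{prop-elliptic-Delta}, not of Proposition \ref{pro-density}) and a positive lower bound on the injectivity radius of $(M,g)$.

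The one step that does not stand as written is the injectivity radius claim: uniform equivalence of $g$ and $\ga$ as bilinear forms (indeed even $C^1$ or $C^2$ closeness) does not by itself transfer a lower bound on the injectivity radius. To conclude $\mathrm{inj}(M,g)>0$ along your route you would additionally need, e.g., the curvature bound you derived together with the volume non-collapsing of $(M,g)$ and a Cheeger--Gromov--Taylor type argument. This extra work is avoidable: in the paper no injectivity radius bound for $g$ is ever used, since the density statement for $(M,g)$ is obtained from the density theorem on the fixed background $(M,\ga)$ via norm equivalence; what the bootstrap must provide is only \eqref{bt-g}, i.e.\ the hypothesis $g-\ga\in H_{N+2}(M,g)$ of Proposition \ref{pro-density} (you do use \eqref{bt-g}, but only to get metric equivalence, rather than flagging it as the hypothesis that makes the proposition applicable), together with the Ricci lower bound and non-collapsing already established before the corollary.
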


Under the bootstrap assumptions \eqref{bt-Si-eta}--\eqref{bt-Jw}, we know from \eqref{eq-Riem-W} that the Riemann tensor of $g$ has the following bound\[ \| R_{imjn} + \frac{1}{2} (g \odot g)_{imjn} \|_{H_{N}} < C, \] and hence
\ali{Reim-bound}{
 \|R_{imjn}\|_{L^\infty} + \|\nabla R_{imjn} \|_{H_{N-1}} < C, \quad N > \frac{n}{2}.
}
As a consequence of Corollary \ref{coro-density} and the boundness of Riemann tensor \eqref{E:Reim-bound}, we have
\begin{corollary}\label{prop-elliptic-Delta-1}
The conclusion of Proposition \ref{prop-elliptic-Delta} holds for any $\Psi \in H_{k}(M)$, $k \leq N+2$, provided the bootstrap assumptions \eqref{bt-Si-eta}--\eqref{bt-Jw}.
\end{corollary}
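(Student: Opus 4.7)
The plan is a straightforward extension of Proposition \ref{prop-elliptic-Delta} from compactly supported tensors to all of $H_k(M)$ via density, after verifying that the structural hypotheses of that proposition are satisfied under the bootstrap regime.

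First I would check that the background hypotheses of Proposition \ref{prop-elliptic-Delta} hold. The estimate \eqref{E:Reim-bound} records $\|R_{imjn}\|_{L^\infty} + \|\nabla R_{imjn}\|_{H_{N-1}} < C$ under \eqref{bt-Si-eta}--\eqref{bt-Jw}, which supplies the curvature bounds required by the proposition. The volume non-collapsing condition \eqref{volum-non-collapsing} and the lower bound on $\operatorname{Ric}[g]$ needed for Proposition \ref{prop-Sobolev} have already been verified in the discussion immediately preceding the corollary, using $g \sim \gamma$ from \eqref{bt-g} (so that $(M,g)$ inherits these from $(M,\gamma)$) together with the Gauss equation \eqref{Gauss-Ricci-hat-k}. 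Consequently, Proposition \ref{prop-elliptic-Delta} is applicable and gives
$$
\|\Phi\|_{H_k}^2 \lesssim \|\Phi\|_{H'_k}^2 \qquad \text{for every } \Phi \in C^\infty_c(M),\ k \leq N+2.
$$

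Next I would promote this to arbitrary $\Psi \in H_k(M)$ by density. Corollary \ref{coro-density} guarantees that under \eqref{bt-Si-eta}--\eqref{bt-Jw} one has $H_{0,k}(M) = H_k(M)$ for $k \leq N+2$, so there exists a sequence $\Psi_j \in C^\infty_c(M)$ with $\Psi_j \to \Psi$ in $H_k(M)$. To pass to the limit I also need the easy reverse inequality $\|\Phi\|_{H'_k} \lesssim \|\Phi\|_{H_k}$. This direction follows by expanding each term $\nabla^{\mathring{l}}\Delta^{[l/2]}\Phi$ through repeated commutators $[\nabla_i,\nabla_j]$: the result is a linear combination of covariant derivatives of $\Phi$ of order at most $l$, with coefficients built from $R_{imjn}$ and $\nabla R_{imjn}$, which are controlled by \eqref{E:Reim-bound} together with the multiplication rules of Proposition \ref{prop-Multiplication} (using $N > n/2$). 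In particular $\{\Psi_j\}$ is Cauchy in $H'_k(M)$ and converges there to $\Psi$.

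Finally I would take the limit: applying the compact-support version of the inequality to each $\Psi_j$ gives $\|\Psi_j\|_{H_k}^2 \lesssim \|\Psi_j\|_{H'_k}^2$, and sending $j \to \infty$ using $\|\Psi_j\|_{H_k} \to \|\Psi\|_{H_k}$ and $\|\Psi_j\|_{H'_k} \to \|\Psi\|_{H'_k}$ yields the claim. The only substantive step is the reverse norm equivalence $\|\cdot\|_{H'_k} \lesssim \|\cdot\|_{H_k}$, but this is routine commutator algebra once the curvature bound \eqref{E:Reim-bound} is in hand; no new ingredients are needed beyond Propositions \ref{prop-Sobolev}--\ref{prop-elliptic-Delta} and Corollary \ref{coro-density}.
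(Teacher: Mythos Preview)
Your proposal is correct and matches the paper's approach exactly: the paper simply records the corollary as ``a consequence of Corollary \ref{coro-density} and the boundedness of Riemann tensor \eqref{E:Reim-bound},'' and you have spelled out precisely that density-plus-limit argument. One minor simplification: the reverse inequality $\|\cdot\|_{H'_k}\lesssim\|\cdot\|_{H_k}$ needs no commutators or curvature bounds, since $\nabla g=0$ makes $\nabla^{\mathring{l}}\Delta^{[l/2]}\Phi$ a pure contraction of $\nabla^l\Phi$ with constant depending only on $n$.
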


\subsection{Estimates (without the top order derivative) for $ \eta $}
\begin{lemma}\label{prop-energy-estimate-eta-low}
Under the bootstrap assumptions \eqref{bt-Si-eta}--\eqref{bt-Jw}, we have
\[ t^2 \|\eta\|^2_{H_{k}}   \lesssim \varepsilon^2  I_{k+1}^2 + \varepsilon^4 \Lambda^4, \quad k \leq N+1. \]
\end{lemma}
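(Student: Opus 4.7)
The transport equation $\partial_\tau\eta+\eta=\eta^2+\tfrac{1}{n}|\Sigma|^2$ is an ODE in $\tau$ (no spatial transport) with a favorable damping coefficient $+1$ on the left-hand side and purely quadratic source on the right-hand side. The plan is a weighted $L^2$ energy estimate at each spatial order $k\leq N+1$: compute $\partial_\tau\!\left(t^2\|\nabla^k\eta\|^2\right)$ and exploit the exact cancellation between $\partial_\tau t^2=2t^2$ (producing $+2t^2\|\nabla^k\eta\|^2$) and the linear term $-\eta$ in the equation (producing $-2t^2\|\nabla^k\eta\|^2$ after pairing with $\nabla^k\eta$). What remains is at least quadratic in $(\Sigma,\eta)$ and enjoys good decay via the bootstrap, so integration in $\tau$ produces the claimed bound. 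The argument is inductive on $k$ because of spatial-derivative commutators.

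Concretely, applying $\nabla^k$ to the equation, pairing with $\nabla^k\eta$, and using $\partial_\tau d\mu_g=-n\eta\,d\mu_g$, $\partial_\tau g^{ij}=2\eta g^{ij}+2\Sigma^{ij}$, and $\partial_\tau\Gamma\sim\nabla\Sigma+\nabla\eta$ (whence $[\partial_\tau,\nabla^k]\eta$ is schematically $\sum_{a+b=k-1}(\nabla^a\Sigma+\nabla^a\eta)\ast\nabla^{b+1}\eta$), one arrives, after the cancellation, at
\begin{align*}
\partial_\tau\!\left(t^2\|\nabla^k\eta\|^2\right) = 2t^2\!\!\int_M\!\nabla^k\eta\cdot\nabla^k\!\left(\eta^2+\tfrac{1}{n}|\Sigma|^2\right)d\mu_g + 2t^2\!\!\int_M\!\nabla^k\eta\cdot[\partial_\tau,\nabla^k]\eta\,d\mu_g + \text{l.o.t.},
\end{align*}
where l.o.t.\ collects the cubic contributions from $\partial_\tau g^{ij}$ that enter $|\cdot|_g$ and from $\partial_\tau d\mu_g$. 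Using Proposition \ref{prop-Multiplication} and the embedding $H_N\hookrightarrow L^\infty$ (valid since $N>n/2$), the bootstrap bounds $\|\Sigma\|_{H_{N+1}},\|\eta\|_{H_{N+2}}\leq\varepsilon\Lambda t^{\delta-1}$ give $\|\Sigma\|_{L^\infty},\|\eta\|_{L^\infty}\lesssim\varepsilon\Lambda t^{\delta-1}$, so the dominant quadratic contribution satisfies
\[
\Big|\!\int_M\!\nabla^k\eta\cdot\nabla^k|\Sigma|^2\,d\mu_g\Big|\lesssim\|\nabla^k\eta\|\,\|\Sigma\|_{H_k}\|\Sigma\|_{L^\infty}\lesssim\varepsilon^2\Lambda^2 t^{2\delta-2}\|\nabla^k\eta\|,
\]
while the $\nabla^k(\eta^2)$ term and the commutator admit bounds of the same shape.

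Setting $X_k:=t^2\|\nabla^k\eta\|^2$, the above yields $\partial_\tau X_k\lesssim\varepsilon\Lambda\,t^{\delta-1}X_k+\varepsilon^2\Lambda^2\,t^{2\delta-1}X_k^{1/2}$. Since $\int_{\tau_0}^\tau t'^{\delta-1}\,d\tau'=\int_{t_0}^t t'^{\delta-2}\,dt'$ and $\int_{\tau_0}^\tau t'^{2\delta-1}\,d\tau'=\int_{t_0}^t t'^{2\delta-2}\,dt'$ are both uniformly bounded (as $\delta<1/2$), Gronwall combined with direct integration gives $X_k(\tau)\lesssim X_k(\tau_0)+\varepsilon^4\Lambda^4\lesssim\varepsilon^2I_{k+1}^2+\varepsilon^4\Lambda^4$, the initial value being controlled through \eqref{intro-initial-data}. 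The principal subtlety is the commutator $[\partial_\tau,\nabla^k]\eta$: it involves up to $k$ spatial derivatives of both $\Sigma$ and $\eta$, which forces one to proceed by induction on $k$, invoking the bootstrap on $\|\Sigma\|_{H_{N+1}}$ at the current level together with the previously established lower-order bound on $\eta$. This also explains the restriction to $k\leq N+1$: the analogous estimate at the top order $k=N+2$ would require control of $\|\Sigma\|_{H_{N+2}}$, which lies outside the bootstrap hierarchy and must be handled by a separate argument.
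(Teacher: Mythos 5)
Your proposal is correct and follows essentially the same route as the paper: an $H_k$ energy estimate for the transport equation \eqref{eq-evolution-eta}, exploiting the damping term (equivalently the weight $t^2$) and the fact that the quadratic source $\eta^2+\tfrac1n|\Sigma|^2$ costs no extra regularity, then closing with the bootstrap bounds and Gr\"onwall. The only difference is that you spell out the commutator and measure/metric variation terms (and invoke an induction that is not really needed, since the bootstrap already controls $\|\Sigma\|_{H_{N+1}}$ and $\|\eta\|_{H_{N+2}}$ directly), while the paper absorbs these into the same schematic bound; your closing remark about why $k\leq N+1$ is exactly the paper's reason for treating the top order of $\eta$ separately.
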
 

\begin{proof}
The transport equation of $\eta$ \eqref{eq-evolution-eta} involves only terms with the same regularity as $\eta$, like $\eta^2$, $|\Sigma|^2$,  on the right side.
This structure costs no extra regularities in the estimates. In fact, we deduce for $k \leq N+1$, 
\alis{ 
\p_{\tau} \|\eta\|_{H_{k}}^2 +2 \|\eta\|_{H_{k}}^2 & \lesssim \left( \|\eta\|_{L^\infty} +  \|\Sigma\|_{L^\infty} \right) \|\eta\|_{H_{k}}^2 \\
&+\left(  \|\eta^2\|_{H_{N}}  +  \|\Sigma * \Sigma\|_{H_{N}}   \right)  \|\eta\|_{H_{k}}.
}
With the bootstrap assumptions \eqref{bt-Si-eta}--\eqref{bt-Jw}, we obtain
\alis{ 
\p_{t} (t^2 \|\eta\|_{H_{k}}^2)  & \lesssim \varepsilon \Lambda t^{-2+ \delta}  \cdot t^2 \|\eta\|^2_{H_{k}}+   \varepsilon^2 \Lambda^2 t^{-2+2 \delta}   \cdot t \|\eta\|_{H_{k}},
}
which concludes this lemma by the Gr\"{o}nwall's inequality.
\end{proof}

\subsection{ Estimates for $W$}
\subsubsection{Zero-order estimates for $W$ and $W-W[\ga]$}

\begin{corollary}\label{coro-Jw-W-0} 
Under the bootstrap assumptions \eqref{bt-Si-eta}--\eqref{bt-Jw}, the following estimates hold
\begin{subequations}
\begin{align}
  \|W \|^2 \lesssim {}&  I_2^2 +   \vep^2 \Ld^2,   \label{ee-W-0} \\
 \|W  - W[\ga]\|^2 \lesssim {}& \vep^2 I_2^2 + \vep^2 \Ld^2. \label{ee-W-W0-0}
\end{align}
\end{subequations}
\end{corollary}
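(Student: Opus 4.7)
My plan is to base both bounds on the transport equation \eqref{eq-pt-W} for the spatial Weyl tensor $W$, which expresses $\lie_{\dtau} W$ as a principal part $-\nabla \Hw + 2\,\Ew\odot g + \tfrac{3}{n-2}\,\dive\Hw\odot g$ plus a quasilinear coupling $\Si\ast W$ and lower-order forcings built out of $(\Si,\eta)$. The key structural point is that, unlike $\Ew$ and $\Hw$, $W$ carries no favourably-signed linear damping in its evolution---this is precisely the non-decaying character of the spatial Weyl tensor emphasised in the introduction---so at best one expects a uniform-in-$t$ bound for $\|W\|$; fortunately, the entire right-hand side is driven by quantities that decay like $\vep\Ld\,t^{-1+\de}$ under the bootstrap \eqref{bt-Si-eta}--\eqref{bt-Ew-Hw}, and this is what makes both estimates close.

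For the first inequality I would pair \eqref{eq-pt-W} with $W$ and integrate over $(M,g)$, invoking Corollary \ref{coro-density} to justify the $L^2$ manipulations on the noncompact manifold. Cauchy--Schwarz bounds the principal contributions $|\int W\cdot\nabla\Hw\,\di\mu_g|$ and $|\int W\cdot(\Ew\odot g)\,\di\mu_g|$ by $\|W\|\cdot\vep\Ld\,t^{-1+\de}$, using the bootstrap estimates $\|\Hw\|_{H_1}\leq\|\Hw\|_{H_N}\leq\vep\Ld\,t^{-1+\de}$ and the analogous bound for $\|\Ew\|$. The quasilinear term gives $|\int W\cdot(\Si\ast W)\,\di\mu_g|\lesssim\|\Si\|_{L^\infty}\|W\|^2$, and the Sobolev embedding $\|\Si\|_{L^\infty}\lesssim\|\Si\|_{H_{N+1}}$ combined with \eqref{bt-Si-eta} upgrades the weak $C^0$ bootstrap to the decaying bound $\|\Si\|_{L^\infty}\lesssim\vep\Ld\,t^{-1+\de}$; the remaining cubic and quartic forcings contribute at worst $\vep^2\Ld^2 t^{-2+2\de}$. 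Since $\de<1$, both $\int^\infty t^{-1+\de}\,\di\tau$ and $\int^\infty t^{-2+2\de}\,\di\tau$ are finite (recall $\di\tau=\di t/t$), so Grönwall applied to the resulting ODE inequality for $\|W\|^2$, combined with the initial bound $\|W_0\|^2\lesssim I_2^2$, yields $\|W\|^2\lesssim I_2^2+\vep^2\Ld^2$.

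The second inequality follows by running the same argument on the difference $V:=W-W[\ga]$. Since $W[\ga]$ is time-independent, $V$ satisfies the same evolution equation \eqref{eq-pt-W}; after the splitting $\Si\ast W=\Si\ast V+\Si\ast W[\ga]$ I pick up one extra linear-in-$V$ forcing $\Si\ast W[\ga]$ of size $\vep\Ld\,t^{-1+\de}\,\|W[\ga]\|$, which integrates to $O(\vep\Ld\,I_2)$. The new ingredient is the much smaller initial datum $\|V(t_0)\|^2\lesssim\vep^2$, obtained from \eqref{intro-initial-data} via the Gauss equation \eqref{Gauss-Riem-hat-k} and the relation \eqref{E:relat-W-Jw} between $W$ and $\Jw$, exactly as recorded in the remark below the bootstrap assumptions. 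Grönwall then delivers $\|V\|^2\lesssim\vep^2 I_2^2+\vep^2\Ld^2$. The main obstacle I foresee is precisely the quasilinear coupling $\Si\ast W$ in \eqref{eq-pt-W}, which prevents a pure forcing argument and forces a Grönwall step; it is essential here that the Sobolev upgrade makes $\|\Si\|_{L^\infty}\,\di\tau=\|\Si\|_{L^\infty}\,\di t/t$ integrable at infinity, so that the Grönwall exponential stays uniformly bounded in $t$.
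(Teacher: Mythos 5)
Your proposal is correct in substance, but it takes a genuinely different route from the paper. The paper never estimates $W$ directly from its transport equation at this stage: it works with the coupled system \eqref{trans-HJw-Hw}--\eqref{trans-HJw-Jw}, multiplies by $\tfrac{4(n-3)}{n-2}\Hw$ and $2\Jw$, and exploits two algebraic facts --- the principal derivative terms combine into the exact divergence $4\nabla_i\big(\Hw_{jqp}\Jw^{ipjq}\big)$, and $(\dive\Hw\odot g)_{ipjq}\Jw^{ipjq}=0$ because $\Jw$ is trace-free --- so that the coupled energy $\|\Jw\|^2+\tfrac{2(n-3)}{n-2}\|\Hw\|^2$ obeys a Gr\"onwall inequality with only small quadratic forcing and \emph{no} derivative of $\Hw$ is consumed; the difference bound is then obtained by directly integrating \eqref{trans-HJw-Jw} for $\Jw-\Jw_0$ (which does pay $\|\nabla\Hw\|$, exactly as you do), and both statements about $W$ follow via the algebraic relation \eqref{E:relat-W-Jw} and the data bound $\|W_0-W[\ga]\|\lesssim\vep$. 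Your version skips the detour through $\Jw$ and the divergence cancellation altogether: you pair \eqref{eq-pt-W} with $W$ (resp.\ $V=W-W[\ga]$) and bound the curl-type term by Cauchy--Schwarz using $\|\nabla\Hw\|\le\|\Hw\|_{H_N}\lesssim\vep\Ld t^{-1+\de}$, which is legitimate here precisely because $N\ge1$ and the bootstrap controls one derivative of $\Hw$ with decay; what you buy is a shorter, purely scalar Gr\"onwall argument, what you give up is the derivative-free structure that the paper's pairing provides (which is what makes the paper's scheme extendable, whereas at top order one must switch to the elliptic estimates of Lemma \ref{lemm-elliptic} --- your trick would not survive there, but it is not needed to). Two small remarks: the terms $(\dive\Hw\odot g)\cdot W$ and $(\Ew\odot g)\cdot W$ that you estimate actually vanish identically since $W$ is trace-free, so those bounds are harmless but unnecessary; and your extra forcing $\Si\ast W[\ga]$ requires treating $\|W[\ga]\|$ (or $\|W[\ga]\|_{L^\infty}$) as an initial-energy--type constant, which is consistent with the paper's own usage (e.g.\ in Proposition \ref{prop-W}), and to land exactly on $I_2^2+\vep^2\Ld^2$ you should run Gr\"onwall at the level of $\|W\|$, respectively $\|V\|$, rather than of their squares --- the same bookkeeping the paper uses for $\|\Jw-\Jw_0\|$.
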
 

\bpf
We will first verify the following estimates for $\Jw$
\begin{subequations}
\begin{align}
 \|\Jw \|^2 \lesssim {}&  I_2^2 +   \vep^2 \Ld^2,   \label{ee-Jw-0}\\
  \|\Jw - \Jw_0\|^2 \lesssim {}& \vep^2 I_2^2 + \vep^2 \Ld^2, \label{ee-Jw-Jw0-0}
\end{align}
\end{subequations}
which then indicates \eqref{ee-W-0}--\eqref{ee-W-W0-0} with the help of \eqref{E:relat-W-Jw}.

We multiply $\frac{4(n-3)}{n-2} \Hw$ and $2\Jw$ on both sides of the transport system for $(\Hw, \, \Jw)$ \eqref{trans-HJw-Hw}--\eqref{trans-HJw-Jw}, and notice that, 
\alis{
 & \frac{n-2}{n-3} \frac{4(n-3)}{n-2}  \nabla^l \Jw_{l j p i} \Hw^{pij} + 2 \( \nabla_i \Hw_{j q p} - \nabla_p \Hw_{j q i} \) \Jw^{ipjq} \\
 & =  4 \nabla^l \Jw_{l j p i} \Hw^{pij} + 4 \nabla_i \Hw_{j q p}  \Jw^{ipjq} = 4 \nabla_i \( \Hw_{j q p}  \Jw^{ipjq} \),
}
and since $\Jw_{ipjq}$ is a Weyl tensor
\[  -  \frac{1}{n-2}(\dive \Hw \odot g)_{ipjq} \cdot \Jw^{ipjq} = 0. \]
Putting all these together leads to
\alis{
& \p_t \( \|\Jw\|^2 + \frac{2(n-3)}{n-2}  \|\Hw\|^2 \) + (n-1) \frac{4(n-3)}{n-2} t^{-1} \|\Hw\|^2 \\
\lesssim {}& t^{-1} \( \|\Si\|_{L^\infty} +n \|\eta\|_{L^\infty} \)  \(\|\Hw\|^2 +  \|\Jw\|^2 \) +  t^{-1} \|\Si\|_{L^\infty} \| \Ew \| \|\Jw\|   \\ 
\lesssim {}& \vep \Ld t^{-2+\de} \( \|\Hw\|^2 + \|\Jw\|^2 \) + \vep^2 \Ld^2 t^{-3+2\de} \| \Jw \|, 
}
where the bootstrap assumptions \eqref{bt-Si-eta}--\eqref{bt-Jw} are used in the estimates.
An application of the Gr\"{o}nwall's inequality yields
\[\|\Hw\|^2 + \|\Jw \|^2 + (n-1) \frac{4(n-3)}{n-2} \int_{t_0}^t t^{\prime -1} \|\Hw\|^2 \di t' \lesssim  I_2^2 + \vep^2 \Ld^2, \] 
and this justifies \eqref{ee-Jw-0}. Meanwhile, \eqref{trans-HJw-Jw} can be alternatively written as
\alis{
 \dtau \(  \Jw- \Jw_0 \)_{ipjq} = {}& \nabla_i \Hw_{j q p} - \nabla_p \Hw_{j q i} -  \frac{1}{n-2}(\dive \Hw \odot g)_{ipjq} \\
 &  + ( \Si, \, \eta ) \ast \Jw + (\eta, \, \Si) \ast \Ew,
}
which gives
\alis{
\|\Jw - \Jw_0\| \lesssim {}& \int_{t_0}^t t^{\prime -1} \( \|\n \Hw \| + \( \|\Si\|_{L^\infty} + \|\eta\|_{L^\infty} \)  \( \| \Jw\| +  \Ew\| \) \) \di t' \\
\lesssim {}&  \int_{t_0}^t \vep \Ld t^{\prime -2+\de} \, \di t' \lesssim \vep \Ld.
}
That is, we conclude \eqref{ee-Jw-Jw0-0}.

With the transport system for $\(\Hw, \, W\)$ \eqref{trans-HJw-Hw} and \eqref{eq-pt-W}, the estimate \eqref{ee-W-0} can be verified in an analogous way.
Moreover, due to \eqref{E:relat-W-Jw} and the bootstrap assumptions \eqref{bt-Si-eta}--\eqref{bt-Jw}, the estimate \eqref{ee-W-W0-0} follows from \eqref{ee-Jw-Jw0-0}.
\epf

\subsubsection{Higher-order estimates for $ W$ and $ W-W_0$}

\bg{lemma}\label{lemm-elliptic}
Fix an integer $N > \frac{n}{2}$. Suppose on an $n$-dimensional Riemannian manifold $(M, \, g)$, the curvature tensor is bounded \[ \|R_{imjn} \|_{L^\infty} + \|\nabla R_{ipjq} \|_{H_{N-1}} < C. \] 

Let $\Ph_{imjn}$ be a $(0,4)$-tensor on $M$ with compact support such that 
\beq\label{antisymm-Ph}
\Ph_{imjn}=-\Ph_{mijn}. 
\eeq
 If $\Ph$ satisfies the following elliptic system
\begin{align}\label{eq-elliptic-Ph} 
\n^p \Ph_{pimj} ={}& B_{imj}, \nnb \\
\n_{[p} \Ph_{im]jn} ={}& A_{pimjn},
\end{align}
 then it holds that
\beq\label{elliptic-est-Ph} 
\|\Ph\|_{H_{k}}  \lesssim \|A\|_{H_{k-1}} + \|B\|_{H_{k-1}} + \|\Ph\|, \quad 1 \leq k \leq N+2.
\eeq

Similarly, if  $\Ps_{ij}$ is a symmetric and trace-free $(0, 2)$-tensor on $M$ with compact support, and satisfies 
\ali{div-curl-system}{
\n^i \Ps_{ij} & =B_j, \nnb\\
\n_i \Ps_{j k} - \n_j \Ps_{i k} &= A_{ijk},
} 
then the following estimates hold,
\beq\label{elliptic-est-Ps} 
\|\Ps\|_{H_{k}}  \lesssim \|A\|_{H_{k-1}} + \|B\|_{H_{k-1}} + \|\Ps\|, \quad 1 \leq k \leq N+2.
\eeq
\ed{lemma}
The proof of Lemma \ref{lemm-elliptic} is collected in Appendix \ref{sec-proof-lem-ellip}.

Since the spatial Weyl tensor $W$ satisfies the elliptic system \eqref{eq-div-W}--\eqref{eq-curl-W}, we can apply the elliptic estimates in Lemma \ref{lemm-elliptic} which are combined with the density corollary \ref{coro-density} to demonstrate the following estimates for $W$ and $W-W_0$.
\bg{proposition}\label{prop-W}
The bootstrap assumptions \eqref{bt-Si-eta}--\eqref{bt-Jw} suggest that
\begin{subequations}
\begin{align}
  \|W\|_{H_N} \lesssim {}&  I_{N+2} + \vep  \Ld, \label{ee-W-N} \\
 \|W  - W[\ga] \|_{H_N} \lesssim {}& \vep \Ld, \label{ee-W-W0-N}
\end{align}
\end{subequations}
for $N > \frac{n}{2}$.
\ed{proposition}

\bpf
Applying Lemma \ref{lemm-elliptic} to the elliptic system \eqref{eq-div-W}--\eqref{eq-curl-W} yields
\alis{
\|W \|_{H_N}  
  \lesssim {}& \|W\|  + \|\Ew\|_{H_N} + \|\Si \|_{H_N} + \|\eta \|_{H_N} + \|\Si \|^2_{H_N} + \|\eta \|^2_{H_N}  \nnb \\ 
\lesssim {}& I_{2} + \vep  \Ld, \quad  \text{by} \,\, \eqref{ee-W-0}.
}
This concludes \eqref{ee-W-N}.

To prove \eqref{ee-W-W0-N}, we first note that the Weyl tensor $W[\ga]$ of the Einstein manifold $(M, \, \ga)$ obeys
\begin{subequations}
\begin{align}
\n[\ga]^l W[\ga]_{ljpi} ={}& 0, \label{elliptic-W[ga]-1} \\
  \nabla[\ga]_{[p} W[\ga]_{i m] j n} ={}& 0, \label{elliptic-W[ga]-2}
\end{align}  
\end{subequations}
 where $\n[\ga]$ denotes the covariant derivative with respect to $\ga$.
Due to the fact \[ \n[\ga] W[\ga] - \n W[\ga] = \n \ga \ast W[\ga] = \n (g-\ga) \ast W[\ga], \] we infer from the systems \eqref{eq-div-W}--\eqref{eq-curl-W} and \eqref{elliptic-W[ga]-1}--\eqref{elliptic-W[ga]-2} the following elliptic system for $ W-W[\ga]$,
\begin{align*}
  \nabla^l \( W-W[\ga] \)_{l j p i}  ={} & \pm \nabla \Ew \pm \n \Si \pm  \n  \eta  + \n \Si \ast (\Si, \, \eta) + \n \eta \ast (\Si, \, \eta)\\
  & +  \n (g-\ga) \ast W[\ga] + (g-\ga) \ast \n[\ga] W[\ga], \\
 \nabla_{[p} \( W- W[\ga] \)_{i m] j n}  ={} &  \pm \nabla \Ew \pm \n \Si \pm  \n  \eta  + \n \Si \ast (\Si, \, \eta) + \n \eta \ast (\Si, \, \eta)\\
   & + \n (g-\ga) \ast W[\ga].
\end{align*}
Applying Lemma \ref{lemm-elliptic} to the above elliptic system, we obtain
\ali{ee-elliptic-W-W[ea]}{
& \|W-W[\ga] \|_{H_N} \lesssim \|W-W[\ga]\|  + \|\Ew\|_{H_N} + \|\Si \|_{H_N} + \|\eta \|_{H_N} + \|\Si \|^2_{H_N}   \nnb \\ 
& \quad + \|\eta \|^2_{H_N}  + \| \n (g-\ga) \ast W[\ga] \|_{H_{N-1}} + \| (g-\ga) \ast \n[\ga] W[\ga] \|_{H_{N-1}}.
}
Note that  by Proposition \ref{prop-Multiplication}
\alis{ 
& \| \n (g-\ga) \ast W[\ga] \|_{H_{N-1}} + \| (g-\ga) \ast \n[\ga] W[\ga] \|_{H_{N-1}} \\
  \lesssim {}&  \| \n (g-\ga)   \|_{H_{N-1}} \| W[\ga] \|_{H_{N}} + \| (g-\ga) \|_{H_{N}} \| \n[\ga] W[\ga] \|_{H_{N-1}}.
}
Then combined with the above inequality, 
 \eqref{ee-W-W0-0} and  the bootstrap assumptions \eqref{bt-Si-eta}--\eqref{bt-Jw}, the estimate \eqref{E:ee-elliptic-W-W[ea]} is further sharpen as
\alis{
\|W-W[\ga] \|_{H_N} \lesssim {}&  \vep I_{N+2} + \vep \Ld + \vep \Ld \( \| W[\ga] \|_{H_{N}} +  \| \n[\ga] W[\ga] \|_{H_{N-1}} \).
}
Moreover, due to the bootstrap assumption  \[ \|g - \gamma \|_{H_{N+2}} \leq \vep \Ld, \quad N > \frac{n}{2}, \] the two norms $\| \cdot \|_{H^k}$ and $\| \cdot \|_{H^k(M, \, \gamma)}$, $k \leq N+2$, are equivalent (see the proof leading to Proposition \ref{pro-density}).
It then follows that
\alis{
\|W-W[\ga] \|_{H_N} \lesssim {}&  \vep I_{N+2} + \vep \Ld + \vep \Ld \| W[\ga] \|_{H_{N} (M, \, \ga)} \\
\lesssim {}& \vep \Ld.
}

\epf

We next proceed to the energy estimates for  $\Ew$ and $\Hw$, based on the hyperbolic system \eqref{trans-EHw-Ew}--\eqref{trans-EHw-Hw}.

\subsection{Estimates for $\Ew$ and $\Hw$}
In this section, we aim to prove the following proposition.
\bg{proposition}\label{pro-est-Ew-Hw}
We have 
\alis{
 & t^2 \|\Ew\|^2_{H_{N}}  + t^2 \|\Hw\|^2_{H_{N}} \lesssim  \vep^2 I_{N+2}^2  + \varepsilon \Ld \( \vep I_{N+2} + \vep^2 \Ld^2\) t^{2 \delta},
 }
provided the bootstrap assumptions \eqref{bt-Si-eta}--\eqref{bt-Jw}.

\ed{proposition}

To develop an approach of energy estimates for the hyperbolic system of Maxwell type \eqref{trans-EHw-Ew}--\eqref{trans-EHw-Hw} on a spacetime foliated by spatially Riemannian manifolds with negative curvature, the following lemma plays a crucial role.
\begin{lemma}\label{lemma-div-curl}
 Let  $k \in \mathbb{Z}$, $k \geq 0$, and $E_{ij}$ be a symmetric, trace-free $(0, 2)$-tensor on $M$, $H_{ijl}$ be a $(0, 3)$-tensor on $M$ satisfying 
 \beq\label{cyclic-H}
 H_{ijl}=-H_{jil}, \quad H_{ijl} g^{jl}=0, \quad H_{[ijl]}=0.
 \eeq  
 Then the following identity holds,
\ali{identity}{
&   \nabla^{\mathring{k}} \Delta^{ [\frac{k}{2}] } \n^p H_{p j i} \cdot \nabla^{\mathring{k}} \Delta^{ [\frac{k}{2}] } E^{ij}  + \nabla^{\mathring{k}} \Delta^{ [\frac{k}{2}] }  \n_p E_{i j}  \cdot \nabla^{\mathring{k}} \Delta^{ [\frac{k}{2}] } H^{p i j}  \nnb\\
=& \sum_{0 \leq l \leq k}  \nabla^p    \left( \nabla^{\mathring{l}} \Delta^{ [\frac{l}{2}] } H_{p (i j)}  * \nabla^{\mathring{l}} \Delta^{ [\frac{l}{2}] } E^{i j}  \right) \nnb \\
& - \sum_{0 \leq m <k} C_k^m (n-3)^{k-m} \nabla^{\mathring{m}} \Delta^{ [\frac{m}{2}] } \n^p H_{p (i j)} \cdot \nabla^{\mathring{m}} \Delta^{ [\frac{m}{2}] } E^{i j} \nnb \\  
& +  \sum_{0 \leq l \leq k} \nabla_{I_{l-1}} ( O_{ipjq}  \ast H) *  \nabla^{\mathring{l}} \Delta^{ [\frac{l}{2}] } E +  \nabla_{I_{l-1}} ( O_{ipjq}  \ast E) *  \nabla^{\mathring{l}} \Delta^{ [\frac{l}{2}] } H, 
}
where \[ O_{imjn} : = R_{imjn} + \frac{1}{2} ( g \odot g )_{imjn} \] is the error term of the Riemann curvature $R_{imjn}$ (cf. the principle part of $R_{imjn}$ is $- \frac{1}{2} ( g \odot g )_{imjn}$). In the formula of \eqref{E:identity}, the last line vanishes when $l=0$, and moreover, if $k=0$, the last two lines are both absent. We also remark that the constants $C_k^m$ are the combinatorial numbers. 
\end{lemma}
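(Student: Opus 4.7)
The plan is to prove \eqref{E:identity} by induction on $k$. The base case $k=0$ reduces, after a single integration by parts, to
$$\nabla^p H_{pji} \cdot E^{ij} + \nabla_p E_{ij} \cdot H^{pij} = \nabla^p(H_{p(ji)}E^{ij}) + (H^{pij} - H^{pji})\nabla_p E_{ij},$$
and the last term vanishes because $H^{pij} - H^{pji}$ is antisymmetric in $(ij)$ while $\nabla_p E_{ij}$ is symmetric in $(ij)$ (since $E$ is symmetric). This yields the claimed divergence structure with no remainder, consistent with the two sums on the right-hand side being empty at $k=0$.

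For the inductive step I would pass from order $k-2$ to $k$ by applying $\Delta$ to the identity at order $k-2$ (and from $k-1$ to $k$ by applying a single $\nabla$ when $k$ is odd). The key observation is that
$$\nabla^{\mathring{k}} \Delta^{[k/2]} \nabla^p H_{pji} = \nabla^p \nabla^{\mathring{k}} \Delta^{[k/2]} H_{pji} + [\nabla^{\mathring{k}}\Delta^{[k/2]}, \nabla^p] H_{pji},$$
and similarly for the $\nabla_p E_{ij}$ factor. After commuting the divergences outward, the leading parts (those without commutator) combine by the base-case manipulation to produce exactly the divergence $\nabla^p \bigl(\nabla^{\mathring{k}}\Delta^{[k/2]}H_{p(ji)}\cdot \nabla^{\mathring{k}}\Delta^{[k/2]}E^{ij}\bigr)$ appearing in the first sum on the right of \eqref{E:identity}. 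The commutator residues are then evaluated via the Ricci identity $[\nabla_a,\nabla_b]T_{c_1\cdots c_q} = -\sum_i R_{ab}{}^{d}{}_{c_i} T_{c_1\cdots d\cdots c_q}$, and one splits them according to the decomposition $R_{imjn} = -\tfrac{1}{2}(g\odot g)_{imjn} + O_{imjn}$; the $O$-contributions are placed into the last (cubic-error) line of \eqref{E:identity}.

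The heart of the matter is the algebraic identity that the leading $-\tfrac{1}{2}(g\odot g)$-part of the commutator produces exactly the coefficient $-(n-3)$ per step. Using $(g\odot g)_{imjn}= 2(g_{ij}g_{mn} - g_{jm}g_{in})$ together with the trace conditions $H_{pij}g^{ij}=0$, $E_{ij}g^{ij}=0$ and the cyclic identity $H_{[ijl]}=0$, the many Riemann contractions acting on the three indices of $H$ and the two indices of $E$ collapse to a scalar multiple of the lower-order divergence pairing, and the surviving coefficient is precisely $n-3$ (with the expected sign). Iterating this reduction, the binomial coefficients $C_k^m$ arise from Pascal-rule bookkeeping of how the extra $\Delta$ can act on the $[k/2]$ Laplacians already present together with the two tensor factors. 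I expect the main obstacle to be precisely this algebraic verification: at each stage one must collect every piece produced by the Ricci commutator acting on $H$ and on $E$, and check that after imposing the symmetry, trace-free, and cyclic constraints the dimension-dependent constant is exactly $-(n-3)$, rather than a nearby expression such as $-(n-2)$ or $-(n-4)$. Once this tight identity is established, the remaining bookkeeping (assembling the binomial coefficients, collecting divergences, and absorbing all $O$-contributions into the cubic error) is routine.
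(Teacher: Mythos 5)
Your proposal follows essentially the same route as the paper's own proof: the $k=0$ case via the symmetry/antisymmetry cancellation, then induction on the derivative order by commuting $\nabla^{\mathring{k}}\Delta^{[\frac{k}{2}]}$ past the divergence with the Ricci identity, splitting the curvature into its principal part $-\tfrac12(g\odot g)$ plus the error $O$, extracting the dimension constant from the trace-free, cyclic and antisymmetry constraints on $H$ together with the symmetry and trace-freeness of $E$, and assembling the binomial coefficients by Pascal's rule, which is exactly the two-track (even/odd order) induction carried out in the paper's appendix. The single step you defer, namely that the principal curvature contractions collapse to precisely $-(n-3)$ times the lower-order pairing $\nabla^{p}H_{p(ij)}\cdot E^{ij}$ (a genuine non-divergence leftover, which is why the second sum appears in the identity), is verified in the paper's explicit $k=1$ computation and works exactly as you anticipate.
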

We postpone the proof of Lemma \ref{lemma-div-curl} to Appendix \ref{sec-id-Bianchi}.

\subsubsection{Zero-order estimates for $\Ew$ and $\Hw$}\label{sec-ee-EH-0}
First of all, we need to establish an energy identity for $\Ew$ and $\Hw$. For this purpose, we multiply $4\Ew$ and $2\Hw$ on  the hyperbolic system \eqref{trans-EHw-Ew}--\eqref{trans-EHw-Hw} and note that \eqref{E:identity} with $k=0$ yields
\beq\label{spatial-div-0} 
\n^p H_{p j i}  E^{ij}  +    \n_p E_{i j}  H^{p i j} =  \nabla^p    \left(  H_{p (i j)}    E^{i j}  \right).
\eeq 
It then follows that
\ali{energy-id-EwHw-0}{
& \p_t \( 2 t^2\|\Ew\|^2 + t^2 \|\Hw\|^2 \) + 4 (n-3) t \|\Ew\|^2 \nnb \\
={} & \int_{M_t} 2 t \Si_{p q} W^{p i q j} \Ew_{i j} \, \di \mu_g + f_1,
}
where 
\alis{
f_1= {}& \int_{M_t} t \(\Si, \, \eta \) \ast \( \Ew \ast \Ew, \, \Hw \ast \Hw \) \di \mu_g + \int_{M_t} t \Si \ast \Si \ast \Si \ast \Ew \,  \di \mu_g.
}
The bootstrap assumptions \eqref{bt-Si-eta}--\eqref{bt-Jw} enable us to bound $f_1$ as below,
\[  |f_1| \lesssim  \vep^3 \Ld^3 t^{-2+3\de}. \]

In the sequel, to treat the leading nonlinear term \[ \int_{M_t} 2 t \Si_{p q} W^{p i q j} \Ew_{i j} \, \di \mu_g, \] we have to find out more hidden structures in the Einstein equations. Here we appeal to the geometric structure equation \eqref{eq-evolution-Si}, \[ \lie_{\p_t} ( t \Si_{i j} ) = \mathcal{L}_{\dtau} \Sigma_{ij} + \Sigma_{ij} =  \Ew_{i  j } - \Sigma_{ip} \Sigma_{j}^p -  \frac{1}{n}   |\Sigma|^2 g_{ij}, \] which allows us to replace $\Ew_{ij}$ by $\lie_{\p_t} ( t \Si_{i j} ) + \text{l.o.q.}$. As a consequence,
\alis{
 & \int_{M_t}  2 t \Si_{p q} W^{p i q j} \Ew_{i j} \, \di \mu_g \\
 ={}&  \int_{M_t} 2 t \Si_{p q} W^{p i q j} \( \lie_{\p_t} ( t \Si)_{i j}  + \Sigma_{i m} \Sigma_{j}^m +  \frac{1}{n}   |\Sigma|^2 g_{ij} \) \di \mu_g \\
 ={}&  \int_{M_t} \( \p_t ( t^2 \Si_{p q}   \Si_{i j} W^{p i q j} )  -  t^2 \Si_{p q}   \Si_{i j}  \lie_{\p_t}  W^{p i q j} + 2 t \Si_{p q} W^{p i q j} \Sigma_{i m} \Sigma_{j}^m \) \di \mu_g \\ 
    ={}& \p_t \int_{M_t}   t^2 \Si_{p q}   \Si_{i j} W^{p i q j} \, \di \mu_g + \int_{M_t}  t  \Si \ast   \Si  \ast \( \n \Hw, \, \Ew \)  \di \mu_g \\
    &  + \int_{M_t} t  \Si \ast   \Si \ast (\Si, \,\eta) \ast (W, \, \Si, \,\Ew, \, \Si \ast \Si) \, \di \mu_g.
} We note that \eqref{eq-pt-W} is used in the above calculations.
After integrating over $t' \in [t_0, \, t]$ and inserting the bootstrap assumptions \eqref{bt-Si-eta}--\eqref{bt-Jw}, we achieve
\alis{
&  2 t^2\|\Ew\|^2 + t^2 \|\Hw\|^2  - \int_{M_t} t^2 \Si^{p q}   \Si^{i j}  W_{ p i q j}\, \di \mu_g + 4 (n-3) \int_{t_0}^t t' \|\Ew\|^2 \, \di t' \\
\lesssim {}& \varepsilon^2 I_2^2 + \int_{t_0}^t \varepsilon^3 \Ld^3 t^{\prime -2+3 \de} \, \di t'  + \int_{t_0}^t \varepsilon^3 \Ld^3 I_{N+2} t^{\prime -2+3 \de} \,\di t'    \\
\lesssim {}& \varepsilon^2 I_2^2 +  \varepsilon^3 \Ld^3,
} which will be the desired energy inequality if we manage to demonstrate the positivity of $- \int_{M_t} t^2 \Si^{p q}   \Si^{i j}  W_{ p i q j}\, \di \mu_g$.
Since we have imposed the non-positive assumption on $W[\ga]$ (see Definition \ref{def-Non-positive-Weyl}), that is, \[ - \int_M \Si_{i j} \Si_{p q}  \ga^{i i^\prime} \ga^{j j^\prime} \ga^{p p^\prime} \ga^{q q^\prime} W[\ga]_{  i^\prime p^\prime j^\prime q^\prime} \, \di \mu_{\ga} \geq 0, \] it suffices to estimate the following error term 
\alis{
 & \Big| \int_{M }  \Si^{p q}   \Si^{i j}  W_{ p i q j}  \, \di \mu_g - \int_M \Si_{i j} \Si_{p q}  \ga^{i i^\prime} \ga^{j j^\prime} \ga^{p p^\prime} \ga^{q q^\prime} W[\ga]_{  i^\prime p^\prime j^\prime q^\prime} \, \di \mu_{\ga} \Big| \\
\lesssim {} & \|\Si\|^2  \|W-  W[\ga] \|_{L^\infty}  + \|\Si\|^2   \|g -  \ga \|_{L^\infty} \( \| W \|_{L^\infty} + \| W[\gamma] \|_{L^\infty} \) \\ \lesssim {} &  \vep \Ld   \|\Si\|^2 + \( I_{N+2}  +  \vep \Ld \)   \vep \Ld   \|\Si\|^2 \\
 \lesssim {} &  \vep \Ld   \|\Si\|^2,
 }
 where we have used the improved estimates for $\|W\|_{H_N}$ and $\|W-  W[\ga] \|_{H_N}$ \eqref{ee-W-N}--\eqref{ee-W-W0-N}. As a summary, we arrive at
 \alis{
& 2 t^2\|\Ew\|^2 + t^2 \|\Hw\|^2  +4 (n-3) \int_{t_0}^t t \|\Ew\|^2 \, \di t \\
& - \int_{M} t^2 \Si_{i j} \Si_{p q}  \ga^{i i^\prime} \ga^{j j^\prime} \ga^{p p^\prime} \ga^{q q^\prime} W[\ga]_{  i^\prime p^\prime j^\prime q^\prime} \, \di \mu_{\ga} \\
\lesssim {} &  \varepsilon^2 I_2^2 + \varepsilon^3 \Ld^3 +  \vep \Ld \|t \Si\|^2  \\
\lesssim  {} & \varepsilon^2 I_2^2  + \varepsilon^3 \Ld^3 t^{2 \de},
}
and therefore
\beq\label{esti-Ew-Hw-0}
 t^2\|\Ew\|^2 + t^2 \|\Hw\|^2  \lesssim  \varepsilon^2 I_2^2  + \varepsilon^3 \Ld^3 t^{2 \de}.
 \eeq

We conclude Proposition \ref{pro-est-Ew-Hw} with $N=0$. In what follows, we will complete the proof of Proposition \ref{pro-est-Ew-Hw} by induction. 

\subsubsection{Higher-order estimates of $\Ew$ and $\Hw$}
The inductive proof for higher-order estimates of $\Ew$ and $\Hw$ composes of three steps.

{\bf 1) Inductive assumption for $\|\Ew\|_{H_{k-1}}$ and $\|\Hw\|_{H_{k-1}}$}.
Suppose it holds that
\beq\label{induction-Ew-Hw}
\|t \Ew\|^2_{H_{k-1}} + \|t \Hw\|^2_{H_{k-1}} \lesssim \varepsilon^2 I^2_{k+1} + \varepsilon \Ld \( \vep I_{N+2} + \vep^2 \Ld^2\) t^{2 \delta},
\eeq
 we will prove the same estimate holds if we replace $k-1$ by $k$, for $k\leq N$. Notice that, the estimate \eqref{esti-Ew-Hw-0} indicates \eqref{induction-Ew-Hw} holds with $k=1$.

{\bf 2) Improved estimates for $\|\Si\|_{H_k}$}.
Based on the updated estimate for $\|\Ew\|$ \eqref{esti-Ew-Hw-0}, we can improve $\|\Si\|$ as follows.
Taking advantage of the transport equation of $\Sigma$  \eqref{eq-evolution-Si}, we have
\alis{
 \dtau \|\Sigma\|^2  +  2 \|\Sigma\|^2 &\lesssim \left( \|\eta\|_{L^\infty} +  \|\Sigma\|_{L^\infty} \right) \|\Sigma\|^2 + \|\Ew \| \|\Sigma\|.
}
It follows from the bootstrap assumptions \eqref{bt-Si-eta}--\eqref{bt-Jw} and the improved estimate \eqref{esti-Ew-Hw-0} that
\alis{ 
\p_{t} (t^2 \|\Sigma\|^2)  & \lesssim \varepsilon  \Lambda  t^{-2+ \delta} \cdot t^2 \|\Sigma\|_{L^2}^2 + \left(\varepsilon  I_{2} +  \varepsilon^{\frac{3}{2}} \Lambda^{\frac{3}{2}}   \right)  t^{-1+\de}  \cdot t \|\Sigma\|_{L^2}.
}
Then the Gr\"{o}nwall's inequality allows us to deduce
\beq\label{ee-Sigma}
t \|\Sigma\|_{L^2} \lesssim  \left(\varepsilon  I_{2} +  \varepsilon^{\frac{3}{2}} \Lambda^{\frac{3}{2}}   \right) t^{\delta}.
\eeq

Furthermore, the inductive assumption \eqref{induction-Ew-Hw} and the sharpen estimate \eqref{ee-Sigma} help us to improve $\|\Si\|_{H_k}$. Indeed, with $\|\eta\|_{H_{N+1}}$ being bounded (Lemma \ref{prop-energy-estimate-eta-low}) and $\nabla \eta$ being viewed as a source term, we regard the system \eqref{Codazzi-div-k}--\eqref{div-curl-k-hat} as an elliptic system for $\Sigma$. 
Making use of the elliptic estimates in Lemma \ref{lemm-elliptic},  we can prove that 
\ali{esti-Si}{
\| t\Sigma\|^2_{H_{k}} \lesssim {}& \| t\Sigma\|^2 + \| t\Hw\|^2_{H_{k-1}} + \|t \eta\|^2_{H_{k}}, \quad 1 \leq k \leq N, \nnb\\
\lesssim {}& \left(\varepsilon^2  I^2_{k+1}  + \varepsilon^2 \Ld I_{N+2} +  \varepsilon^3 \Lambda^3 \right) t^{2\delta} +  \varepsilon^2 I^2_{k+1}  +  \varepsilon^4 \Lambda^4\nnb \\
\lesssim {}&  \varepsilon^2 I^2_{k+1}  +  \varepsilon \Ld \( \vep I_{N+2} + \vep^2 \Ld^2\) t^{2 \delta}.
}

{\bf 3) Estimates for $\|\Ew\|_{H_{k}}$ and $\|\Hw\|_{H_{k}}$, $k\leq N$}. Before the analysis, we take $\nabla^{\mathring{k}} \Delta^{ [\frac{k}{2}] } $ derivative on the hyperbolic system of $\( \Ew,\, \Hw \)$ and obtain the higher-order equations,
\alis
{& \lie_{\dtau} \nabla^{\mathring{k}} \Delta^{ [\frac{k}{2}] }  \Ew_{i  j} +(n-2) \nabla^{\mathring{k}} \Delta^{ [\frac{k}{2}] } \Ew_{ij} + \nabla^{\mathring{k}} \Delta^{ [\frac{k}{2}] }  \nabla^p \Hw_{p (i j)} \\
={}& \nabla^{\mathring{k}} \Delta^{ [\frac{k}{2}] } \( \Si^{p q} W_{p i q j} + (\eta, \Si ) \ast \Ew  + \Si \ast \Si \ast \Si\) \\
& + \sum_{a+b=k-1} \n_{I_a} (\n \Si, \n \eta) \ast \n_{I_b} \Ew, \\
  &\lie_{\dtau} \nabla^{\mathring{k}} \Delta^{ [\frac{k}{2}] }  \Hw_{p i j} + \nabla^{\mathring{k}} \Delta^{ [\frac{k}{2}] }  \Hw_{p i j} + \nabla^{\mathring{k}} \Delta^{ [\frac{k}{2}] }   \nabla_p \Ew_{i  j } -  \nabla^{\mathring{k}} \Delta^{ [\frac{k}{2}] } \nabla_i  \Ew_{p  j } \\
  ={}& \nabla^{\mathring{k}} \Delta^{ [\frac{k}{2}] } \( \Si \ast  \Hw\) + \sum_{a+b=k-1} \n_{I_a} (\n \Si, \n \eta) \ast \n_{I_b} \Hw.
}
To do the higher--order energy estimates, we multiply $4t \nabla^{\mathring{k}} \Delta^{ [\frac{k}{2}] } \Ew^{ij}$ and $2 t \nabla^{\mathring{k}} \Delta^{ [\frac{k}{2}] } \Hw^{pij}$ on both sides of the above system. Now let us focus on the summation of spatial derivative terms for a moment, 
\alis{
& 4 t \nabla^{\mathring{k}} \Delta^{ [\frac{k}{2}] } \n^p \Hw_{p (i j)} \cdot \nabla^{\mathring{k}} \Delta^{ [\frac{k}{2}] } \Ew^{ij} + 2t  \( \nabla^{\mathring{k}} \Delta^{ [\frac{k}{2}] }   \nabla_p \Ew_{i  j } -  \nabla^{\mathring{k}} \Delta^{ [\frac{k}{2}] } \nabla_i  \Ew_{p  j } \)  \nabla^{\mathring{k}} \Delta^{ [\frac{k}{2}] } \Hw^{pij} \\
={}& 4 t \nabla^{\mathring{k}} \Delta^{ [\frac{k}{2}] } \n^p \Hw_{p (i j)} \cdot \nabla^{\mathring{k}} \Delta^{ [\frac{k}{2}] } \Ew^{ij}  + 4t \nabla^{\mathring{k}} \Delta^{ [\frac{k}{2}] }  \n_p \Ew_{i j}  \cdot \nabla^{\mathring{k}} \Delta^{ [\frac{k}{2}] } \Hw^{p i j}.
}
Notice that, when $k=0$, the above formula reduces to \eqref{spatial-div-0}. For general $k \in \mathbb{N}$, we apply Lemma \ref{lemma-div-curl} to  achieve the following identity
\ali{id-ho-maxwell}{
&  4t\( \nabla^{\mathring{k}} \Delta^{ [\frac{k}{2}] } \n^p \Hw_{p (i j)} \cdot \nabla^{\mathring{k}} \Delta^{ [\frac{k}{2}] } \Ew^{ij}  + \nabla^{\mathring{k}} \Delta^{ [\frac{k}{2}] }  \n_p \Ew_{i j}  \cdot \nabla^{\mathring{k}} \Delta^{ [\frac{k}{2}] } \Hw^{p i j} \) \nnb \\
=& \sum_{0 \leq l \leq k}  \nabla^p    \left( 4 t \nabla^{\mathring{l}} \Delta^{ [\frac{l}{2}] } \Hw_{p (i j)}  * \nabla^{\mathring{l}} \Delta^{ [\frac{l}{2}] } \Ew^{i j}  \right) \nnb \\ 
 - & 4 t\sum_{0 \leq m <k} C_k^m (n-3)^{k-m} \nabla^{\mathring{m}} \Delta^{ [\frac{m}{2}] } \n^p \Hw_{p (i j)} \cdot \nabla^{\mathring{m}} \Delta^{ [\frac{m}{2}] } \Ew^{i j} \nnb \\
 + &  \sum_{0 \leq l \leq k} \nabla_{I_{l-1}} ( O_{imjn}  \ast \Hw) *  t \nabla^{\mathring{l}} \Delta^{ [\frac{l}{2}] } \Ew +  \nabla_{I_{l-1}} ( O_{imjn}  \ast \Ew) * t \nabla^{\mathring{l}} \Delta^{ [\frac{l}{2}] } \Hw.
}
Recall that $O_{imjn} = R_{imjn} + \frac{1}{2} ( g \odot g )_{imjn} $ is the error term of the Rieman curvature $R_{imjn}$, and in view of \eqref{eq-Riem-W},
\alis{
O_{imjn} ={}&  W_{imjn} + \frac{1}{n-2}  \Ew \odot g  + \Si \odot g + \eta  g \odot g \\
 & +\Si \ast \Si + \Si \ast \eta + \eta^2 \ast g. 
}
 
The third line in \eqref{E:id-ho-maxwell} contains quadratic terms, whose decay is inadequate if they are estimated straightforwardly. Fortunately, we observe that these quadratic terms involve $\n^p \Hw_{p (i j)}$ which happens to occur in the transport equation of $\Ew_{ij}$ \eqref{trans-EHw-Ew}. Therefore we are able to replace $\n^p \Hw_{p (i j)}$ by \[ \lie_{ \dtau} \Ew_{i j} + (n-2) \Ew_{ij} + \cdots \] and then these quadratic terms can be further calculated as follows,
\alis{
& - 4 t  C_k^m (n-3)^{k-m} \nabla^{\mathring{m}} \Delta^{ [\frac{m}{2}] } \n^p \Hw_{p (i j)} \cdot \nabla^{\mathring{m}} \Delta^{ [\frac{m}{2}] } \Ew^{i j} \\
={} & 4 t C_k^m (n-3)^{k-m} \nabla^{\mathring{m}} \Delta^{ [\frac{m}{2}] }  \Ew^{ij} \\
& \qquad  \cdot \nabla^{\mathring{m}} \Delta^{ [\frac{m}{2}] } \( \lie_{ \dtau} \Ew_{i j} + (n-2) \Ew_{ij} - \Si^{p q} W_{piqj} + (\Si, \, \eta) \ast \Ew + \Si \ast \Si \ast \Si \) \\
={}&  2 t C_k^m (n-3)^{k-m} \dtau |\nabla^{\mathring{m}} \Delta^{ [\frac{m}{2}] } \Ew|^2 + 4 t C_k^m  (n-2) (n-3)^{k-m} |\nabla^{\mathring{m}} \Delta^{ [\frac{m}{2}] } \Ew|^2 \\
& - 4 t C_k^m (n-3)^{k-m}  \nabla^{\mathring{m}} \Delta^{ [\frac{m}{2}] } \Ew^{ij} \cdot \nabla^{\mathring{m}} \Delta^{ [\frac{m}{2}] } \( \Si^{p q} W_{piqj} \) \\
&+ \nabla^{\mathring{m}} \Delta^{ [\frac{m}{2}] }  \( (\Si, \, \eta) \ast \Ew + \Si \ast \Si \ast \Si \) \ast t \nabla^{\mathring{m}} \Delta^{ [\frac{m}{2}] } \Ew.
}
Note that, in the above identity, the first line on the right hand side of the second equality can be rearranged as
\alis{
&  2 t C_k^m (n-3)^{k-m} \dtau |\nabla^{\mathring{m}} \Delta^{ [\frac{m}{2}] } \Ew|^2 + 4 t C_k^m  (n-2) (n-3)^{k-m} |\nabla^{\mathring{m}} \Delta^{ [\frac{m}{2}] } \Ew|^2 \\
={}& C_k^m (n-3)^{k-m} \( \p_t \(2 t^2 |\nabla^{\mathring{m}} \Delta^{ [\frac{m}{2}] } \Ew|^2 \)  + 4 t (n-3) |\nabla^{\mathring{m}} \Delta^{ [\frac{m}{2}] } \Ew|^2  \).
}
In summary, we manage to reformulate the original quadratic terms as a divergence form plus some quadratic terms with positive signs and lower order terms
\ali{id-good-sign}{
& - 4 t  C_k^m (n-3)^{k-m} \nabla^{\mathring{m}} \Delta^{ [\frac{m}{2}] } \n^p \Hw_{p (i j)} \cdot \nabla^{\mathring{m}} \Delta^{ [\frac{m}{2}] } \Ew^{i j} \nnb\\
={}& \p_t \( 2t^2 C_k^m (n-3)^{k-m} |\nabla^{\mathring{m}} \Delta^{ [\frac{m}{2}] } \Ew|^2 \) + 4 t C_k^m (n-3)^{k+1-m} |\nabla^{\mathring{m}} \Delta^{ [\frac{m}{2}] } \Ew|^2 \nnb \\
& - 4 t C_k^m (n-3)^{k-m} \nabla^{\mathring{m}} \Delta^{ [\frac{m}{2}] } \Ew^{ij} \cdot \nabla^{\mathring{m}} \Delta^{ [\frac{m}{2}] } \( \Si^{p q} W_{piqj} \) \nnb\\
&+ \nabla^{\mathring{m}} \Delta^{ [\frac{m}{2}] }  \( (\Si, \, \eta) \ast \Ew + \Si \ast \Si \ast \Si \) \ast t \nabla^{\mathring{m}} \Delta^{ [\frac{m}{2}] } \Ew.
}
Moreover, substituting the above formulation back into the energy identity, we find that the divergence form provides boundary terms which will contribute extra positive energies, and the positive quadratic terms will afford spacetime integrals with favourable signs, after integrating over time. We collect all these informations in the following energy identity,
\ali{id-energy-ho-EHw-1}{
& \p_t \(2 t^2\|  \nabla^{\mathring{k}} \Delta^{ [\frac{k}{2}] } \Ew\|^2 + t^2 \|  \nabla^{\mathring{k}} \Delta^{ [\frac{k}{2}] } \Hw\|^2 \) \nnb \\
& \quad + \p_t \(   \sum_{l<k}  2 C_k^l  (n-3)^{k-l} t^2 \| \nabla^{\mathring{l}} \Delta^{ [\frac{l}{2}] } \Ew \|^2 \) \nnb \\
 & \quad +  \sum_{m \leq k}   4 C_k^m  (n-3)^{k+1-m} t \|\nabla^{\mathring{m}} \Delta^{ [\frac{m}{2}] } \Ew \|^2 \nnb \\
={} &
  f^k_1 + f^k_2 + f^k_3,
}
where
\[ f^k_1 =  \int_{M_t} \sum_{m \leq k} 4 C_k^m (n-3)^{k-m} t \nabla^{\mathring{m}} \Delta^{ [\frac{m}{2}] } \Ew^{ij} \cdot \nabla^{\mathring{m}} \Delta^{ [\frac{m}{2}] } \( \Si^{p q} W_{piqj} \)  \di \mu_g, \]
and
\alis
{|f^k_2| \lesssim {} &  \int_{M_t} \sum_{l \leq k} \big| \nabla_{I_{l}} \( \( \Si, \, \eta \)  \ast \Gm \) *  t \nabla^{\mathring{l}} \Delta^{ [\frac{l}{2}] } \Gm\big| \, \di \mu_g   \\
&+  \int_{M_t} \sum_{l \leq k} \big| \nabla_{I_{l-1}} \( \( \Si, \, \eta \)  \ast \Gm \) *  t \nabla^{\mathring{l}} \Delta^{ [\frac{l}{2}] } \Gm \big| \, \di \mu_g   \\
&+ \int_{M_t} \sum_{l \leq k} \big| \nabla^{\mathring{l}} \Delta^{ [\frac{l}{2}] }  \(   \Si \ast \Si \ast \Si \) \ast t \nabla^{\mathring{l}} \Delta^{ [\frac{l}{2}] } \Ew \big| \, \di \mu_g \\
& + \int_{M_t} \sum_{l \leq k} \nabla_{I_{l-1}} \big| \( \Gm   \ast \Gm \) *  t \nabla^{\mathring{l}} \Delta^{ [\frac{l}{2}] } \Gm \big| \, \di \mu_g \\
& + \int_{M_t} \sum_{l \leq k} \nabla_{I_{l-1}} \big| \( \( \Si, \, \eta \) \ast \(\Si, \, \eta \)  \ast \Gm \) *  t \nabla^{\mathring{l}} \Delta^{ [\frac{l}{2}] } \Gm \big| \, \di \mu_g,} 
with $\Gm \in \{\Ew, \, \Hw \}$, and 
\alis
{ f^k_3 = {} & \int_{M_t} \sum_{l \leq k} \nabla_{I_{l-1}} ( W \ast \Hw) *  t \nabla^{\mathring{l}} \Delta^{ [\frac{l}{2}] } \Ew \, \di \mu_g \\
&  +  \int_{M_t}  \sum_{l \leq k} \nabla_{I_{l-1}} ( W  \ast \Ew) * t \nabla^{\mathring{l}} \Delta^{ [\frac{l}{2}] } \Hw \, \di \mu_g. 
}

The treatment for $f_1^k$ will be postponed to the last.
The second term $f_2^k$ involves only lower-order terms. It can be estimated straightforwardly as follows,
\ali{ee-f1k}{
|f_2^k| \lesssim &  \( \|\eta \ast \Gm\|_{H_k} + \|\Si \ast \Gm\|_{H_k} \)  \|t \Gm\|_{H_{k}} \nnb \\
& +  \|\Si \ast \Si \ast \Si\|_{H_k}  \|t\Ew\|_{H_{k}} \nnb \\
&+  \| \Gm \ast \Gm\|_{H_{k-1}} \|t\Gm\|_{H_{k}}  \nnb \\
&+  \| \Gm \ast \(\Si, \, \eta\) \ast \(\Si, \, \eta \) \|_{H_{k-1}} \|t\Gm\|_{H_{k}} \nnb \\
\lesssim {}& \varepsilon^3 \Ld^3  t^{-2+3\de}.
}
The third term $f_3^k$ involves the non-decaying Weyl tensor $W$ whose estimate has been updated in \eqref{ee-W-N}. However, a straightforward estimate as below
\[
\int_{t_0}^t |f_3^k| \di t' \lesssim   \int_{t_0}^t \| \Gm \ast W\|_{H_{k-1}} \|t \Gm\|_{H_{k}} \di t' \lesssim \vep^2 \Ld^2 t^{2\de}
\]
would prevent us from improving the estimates for $\|\Ew\|_{H_k}$ and $\|\Hw\|_{H_k}$. Apart from this failure, the following manipulation that combines the improvement for $\|W\|_{H_N}$ \eqref{ee-W-N} and the inductive assumption \eqref{induction-Ew-Hw} is invalid as well, 
\ali{ee-direct-leading}{
 & \int_{t_0}^t t'  \| \Gm \ast W\|_{H_{k-1}} \| \Gm\|_{H_{k}} \di t' \lesssim \int_{t_0}^t t' \| W\|_{H_{N}}  \|\Gm\|_{H_{k-1}}  \|\Gm\|_{H_{k}} \di t' \nnb \\
 \lesssim {} & \int_{t_0}^t \( I_{N+2} +  \vep \Ld \)  \cdot  \( \varepsilon I_{k+1} + \varepsilon^{\frac{1}{2}} \Ld^{\frac{1}{2}} \( \vep^{\frac{1}{2}} I^{\frac{1}{2}}_{N+2} + \vep \Ld \) t^{\prime \delta} \)  \vep \Ld t^{\prime -1+\de} \di t' \nnb \\
 \lesssim {} &   \varepsilon^{2} \Ld^{\frac{3}{2}} I^{\frac{3}{2}}_{N+2}  t^{2 \delta} + \vep^\frac{5}{2} \Ld^\frac{5}{2} I_{N+2} t^{2 \de} + \varepsilon^3  I^{\frac{1}{2}}_{N+2} \Ld^{\frac{5}{2}} t^{2\de} +  \varepsilon^3 \Ld^{\frac{7}{2}} t^{2\de} \nnb \\
 \lesssim {} &  \varepsilon^{2} \Ld^{\frac{3}{2}} t^{2 \delta},
}
since, to close the inductive argument, our expected bound should be\footnote{This bound should be the right hand side of \eqref{induction-Ew-Hw} with $k$ replaced by $k+1$.}  $\varepsilon^2 I^2_{k+1} + \varepsilon \Ld \( \vep I_{N+2} + \vep^2 \Ld^2\) t^{2 \delta}$, while the bound derived in \eqref{E:ee-direct-leading} $\varepsilon^{2} \Ld^{\frac{3}{2}} t^{2 \delta}$ is obviously not proper for 
\[   \varepsilon^{2} \Ld^{\frac{3}{2}}  t^{2 \delta} \not\le C \( \varepsilon^2 I^2_{k+1} + \varepsilon \Ld \( \vep I_{N+2} + \vep^2 \Ld^2\) t^{2 \delta} \). \] 
Therefore, instead of estimating $f_3^k$ directly, we apply integration by parts to the second term in $f_3^k$, so that 
\alis{
 f_3^k ={}& \int_{M_t}  \sum_{l \leq k} \nabla_{I_{l-1}} ( W_{ipjq}  \ast \Hw) *  t \nabla^{\mathring{l}} \Delta^{ [\frac{l}{2}] } \Ew \, \di \mu_g \\
 & + \int_{M_t}  \sum_{l \leq k}  \nabla_{I_{l}} ( W_{ipjq}  \ast \Ew) * t \nabla_{I_{l-1} } \Hw \, \di \mu_g. 
 }
As a result, $f_3^k$ can be estimated in the following way,
\ali{ee-f2k}{
|f_3^k| \lesssim {}&   \| \Hw \ast W\|_{H_{k-1}} \|t\Ew\|_{H'_{k}} +  \| \Ew \ast W\|_{H_{k}} \|t\Hw\|_{H_{ k-1}} \nnb \\
\lesssim {} & t \| W\|_{H_{N}}  \|\Hw\|_{H_{k-1}}  \|\Ew\|_{H'_{k}} + t \| W\|_{H_{N}}   \|\Ew\|_{H_{k}} \|\Hw\|_{H_{ k-1}} \nnb \\
\lesssim {} &  \| W\|_{H_{N}}  \|\Hw\|_{H_{k-1}}  \|t \Ew\|_{H'_{k}},
}
where in the last inequality we have used the equivalence between the two norms $\| \cdot \|_{H_k}$ and $\| \cdot \|_{H'_{k}}$.
It is important to remark that $f_3^k$ \eqref{E:ee-f2k} 
is a leading nonlinear term, which will be further analyzed in \eqref{E:ee-regularity-solve}, with the help of  the extra positive terms on the left side of the energy identity \eqref{E:id-energy-ho-EHw-1}.


The remaining term $f_1^k$ on the right side of the energy identity \eqref{E:id-energy-ho-EHw-1}, can be handled in an analogous manner as the $k=0$ case. We first separate $f_1^k$ into the top-order (in $\Si$) parts and lower-order parts as follows,
 \ali{split-leading-term}{
 f_1^k= {}& \int_{M_t} \sum_{m \leq k} 4 C_k^m (n-3)^{k-m} t \nabla^{\mathring{m}} \Delta^{ [\frac{m}{2}] } \Ew^{ij} \nnb \\
 &\quad \cdot \( \nabla^{\mathring{m}} \Delta^{ [\frac{m}{2}] } \Si^{p q} W_{piqj}  +  \sum_{a+b=m-1} \n_{I_a} \Si \ast \n_{I_b} \n W \) \di \mu_g.
 }
For the top-order (in $\Si$) terms \[ \int_{M_t}   \sum_{m \leq k} 4 C_k^m (n-3)^{k-m} t  \nabla^{\mathring{m}} \Delta^{ [\frac{m}{2}] } \Si_{p q} W^{p i q j}  \cdot  \nabla^{\mathring{m}}\Delta^{ [\frac{m}{2}] } \Ew_{i j}  \, \di \mu_g, \] we use the higher-order version of \eqref{eq-evolution-Si}, which reads 
\alis{
& \lie_{\p_t} ( t  \nabla^{\mathring{m}} \Delta^{ [\frac{m}{2}] } \Si_{i j} ) = \mathcal{L}_{\dtau} \nabla^{\mathring{m}} \Delta^{ [\frac{m}{2}] } \Sigma_{ij} + \nabla^{\mathring{m}} \Delta^{ [\frac{m}{2}] } \Sigma_{ij} \\
={}&  \nabla^{\mathring{m}} \Delta^{ [\frac{m}{2}] } \Ew_{i  j } - \nabla^{\mathring{m}} \Delta^{ [\frac{m}{2}] } \(\Sigma_{ip} \Sigma_{j}^p \) -  \frac{1}{n}  \nabla^{\mathring{m}} \Delta^{ [\frac{m}{2}] } |\Sigma|^2 g_{ij}\\
& + \sum_{a+b=m-1} \n_{I_a} (\n \Si, \n \eta) \ast \n_{I_b} \Si,
} to replace $\nabla^{\mathring{m}} \Delta^{ [\frac{m}{2}] } \Ew_{i j}$ by $\lie_{\p_t} ( t \nabla^{\mathring{m}} \Delta^{ [\frac{m}{2}] } \Si_{i j} ) + \text{l.o.t.}$ so that
\alis{
 & \int_{M_t}  \sum_{m \leq k} 4 C_k^m (n-3)^{k-m} t \nabla^{\mathring{m}} \Delta^{ [\frac{m}{2}] } \Si^{p q}    W_{p i q j} \cdot \nabla^{\mathring{m}} \Delta^{ [\frac{m}{2}] } \Ew_{i j} \, \di \mu_g \\
 ={}&  \int_{M_t}  \sum_{m \leq k} 4 C_k^m (n-3)^{k-m} t  \nabla^{\mathring{m}} \Delta^{ [\frac{m}{2}] } \Si_{p q} W^{p i q j} \cdot \( \lie_{\p_t} ( t \nabla^{\mathring{m}} \Delta^{ [\frac{m}{2}] } \Si_{i j} )  \) \di \mu_g \\
 & + \int_{M_t}  \sum_{m \leq k} 4 C_k^m (n-3)^{k-m} t  \nabla^{\mathring{m}} \Delta^{ [\frac{m}{2}] } \Si_{p q} W^{p i q j} \ast \(  \text{l.o.t.}  \) \di \mu_g \\
  ={}& \sum_{m \leq k} 2 C_k^m (n-3)^{k-m} \cdot \p_t   \int_{M_t}  \( t^2  \nabla^{\mathring{m}} \Delta^{ [\frac{m}{2}] } \Si_{p q}   \nabla^{\mathring{m}} \Delta^{ [\frac{m}{2}] } \Si_{i j}  W^{p i q j} \) \di \mu_g \\
  & + f_4^k + f_5^k,
  } 
  with
\alis{
  f_4^k ={}&  \sum_{m \leq k} \int_{M_t} t  \nabla^{\mathring{m}} \Delta^{ [\frac{m}{2}] } \Si_{p q}  \nabla^{\mathring{m}} \Delta^{ [\frac{m}{2}] }  \Si_{i j}  \ast \n \Hw \, \di \mu_g  \\
    f_5^k={}&  \sum_{m \leq k} \int_{M_t} t  \nabla^{\mathring{m}} \Delta^{ [\frac{m}{2}] } \Si_{p q}  \nabla^{\mathring{m}} \Delta^{ [\frac{m}{2}] }  \Si_{i j}  \ast  \Ew \, \di \mu_g  \\
  &+  \sum_{m \leq k} \int_{M_t} t  \nabla^{\mathring{m}} \Delta^{ [\frac{m}{2}] }  \Si \ast   \nabla^{\mathring{m}} \Delta^{ [\frac{m}{2}] }  \Si \ast  \(\Si, \, \eta \) \ast \(\Si, \, \Ew, \, W, \, \Si \ast \Si\)  \di \mu_g  \\
   &+  \sum_{m \leq k} \int_{M_t}2 t  \nabla^{\mathring{m}} \Delta^{ [\frac{m}{2}] } \Si_{p q} W^{p i q j} \nabla^{\mathring{m}} \Delta^{ [\frac{m}{2}] } \( \Sigma_{i m} \Sigma_{j}^m\) \di \mu_g  \\
 &+  \sum_{m \leq k} \int_{M_t} t \nabla^{\mathring{m}} \Delta^{ [\frac{m}{2}] } \Si \ast W \ast \sum_{a+b=m-1} \n_{I_a} (\n \Si, \, \n \eta) \ast \n_{I_b} \Si \,\di \mu_g.
} 
Similar to $f^k_2$, here $f^k_4$ and $f^k_5$ are both lower-order terms. However, note that $\| \n \Hw\|_{L^\infty}$ is not bounded due to an issue of regularity. Hence, we apply integration by parts to $f_4^k$ so that \[ f_4^k=  \sum_{m \leq k}\int_{M_t}  t  \nabla^{m+1} \Si \ast \nabla^{\mathring{m}} \Delta^{ [\frac{m}{2}] }  \Si   \ast \Hw \, \di \mu_g, \] and then $f^k_4$ and $f^k_5$ can be estimated in a straightforward way,
\ali{ee-f45k}{
|f_4^k| + |f_5^k| \lesssim {} &  t^{-1} \| \Hw \|_{L^\infty}  \|t\Si\|_{H'_{ k}}  \|t\Si\|_{H_{k+1}} +   t^{-1}  \|t\Si\|^2_{H'_{ k}} \| \Ew\|_{L^\infty} \nnb \\
 &+  t^{-1}  \|t\Si\|^2_{H'_{ k}} \| \(\Si, \, \eta \) \ast \(W,\, \Ew, \,\Si, \, \Si \ast \Si\) \|_{L^\infty} \nnb \\
   & +  t^{-1}  \( \|t\Si\|^2_{H_{ k}}  \| \Si \|_{H_N} + \|t\Si\|_{H_{ k}}  \| t \Si \|_{H_N} \|\eta \|_{H_k} \) \|W\|_{L^\infty} \nnb \\
    \lesssim {}& \varepsilon^3 \Ld^3  t^{-2+3\de}. 
}

After all the above estimates, at this stage, we have transformed \eqref{E:id-energy-ho-EHw-1} into the following form
\ali{id-energy-ho-EHw-2}{
& \p_t \( 2  \|  t\nabla^{\mathring{k}} \Delta^{ [\frac{k}{2}] } \Ew\|^2 + \| t \nabla^{\mathring{k}} \Delta^{ [\frac{k}{2}] } \Hw\|^2 \) \nnb \\
& \quad + \p_t \(   \sum_{m<k}  2 C_k^m  (n-3)^{k-m} \| t\nabla^{\mathring{m}} \Delta^{ [\frac{m}{2}] } \Ew \|^2 \) \nnb \\
 & \quad  + \sum_{l \leq k}   4 C_k^l  (n-3)^{k+1-l} t \|\nabla^{\mathring{l}} \Delta^{ [\frac{l}{2}] } \Ew \|^2 \nnb \\
={} & \sum_{m \leq k} 2 C_k^m (n-3)^{k-m} \cdot \p_t   \int_{M_t} \( t^2  \nabla^{\mathring{m}} \Delta^{ [\frac{m}{2}] } \Si_{p q}   \nabla^{\mathring{m}} \Delta^{ [\frac{m}{2}] } \Si_{i j}  W^{p i q j} \) \di \mu_g \nnb \\
& +  f_2^k +  f^k_3+ f_4^k +f^k_5 + f^k_6,
}
where $f_i^k$, $i=2:5$ are defined as before, and $f_6^k$ is the lower-order (in $\Si$) parts in \eqref{E:split-leading-term},  \[ f_6^k = \int_{M_t}  \sum_{m\leq k}\sum_{a+b = m-1} \n_{I_a} \Si \ast \n_{I_b} \n W \ast t \nabla^{\mathring{m}} \Delta^{ [\frac{m}{2}] }  \Ew^{i j} \, \di \mu_g. \]  We find that $f^k_6$ contains the non-decaying $W$ as $f_3^k$,  and therefore shares a similar estimate,
\ali{ee-f6k}{
|f_6^k | \lesssim {}& \sum_{a+b \leq k-1} t \| \n_{I_a} \Si \ast \n_{I_b} \n W \|  \| \Ew\|_{H^\prime_{ k}}  \nnb \\
\lesssim {}& t \|\Si\|_{H_k} \|\n W\|_{H_{N-1}} \|\Ew\|_{H^\prime_{k}}. 
}
In summary, among the nonlinear terms, $f_2^k, \,  f_4^k, \, f_5^k$ are all lower-order terms estimated in \eqref{E:ee-f1k} and \eqref{E:ee-f45k}, while $f_3^k, \, f_6^k$ are the main terms involving $W$ and  their estimates \eqref{E:ee-f2k}, \eqref{E:ee-f6k} are summarized as
\beq\label{ee-f236k}
 |f_3^k| + |f_6^k| \lesssim t \| W\|_{H_{N}} \|\Ew\|_{H'_{k}}   \( \|\Si\|_{H_k}  +  \|\Hw\|_{H_{k-1}}  \).
\eeq
With the same reason as \eqref{E:ee-direct-leading}, \eqref{ee-f236k} is not allowed to be estimated directly, otherwise, the inductive argument would fail. In what follows, we will see that the positive terms on the left side of \eqref{E:id-energy-ho-EHw-2} is crucial for the analysis of \eqref{ee-f236k}.

After integrated over $[t_0, t]$, the energy identity \eqref{E:id-energy-ho-EHw-2} becomes
\ali{ineq-energy-ho-EHw-1}{
& 2  \|  t\nabla^{\mathring{k}} \Delta^{ [\frac{k}{2}] } \Ew\|^2 + \| t \nabla^{\mathring{k}} \Delta^{ [\frac{k}{2}] } \Hw\|^2 \nnb   \\
& \quad + \sum_{l < k}  2 C_k^l  (n-3)^{k-l}  \| t \nabla^{\mathring{l}} \Delta^{ [\frac{l}{2}] } \Ew \|^2  \nnb \\
 & \quad +  \sum_{m \leq k}   4 C_k^m  (n-3)^{k+1-m} \int_{t_0}^t t’ \|\nabla^{\mathring{m}} \Delta^{ [\frac{m}{2}] } \Ew \|^2 \, \di t'  \nnb \\
\lesssim {} &  \sum_{m \leq k} 2 C_k^m (n-3)^{k-m} \cdot \Big| \int_{M_t}  t^{ 2}  \nabla^{\mathring{m}} \Delta^{ [\frac{m}{2}] } \Si_{p q}   \nabla^{\mathring{m}} \Delta^{ [\frac{m}{2}] } \Si_{i j}  W^{p i q j} \di \mu_g \Big|  \nnb \\
&+ \varepsilon^2 I^2_{k+2} + \varepsilon^3 \Ld^3 + \int_{t_0}^t t' \| W\|_{H_{N}} \|\Ew\|_{H'_{k}}   \( \|\Si\|_{H_k}  +  \|\Hw\|_{H_{k-1}}  \) \di t'.
}
By the Cauchy-Schwarz inequality,
\ali{ee-regularity-solve}{
 & \int_{t_0}^t t' \| W\|_{H_{N}} \|\Ew\|_{H'_{k}}   \( \|\Si\|_{H_k}  +  \|\Hw\|_{H_{k-1}}  \) \di t' \nnb \\
\leq {} &  \int_{t_0}^t  a t' \| \Ew\|_{H'_{k}}^2  \, \di t' +  \int_{t_0}^t a^{-1} t'  \| W\|_{H_{N}}^2 \( \|\Si\|_{H_k}^2 +  \|\Hw\|^2_{H_{k-1}}  \) \di t'.
}
We choose the constant $a$ to be properly small so that $\int_{t_0}^t  a t' \| \Ew\|_{H'_{k}}^2 \di t'$ can be absorbed by the extra positive terms $$\sum_{m \leq k}  4  C_k^m  (n-3)^{k+1-m} \int_{t_0}^t t' \|\nabla^{\mathring{m}} \Delta^{ [\frac{m}{2}] } \Ew \|^2 \di t' $$ on the left side of \eqref{E:ineq-energy-ho-EHw-1}, noting that $n \geq 4$. The second term on the right hand side of \eqref{E:ee-regularity-solve} involves only linear terms whose estimates have already been updated.
In practice, by the improvement for $\|W\|_{H_N}$ \eqref{ee-W-N}, the inductive assumption \eqref{induction-Ew-Hw}, and the enhanced estimates for $ \|\Si\|_{H_k}$ ($k\leq N$) \eqref{E:esti-Si} followed, the remaining term in \eqref{E:ee-regularity-solve} admits the bound
\alis{
&  \int_{t_0}^t a^{-1}   t'  \| W\|_{H_{N}}^2 \( \|\Si\|_{H_k}^2 +  \|\Hw\|^2_{H_{k-1}}  \) \di t' \\
\lesssim {}&   \int_{t_0}^t    t^{\prime -1+2\de}  \(  I^2_2 + \vep^2 \Ld^2 \)   \vep \Ld \left(  \varepsilon I_{N+2}  +  \varepsilon^2 \Lambda^2 \right)  \di t' \\
\lesssim {}&  \varepsilon \Ld \( \vep I_{N+2} + \vep^2 \Ld^2\) t^{2 \delta}.
}
In the same way, the first term on the right side of \eqref{E:ineq-energy-ho-EHw-1} is bounded by
\alis{
 &   \sum_{m \leq k} 2 C_k^m (n-3)^{k-m} \Big| \int_{M_t}  t^2  \nabla^{\mathring{m}} \Delta^{ [\frac{m}{2}] } \Si_{p q}   \nabla^{\mathring{m}} \Delta^{ [\frac{m}{2}] } \Si_{i j}  W^{p i q j} \di \mu_g \Big|  \\
  \lesssim {} &  \|t \Si \|_{H_k}^2 \| W \|_{L^\infty} \\
 \lesssim {} & \left(\varepsilon^2  I^2_{k+1}  + \varepsilon^2 \Ld I_{N+2} +  \varepsilon^3 \Lambda^3 \right) t^{2\delta}  \(I_{N+2} + \vep  \Ld  \) \\
 \lesssim {} &  \varepsilon \Ld \( \vep I_{N+2} + \vep^2 \Ld^2\) t^{2 \delta}.
}
As a consequence, we obtain the following energy estimates,
\alis{
& 2  \|  t\nabla^{\mathring{k}} \Delta^{ [\frac{k}{2}] } \Ew\|^2 + \| t \nabla^{\mathring{k}} \Delta^{ [\frac{k}{2}] } \Hw\|^2   \\
& \quad + \sum_{l < k}  2 C_k^l  (n-3)^{k-l}  \| t \nabla^{\mathring{l}} \Delta^{ [\frac{l}{2}] } \Ew \|^2  \\
 & \quad +  \sum_{m \leq k}    C_k^m  (n-3)^{k+1-m} \int_{t_0}^t t^\prime \|\nabla^{\mathring{m}} \Delta^{ [\frac{m}{2}] } \Ew \|^2 \di t^\prime  \\
\lesssim {}&  \vep^2 I_{k+2}^2 +   \varepsilon \Ld \( \vep I_{N+2} + \vep^2 \Ld^2\) t^{2 \delta}.
}
Thus for $k\leq N$, there is a constant $C(N, \, n)$ depending only on $N$ and the dimension $n$ such that
\alis{
&  \|  t \Ew\|^2_{H_k} + \| t  \Hw\|^2_{H_k}  
\leq C(N, \, n) \(   \vep^2 I_{k+2}^2 +   \varepsilon \Ld \( \vep I_{N+2} + \vep^2 \Ld^2\) t^{2 \delta} \).
}
The inductive proof is completed.

\subsection{Estimates for  $\Si$ and the top-order derivative of $\eta$}
\begin{proposition}\label{prop-energy-estimate-2nd} 
We derive from the bootstrap assumptions \eqref{bt-Si-eta}--\eqref{bt-Jw} that
\begin{align*}
t^2 \| \Sigma\|^2_{H_{N+1}}   \lesssim {}&  \varepsilon^2 I_{N+2}^2 + \vep \Ld \left(   \vep I_{N+2}   +  \varepsilon^2 \Lambda^2 \right) t^{2 \delta}, \\
t^2 \| \eta\|^2_{H_{N+2}} \lesssim {}& \varepsilon^2 I^2_{N+2}+ \varepsilon^4 \Lambda^4.
\end{align*}
\end{proposition}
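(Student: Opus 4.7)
The plan is to derive the two bounds in sequence, using the upgraded estimates for $\Ew$, $\Hw$, $W$, and $\|\Sigma\|$ from the preceding subsections as inputs. The first bound follows from elliptic regularity on the Codazzi system; the second extends the Gr\"onwall argument of Lemma \ref{prop-energy-estimate-eta-low} to the top order.

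For $\|\Sigma\|_{H_{N+1}}$: the Codazzi equations \eqref{Codazzi-div-k}--\eqref{div-curl-k-hat} form an elliptic div--curl system for the symmetric trace-free tensor $\Sigma_{ij}$, with $\nabla\eta$ and $\Hw$ playing the role of source terms. Since Lemma \ref{lemm-elliptic} is valid through $k=N+2$, applying \eqref{elliptic-est-Ps} at $k=N+1$ yields
\[
\|t\Sigma\|_{H_{N+1}}^2 \lesssim \|t\Sigma\|^2 + \|t\Hw\|_{H_N}^2 + \|t\eta\|_{H_{N+1}}^2.
\]
Substituting the improved $L^2$ bound \eqref{ee-Sigma}, the improved $H_N$ bound on $\Hw$ from Proposition \ref{pro-est-Ew-Hw}, and the $H_{N+1}$ bound from Lemma \ref{prop-energy-estimate-eta-low}, the right-hand side is controlled by $\varepsilon^2 I_{N+2}^2+\varepsilon\Lambda(\varepsilon I_{N+2}+\varepsilon^2\Lambda^2)t^{2\delta}$, which is the claimed bound on $\Sigma$.

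For $\|\eta\|_{H_{N+2}}$: I extend the argument of Lemma \ref{prop-energy-estimate-eta-low} to $k=N+2$, applying $\nabla^{N+2}$ to \eqref{eq-evolution-eta}, pairing with $\nabla^{N+2}\eta$, and integrating over $M$. Because the source $\eta^2+\tfrac{1}{n}|\Sigma|^2$ carries no derivatives, the only sources of potential regularity loss are (i) the commutator $[\nabla^{N+2},\partial_\tau]\eta$, whose terms involve $\partial_\tau\Gamma\sim\nabla(\eta,\Sigma)$, and (ii) the Leibniz expansion of $\nabla^{N+2}|\Sigma|^2$. All but one term in (i) and (ii) are controlled using the bootstrap assumptions together with the improved $H_{N+1}$ bound on $\Sigma$ from the first step. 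The delicate term is the endpoint contribution $\Sigma\ast\nabla^{N+2}\Sigma$ in (ii), which naively requires $\Sigma\in H_{N+2}$, one order past the bootstrap and outside the range of the multiplicative rule \eqref{E:multiplication}. To handle it inside the pairing $\int\nabla^{N+2}\eta\cdot\nabla^{N+2}|\Sigma|^2\,\di\mu_g$, I use the Codazzi identity $\nabla_i\eta=\tfrac{1}{n-1}\nabla^j\Sigma_{ij}$ to trade one derivative of $\nabla^{N+2}\eta$ for a divergence of $\Sigma$; a further integration by parts then absorbs the top-order $\Sigma$ factor, leaving only quantities controlled by $\|\Sigma\|_{H_{N+1}}$ and $\|\Sigma\|_{L^\infty}$. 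A Gr\"onwall inequality on $t^2\|\eta\|_{H_{N+2}}^2$ then yields the desired bound $\varepsilon^2 I_{N+2}^2+\varepsilon^4\Lambda^4$.

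The main obstacle is this top-order closure in the $\eta$ estimate: the multiplication inequality \eqref{E:multiplication} is critical exactly at $k=N+2$, so only by invoking the Codazzi relation in a carefully chosen index contraction can one avoid an unavailable $H_{N+2}$ bound on $\Sigma$. Keeping track of commutators and contraction patterns through this Codazzi substitution is where the argument requires the most care.
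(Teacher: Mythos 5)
Your first bound is proved exactly as in the paper: the Codazzi system \eqref{Codazzi-div-k}--\eqref{div-curl-k-hat} is treated as an elliptic div--curl system via Lemma \ref{lemm-elliptic} (i.e.\ \eqref{E:esti-Si} with $k=N+1$), into which one substitutes \eqref{ee-Sigma}, the improved $\|\Hw\|_{H_N}$ from Proposition \ref{pro-est-Ew-Hw}, and Lemma \ref{prop-energy-estimate-eta-low}. That part is fine.

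For the top-order $\eta$ estimate you correctly isolate the obstruction, namely the endpoint term of schematic form $\int_M \nabla^{N+2}\eta \cdot \Sigma \ast \nabla^{N+2}\Sigma \,\di\mu_g$, but your proposed cure does not close. Substituting $\nabla_i\eta=\tfrac{1}{n-1}\nabla^j\Sigma_{ij}$ into the factor $\nabla^{N+2}\eta$ replaces the one controlled top-order factor (which sits in the energy) by a second copy of the uncontrolled quantity $\nabla^{N+2}\Sigma$, so the term becomes $\int_M \Sigma \ast \nabla^{N+2}\Sigma \ast \nabla^{N+2}\Sigma\,\di\mu_g$. An integration by parts applied to such a term either pushes a derivative onto one factor, producing $\nabla^{N+3}\Sigma$, or produces $\int \nabla\Sigma \ast \nabla^{N+1}\Sigma \ast \nabla^{N+2}\Sigma$, which still contains one linear factor of $\nabla^{N+2}\Sigma$ that cannot be absorbed: the $\eta$-energy identity contains no coercive $\|\nabla^{N+2}\Sigma\|^2$ term to absorb against, and the bootstrap only gives $\Sigma\in H_{N+1}$. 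The ``exact divergence'' mechanism you implicitly invoke would require the two top-order factors to combine as $\nabla_a(\,\cdot\,)$ of a quantity with $2(N+1)$ total derivatives, but the contraction pattern forbids this: in the problematic term the undifferentiated $\Sigma^{pq}$ is contracted against the tensor slots of $\nabla_{I_{N+2}}\Sigma_{pq}$, not against derivative slots, so the top-order $\Sigma$-factor is not a divergence and cannot be rewritten as $N+1$ derivatives of $\nabla\eta$ either (the curl part of $\nabla\Sigma$ involves $\Hw$, which is only controlled in $H_N$). In short, the Codazzi trade is one derivative short and circular at exactly the critical order.

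The paper's resolution is different and is the essential idea of the proof: following Fournodavlos--Luk, one uses the wave equation \eqref{E:wave-Sigma} for $\Sigma$ to rewrite $\nabla_{I_N}\Delta\Sigma_{ij}\,\Sigma^{ij}$ as $\dtau\bigl(\dtau\nabla_{I_N}\Sigma_{ij}\Sigma^{ij}+(n-1)\nabla_{I_N}\Sigma_{ij}\Sigma^{ij}\bigr)+n\nabla_{I_N}\nabla_i\nabla_j\eta\,\Sigma^{ij}$ plus controlled errors, and then absorbs the total $\dtau$-derivative by passing to the renormalized variable $\tilde\eta_{N+2}$ of \eqref{def-ti-eta}. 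Its transport equation \eqref{pt-teta} contains only $\nabla^{N+2}\eta$ (with a small factor $\Sigma$) and bounded sources, so Gr\"onwall plus the comparison \eqref{E:compare-eta-teta} (via Corollary \ref{prop-elliptic-Delta-1}) yields $t\|\eta\|_{H_{N+2}}\lesssim \varepsilon I_{N+2}+\varepsilon^2\Lambda^2$. Without this renormalization step (or an equivalent device exploiting the second-order time structure of $\Sigma$), your argument for the second inequality has a genuine gap.
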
 
\begin{proof}
The lower-order estimates of $\| \Sigma\|_{H_{N}}$ and $\|\eta\|_{H_{N+1}}$ have been achieved in \eqref{E:esti-Si} and Lemma \ref{prop-energy-estimate-eta-low} respectively.

The estimate for $\| \Sigma\|^2_{H_{N+1}}$ follows in an analogous fashion as \eqref{E:esti-Si}, except that now we have to substitute \eqref{ee-Sigma} and the updated estimates for $\|\Hw\|_{H_{N}}$ (Proposition \ref{pro-est-Ew-Hw}) and $\|\eta\|_{H_{N+1}}$ (Lemma \ref{prop-energy-estimate-eta-low}) into \eqref{E:esti-Si} with $k=N+1$.

Next, we follow the idea in \cite{Fournodavlos-Luk-Kasner} (referring to \cite[Proposition 3.10]{W-Y-open-M} as well) to retrieve an estimate for $\|\nabla_{I_{N+2}} \eta\|$. Namely, based on the transport equation of $\eta$ \eqref{eq-evolution-eta} and the wave equation for $\Si$ \eqref{E:wave-Sigma}, we use the technic of renormalization and elliptic estimates to improve the regularity of $\eta$.

Applying $\nabla_{I_{N}} \Delta$ on \eqref{eq-evolution-eta} and commuting it with $\p_\tau$, we have
\alis{
 \p_{\tau}  \nabla_{I_{N}} \Delta \eta +   \nabla_{I_{N}} \Delta \eta &=  2 \eta  \nabla_{I_{N}} \Delta \eta +  \frac{2}{n}  \nabla_{I_{N}} \Delta \Sigma_{i j} \cdot \Sigma^{i j}  \nnb \\
& \quad + \sum_{ \substack{a+b\leq N+2\\ a, \, b \leq N+1} } ( \nabla_{I_a} \Sigma, \nabla_{I_a} \eta) * ( \nabla_{I_b} \Sigma, \nabla_{I_b} \eta).
}
As a remark, the term $\frac{2}{n}  \nabla_{I_{N}} \Delta \Sigma_{i j} \cdot \Sigma^{i j}$ on the right side is not bounded due to the restriction of regularity for $\Si$. Fortunately, the wave equation for $\Si$ \eqref{E:wave-Sigma} enables us to reformulate this term as
\alis{
 \nabla_{I_{N}} \Delta \Sigma_{i j} \cdot \Sigma^{i j} = {}& 
\dtau \( \dtau  \nabla_{I_{N}} \Sigma_{i j} \Sigma^{i j} + (n-1)  \nabla_{I_{N}} \Sigma_{i j} \Sigma^{i j} \) \\
&   + n \nabla_{I_{N}} \nabla_i \nabla_j \eta \Sigma^{i j} + f_1, 
}
where under the bootstrap assumptions \eqref{bt-Si-eta}--\eqref{bt-Jw},  \[  \|f_1\|  \lesssim \varepsilon^2 \Lambda^2 t^{-2+2\delta}. \] 
By means of defining the modified variable
\begin{equation}\label{def-ti-eta}
  \tilde \eta_{N+2} =\nabla_{I_N} \Delta \eta - \frac{2}{n} \left( \dtau  \nabla_{I_{N}} \Sigma_{i j} \Sigma^{i j} +  (n-1) \nabla_{I_N} \Sigma_{i j} \Sigma^{i j} \right),
  \end{equation}
we deduce a transport equation for $\tilde \eta_{N+2}$,
\begin{equation}\label{pt-teta}
 \p_{\tau}   \tilde \eta_{N+2} +  \tilde \eta_{N+2}  = 2 \nabla_{I_N} \Delta \eta \cdot \eta  +2 \Sigma \nabla_{I_{N+2}} \eta +   f_2,
\end{equation}
with \[ \|f_2\|  \lesssim \varepsilon^2 \Lambda^2 t^{-2+2\delta}. \] In particular, in \eqref{pt-teta}, the regularity issue arising from $ \nabla_{I_{N}} \Delta \Si$ disappear.
Viewing Corollary \ref{prop-elliptic-Delta-1} and the definition of $\tilde \eta_{N+2}$ \eqref{def-ti-eta}, we have
\ali{compare-eta-teta}{
\|\nabla_{I_{N+2}} \eta\| \lesssim {}&  \|\nabla_{I_{N}} \Delta \eta\|  + \| \eta\|_{H_{N+1} }\nnb \\
\lesssim {} & \|\tilde \eta_{N+2}\|  + \vep t^{-1} + \varepsilon^2 \Lambda^2 t^{-2+2\delta}.
}
Then \eqref{pt-teta} yields the energy inequality,
\ali{energy-inequ-teta}{
t  \|\tilde \eta_{N+2}\| \lesssim {} & \varepsilon I_{N+2} + \varepsilon^2 \Lambda^2 + \int_{t_0}^t \varepsilon \Lambda t^{\prime -2+\delta} \cdot t'\|\tilde \eta_{I_{N+2}}\| \, \di t'.
}
It follows from the Gr\"{o}nwall's inequality that
\[
t\|\tilde \eta_{I_{N+2}} \| \lesssim  \varepsilon I_{N+2}+ \varepsilon^2 \Lambda^2.
\]
Using \eqref{E:compare-eta-teta} again, 
we accomplish the estimate for the top order of $\eta$.
\end{proof}

\begin{remark}\label{rk-ee-other}
Based on the propositions \ref{pro-est-Ew-Hw} and \ref{prop-energy-estimate-2nd}, a further usage of the equations \eqref{eq-evolution-eta}--\eqref{eq-evolution-Si} and \eqref{Gauss-Ricci-hat-k} implies
\begin{align*}
 t^2 \| \dtau g_{ij} \|^2_{H_{N+1}} & \lesssim  \left(  \varepsilon^2 I_{N+2}^2 +  \vep \Ld \left(   \vep I_{N+2}   +  \varepsilon^2 \Lambda^2 \right) \right) t^{2\delta}, \\ 
\|g_{ij} - \gamma_{i j}\|^2_{H_{N+1}} & \lesssim \varepsilon^2 I_{N+2}^2 + \vep \Ld \left(   \vep I_{N+2}   +  \varepsilon^2 \Lambda^2 \right), \\ 
 t^2 \|\dtau \Sigma\|^2_{H_{N}} & \lesssim  \left(  \varepsilon^2 I_{N+2}^2 + \vep \Ld \left(   \vep I_{N+2}   +  \varepsilon^2 \Lambda^2 \right) \right) t^{2\delta}, \\ 
  t^2 \|\dtau \eta \|^2_{H_{N}} & \lesssim \varepsilon^2  I_{N+2}^2 + \varepsilon^4 \Lambda^4, \\ 
t^2 \| R_{ij} + (n-1) g_{ij } \|^2_{H_{N}} & \lesssim  \left(  \varepsilon^2 I_{N+2}^2  +  \vep \Ld \left(   \vep I_{N+2}   +  \varepsilon^2 \Lambda^2 \right) \right) t^{2\delta}. 
\end{align*} 
\end{remark}

\subsection{Estimates for $g-\ga$}
We have derived the bound for $\|g_{ij} - \gamma_{i j}\|_{H_{N+1}}$ (Remark \ref{rk-ee-other}). The estimate for the top-order $\|\nabla_{I_{N+2}} (g- \ga)\|$ follows in the same way as  $\|\nabla_{I_{N+2}}\eta\|$. Observe that with the wave equation for $\Si$ \eqref{E:wave-Sigma}, the evolution equation \eqref{eq-evolution-g} can be rewritten as,
\alis{
\dtau \( \De (g-\ga)_{i j} + 2 \dtau \Si_{i j}  \) ={}& n \n_i \n_j \eta -2\De \eta g + 2 (n-1) \dtau \Sigma_{ij} -4 \Sigma_{ij}\\
& + \Jw\ast \Si + (\dtau \Si, \Si, \eta)\ast (\Si, \eta) + (\Si, \eta)^3.
}
Following the proof of Proposition \ref{prop-energy-estimate-2nd}, we obtain the bound 
\beq\label{ee-g-ga}
\|g-\ga\|^2_{H_{N+2}} \lesssim \varepsilon^2 I_{N+2}^2 + \vep \Ld \left(   \vep I_{N+2}   +  \varepsilon^2 \Lambda^2 \right). 
\eeq

\subsection{Closure of the bootstrap argument}
Combining the conclusions of the propositions \ref{prop-W}, \ref{pro-est-Ew-Hw} and \ref{prop-energy-estimate-2nd}, and the estimate \eqref{ee-g-ga}, we can choose $\Ld > C I_{N+2}$ large and $\vep$ small enough (depending on $I_{N+2}$), so that the bootstrap assumptions \eqref{bt-Si-eta}--\eqref{bt-Jw} hold with the constant $\Lambda$ replaced by $\frac{1}{2} \Lambda$. We thus complete the proof of Theorem \ref{thm-EE}.

\appendix

\section{Local existence theorem}\label{sec-local}

The Einstein equations \eqref{eq-evolution-g}--\eqref{eq-evolution-Si}, \eqref{Gauss-Ricci-trace-hat -k}--\eqref{Codazzi-div-k} in Gaussian normal coordinates \eqref{metric-form-tau} over $(\mathcal{M}, \,\breve g)$ are composed  of  the evolution equations
\begin{subequations}
\begin{align}
\p_t \tilde g_{ij} & = -2 \tilde k_{i j}, \label{pt-g} \\
\p_t \tilde k_{ij}& = \tilde R_{ij} - 2 \tilde k_i^p \tilde k_{jp} + \text{tr}_{\tilde g} \tilde k \tilde k_{ij}, \label{pt- k}
\end{align}
\end{subequations}
and the constraint equations
\begin{subequations}
\begin{align}
\tilde R -|\tilde k|^2 + (\tr_{\tilde g} \tilde k)^2  &= 0, \label{Constrain-Guass} \\
\tilde \nabla^i \tilde k_{ij} - \tilde \nabla_j \tr_{\tilde g} \tilde k &= 0. \label{Constrain-Codazzi}
\end{align}
\end{subequations}
This system implies a wave type equation for $\tilde k$\footnote{Take $\p_t$ derivative on \eqref{pt- k}.} and letting $$h := \tr_{\tilde g} \tilde k$$ be a separate variable, one ends up with the following reduced system \cite{Fournodavlos-Luk-Kasner},
\ali{reduced-sys}{
\p_t \tilde g_{ij} & = -2 \tilde k_{i j}, \nnb \\
\p_t h & = |\tilde k|^2, \nnb \\
-\p_t^2 \tilde k_{ij} + \Delta_{\tilde g} \tilde k_{ij}  &= \tilde \nabla_i \tilde \nabla_j h  - 2 \ti R_{i}{}^{m}{}_{j}{}^{l} \ti k_{m l} + \ti R_{i m} \ti k_{j}^n + \ti R_{j m} \ti k_i^m \nnb \\
& \,\,\, + \p_t \tilde k \ast \tilde k + \tilde k \ast \tilde k \ast \tilde k.
}

Following \cite{Fournodavlos-Luk-Kasner}, one can show that the reduced system \eqref{E:reduced-sys} and  the vacuum Einstein equations \eqref{pt-g}--\eqref{Constrain-Codazzi} are equivalent, if the data of  \eqref{E:reduced-sys} are those induced from \eqref{pt-g}--\eqref{Constrain-Codazzi}.

\begin{lemma}\label{equi-reduce-EKG}
If $(\tilde g, h, \tilde k)$ is any solution of the reduced system \eqref{E:reduced-sys} whose initial data $(\tilde g, h, \tilde k, \p_t \tilde k)|_{t=t_0} =(\tilde g_0, h_0, \tilde k_0, \tilde k_1)$ verify the constraint equations 
\eqref{Constrain-Guass}--\eqref{Constrain-Codazzi}, and $h_0 = \tr_{\tilde g_0} \tilde k_0$,
then $\breve g = -dt^2 + \tilde g$ is a solution to the vacuum Einstein equations \eqref{pt-g}--\eqref{Constrain-Codazzi}.
\end{lemma}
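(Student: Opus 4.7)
The plan is the classical \emph{propagation of constraints} argument. I introduce four failure quantities measuring the deviation of a solution $(\tilde g, h, \tilde k)$ of \eqref{E:reduced-sys} from a genuine solution of the full Einstein system \eqref{pt-g}--\eqref{Constrain-Codazzi}:
\begin{align*}
F_{ij} &:= \p_t \tilde k_{ij} - \tilde R_{ij} + 2 \tilde k_i^p \tilde k_{jp} - h \tilde k_{ij}, \qquad H := h - \tr_{\tilde g} \tilde k, \\
C_0 &:= \tilde R - |\tilde k|^2 + h^2, \qquad C_j := \tilde \nabla^i \tilde k_{ij} - \tilde \nabla_j h.
\end{align*}
The hypotheses $h_0 = \tr_{\tilde g_0} \tilde k_0$ and the Gauss--Codazzi constraints at $t=t_0$ give $H|_{t_0} = C_0|_{t_0} = C_j|_{t_0} = 0$. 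The initial value $\tilde k_1$ of $\p_t \tilde k$ is implicitly determined by \eqref{pt- k} on the initial slice (that is, $\tilde k_1 = \tilde R[\tilde g_0]_{ij} - 2 \tilde k_{0,i}^p \tilde k_{0,jp} + h_0 \tilde k_{0,ij}$); this is the only reading consistent with the promotion of the first-order equation to a second-order wave equation, and it yields $F_{ij}|_{t_0} = 0$.

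The bulk of the proof is to show that $(F, H, C_0, C_j)$ satisfies a closed homogeneous linear system with coefficients depending only on the already-known $(\tilde g, h, \tilde k)$. A direct computation using $\p_t h = |\tilde k|^2$ and $\p_t \tilde g^{ij} = 2\tilde k^{ij}$ gives $\p_t H = -\tr_{\tilde g} F$. For $\p_t F_{ij}$, I differentiate the definition of $F$ in time, substitute the wave equation of \eqref{E:reduced-sys} for $\p_t^2 \tilde k_{ij}$, and use the linearization of Ricci along $\p_t \tilde g = -2\tilde k$,
\[
\p_t \tilde R_{ij} = \Delta_L \tilde k_{ij} + \tilde \nabla_i \tilde \nabla_j (\tr_{\tilde g} \tilde k) - \tilde \nabla_i (\tilde \nabla^p \tilde k_{jp}) - \tilde \nabla_j (\tilde \nabla^p \tilde k_{ip}).
\]
The reduced wave equation is designed precisely so that, after these substitutions, the Laplacian of $\tilde k$ and the curvature contractions cancel, and the surviving Hessian and divergence terms reassemble into an expression linear in $F$, $H$, and the spatial derivatives of $C_j$. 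The propagation equations for $C_0$ and $C_j$ are obtained from the twice-contracted second Bianchi identity for $\breve g$, with $F$ entering as a source wherever one would otherwise insert \eqref{pt- k}.

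With the closed linear system in hand and vanishing Cauchy data, a standard Gr\"onwall/energy argument on finite time intervals forces $(F, H, C_0, C_j) \equiv 0$. Therefore $h = \tr_{\tilde g} \tilde k$ everywhere, the first-order evolution \eqref{pt- k} is recovered, and the Gauss--Codazzi constraints \eqref{Constrain-Guass}--\eqref{Constrain-Codazzi} hold for all $t \geq t_0$. Combined with \eqref{pt-g} (which is already part of \eqref{E:reduced-sys}), this is exactly the ADM system, so $\breve g = -dt^2 + \tilde g$ is a vacuum solution.

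The main obstacle is the derivation of the propagation equation for $F_{ij}$: one must verify by careful bookkeeping that the Lichnerowicz Laplacian in $\p_t \tilde R_{ij}$ exactly cancels the $\Delta \tilde k$ of the reduced wave equation, that the Hessian $\tilde \nabla_i \tilde \nabla_j (\tr_{\tilde g} \tilde k)$ cancels the $\tilde \nabla_i \tilde \nabla_j h$ term modulo $\tilde \nabla^2 H$, and that the divergence leftovers appear only through the combination $\tilde \nabla_i \tilde \nabla_j h - \tilde \nabla_i (\tilde \nabla^p \tilde k_{jp}) = \tilde \nabla_i C_j$. This is essentially the reverse of the calculation that originally produced \eqref{E:reduced-sys} by differentiating \eqref{pt- k} and invoking \eqref{Constrain-Codazzi}.
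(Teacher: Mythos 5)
Your overall strategy (constraint propagation with vanishing initial data) is the same in spirit as the paper's, and your reading of the data for $\tilde k_1$ is the intended one, but the central analytic claim has a genuine gap. The quantities $(F_{ij},H,C_0,C_j)$ do \emph{not} satisfy a first-order system to which a ``standard Gr\"onwall'' applies: carrying out the computation you sketch, the principal part of $\p_t F_{ij}$ is $\tilde\nabla_i\tilde\nabla_j H+\tilde\nabla_i C_j+\tilde\nabla_j C_i$, the principal part of $\p_t C_0$ is $-2\Delta_{\tilde g} H-2\tilde\nabla^jC_j$ (plus algebraic $F$-terms), and $\p_t C_j$ contains $\tilde\nabla^iF_{ij}+\tfrac12\tilde\nabla_jC_0$. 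Thus second spatial derivatives of $H$ and first derivatives of $C_j$, $F$, $C_0$ source the system, while $H$ itself is only fed by undifferentiated quantities; an $L^2$ Gr\"onwall on $(F,H,C_0,C_j)$ loses derivatives and does not close, and no weighted hierarchy $\|H\|_{H^{s+2}}+\|F\|_{H^s}+\cdots$ closes either, since $\p_t H$ then demands $\|F\|_{H^{s+2}}$, $\|C_0\|_{H^{s+2}}$. (A smaller slip: with your definitions $\p_t H=-\tr_{\tilde g}F-C_0+hH$, not $-\tr_{\tilde g}F$; this matters because it is exactly the combination $\tr_{\tilde g}F+C_0$ whose time derivative produces $-\Delta_{\tilde g}H$ with the \emph{good} sign.)

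What is missing is precisely the paper's key observation: in the third equation of \eqref{E:reduced-sys} the curvature terms $-2\tilde R_i{}^m{}_j{}^l\tilde k_{ml}+\tilde R_{im}\tilde k_j^m+\tilde R_{jm}\tilde k_i^m$ are trace-free, so taking the $\tilde g$-trace of the wave equation and combining with $\p_t h=|\tilde k|^2$ yields a \emph{self-contained homogeneous wave equation} for $\tr_{\tilde g}\tilde k-h$ (schematically $\p_t^2(\tr_{\tilde g}\tilde k-h)-\Delta_{\tilde g}(\tr_{\tilde g}\tilde k-h)=2\,\tr_{\tilde g}\tilde k\,\p_t(\tr_{\tilde g}\tilde k-h)$), whose Cauchy data vanish by $h_0=\tr_{\tilde g_0}\tilde k_0$ and the Hamiltonian constraint \eqref{Constrain-Guass} (the latter gives $\p_t(\tr_{\tilde g}\tilde k-h)|_{t_0}=0$ once $\tilde k_1$ is induced from \eqref{pt- k}). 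A wave-equation energy estimate then gives $h\equiv\tr_{\tilde g}\tilde k$ \emph{first}; this in turn yields $\breve R_{tt}=0$, and the remaining constraints are propagated not through your $(F,C_0,C_j)$ but through the Einstein tensor components $\Theta_i=G_{it}$, $\Xi_i{}^j=G_i{}^j$, for which the contracted Bianchi identity together with the wave equation of \eqref{E:reduced-sys} gives a homogeneous transport/wave system, closed again by hyperbolic energy estimates (this is the Fournodavlos--Luk argument the paper invokes). If you wish to keep your variables, you must exhibit this wave structure explicitly (e.g.\ by working with $H$ and $\tr_{\tilde g}F+C_0$ and showing the $\tilde\nabla\cdot C$ contributions cancel) rather than appealing to a generic closed-linear-system Gr\"onwall; as written, the decisive step of the proof is asserted rather than established.
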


\begin{proof}
Observe that, in the third equation of \eqref{E:reduced-sys}, the term involving curvature $- 2 \ti R_{i}{}^{m}{}_{j}{}^{l} \ti k_{m l} + \ti R_{i m} \ti k_{j}^n + \ti R_{j m} \ti k_i^m$ is trace-free. Consequently, one can take trace on the wave equation of $\tilde k$ and substitute it into the second equation of \eqref{E:reduced-sys} to derive a homogeneous wave equation for $\tr_{\tilde g} \tilde k - h$ and thus infer that $\tr_{\tilde g} \tilde k = h$. 
The rest of proof follows in the same manner as \cite{Fournodavlos-Luk-Kasner}, we omit the details.
\end{proof}

After that, one can use the reduced system \eqref{E:reduced-sys} to prove a local existence theorem  \cite{Fournodavlos-Luk-Kasner}. 
\begin{theorem}\label{thm-local-existence}
Let $(M, \, g_0)$ be a smooth complete Riemannian manifold with positive injective radius and its Ricci curvature is bounded from below. We fix an integer $N > \frac{n}{2}$, and suppose $\ga$ is an Einstein metric on $M$ with negative Einstein constant.
 
Let $(g_0, \, k_0)$ be the data  on $\{t_0\} \times M$, $t_0 >0$, for the rescaled vacuum Einstein equations and decompose the  symmetric $(0,2)$-tensor $k_{0ij}$ into the trace-free and trace parts: $k_{0ij} = \Sigma_{0ij} + \frac{\tr_{g_0} k_0}{n} g_0$, with $\tr_{g_0} k_0 = g_0^{i j} k_{0 ij}$.  
If the data verify that 
\alis{
& g_{0} - \gamma \in H_{N+2} (M, \, g_0), \quad  \Sigma_{0} \in H_{N+1}(M, g_0), \\
&  \tr_{g_0} k_0 + n  \in  H_{N+2}(M, g_0),
}
then there is a unique, local-in-time development $(\mathcal{M}, \, \breve g)$ with 
\[\mathcal{M} = [t_{0}, t_{\ast}] \times M, \quad \breve g = -dt^2 + t^2 g(t),\] 
and $t =t_0$ corresponding to the initial slice $(M, \, t_0^2 \cdot g_0)$. Moreover, denoting $\Sigma_{ij}$, $\tr_g k$, the trace-free and trace parts of $k_{ij}$ respectively,
we have
\alis{
g_{ij}(t) - \gamma_{ ij} & \in C^1([t_0, t_\ast], H_{N+2}(M, g_0)), \\
\Sigma_{ij} (t) & \in C^1([t_0, t_\ast], H_{N+1}(M, g_0)), \\
 \tr_g k  (t) + n & \in C^1([t_0, t_\ast], H_{N+2}(M, g_0)). 
}
\end{theorem}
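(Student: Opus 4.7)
The plan is to derive local existence for the reduced system \eqref{E:reduced-sys}, which is a genuine hyperbolic system (a wave equation for $\tilde k$ coupled with transport equations for $\tilde g$ and $h$), and then to invoke Lemma \ref{equi-reduce-EKG} to transfer back to a solution of the vacuum Einstein equations in Gaussian normal gauge. Throughout, I will work with the unrescaled variables $(\tilde g,\tilde k,h)$ on $[t_0,t_\ast]\times M$ and convert to the rescaled variables $(g,k)$ only at the end using \eqref{rescale-metric}--\eqref{rescale-2}; the Sobolev norms with respect to $g_0$ specified in the statement translate into equivalent norms with respect to the initial unrescaled metric $t_0^2 g_0$, since the two differ by an $\varepsilon$-small (hence invertible) conformal factor that is constant in space.

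First, I would fix a background connection coming from $\gamma$ and work in function spaces of the form $\gamma+H_{N+2}(M,g_0)$, $-\gamma+H_{N+1}(M,g_0)$, $-n+H_{N+2}(M,g_0)$ for $(\tilde g,\tilde k,h)$ after the trivial time rescaling to $t_0$. The non-compactness of $M$ is handled precisely by the assumption that $(M,g_0)$ has positive injectivity radius and Ricci bounded from below: this gives uniform Sobolev embeddings (Proposition \ref{prop-Sobolev}), the multiplication rules (Proposition \ref{prop-Multiplication}), the density theorem (Corollary \ref{coro-density}), and the equivalence $\|\cdot\|_{H_k}\sim\|\cdot\|_{H'_k}$ (Proposition \ref{prop-elliptic-Delta}), all of which are needed both for the existence argument and for the energy estimates. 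Next, I would run the standard iteration scheme for the reduced system \eqref{E:reduced-sys}: define a sequence $(\tilde g^{(m)},h^{(m)},\tilde k^{(m)})$ by linearising at iteration $m-1$, so that $\tilde k^{(m)}$ solves a linear wave equation on the Riemannian background $(M,\tilde g^{(m-1)})$ with coefficients built from lower iterates and a right hand side in $H_N$; simultaneously, $\tilde g^{(m)}$ and $h^{(m)}$ are obtained by integration in time from the first two transport equations. Standard energy estimates for the linear wave equation at level $H_{N+1}(M,g_0)$, together with the multiplication rules, yield a uniform-in-$m$ bound on a sufficiently short time interval $[t_0,t_\ast]$, and a contraction estimate at one derivative less provides convergence in a lower Sobolev space. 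A standard bootstrap upgrades the convergence to the full regularity. Uniqueness follows from the same contraction estimate applied to two solutions.

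The critical step is then propagation of the constraints. By Lemma \ref{equi-reduce-EKG}, once the initial data satisfy $h_0=\tr_{\tilde g_0}\tilde k_0$ and the Gauss--Codazzi constraints \eqref{Constrain-Guass}--\eqref{Constrain-Codazzi}, the reduced system yields an actual vacuum solution. The identity $h=\tr_{\tilde g}\tilde k$ is preserved by the homogeneous wave equation for $\tr_{\tilde g}\tilde k-h$ derived in the proof of Lemma \ref{equi-reduce-EKG}, so only \eqref{Constrain-Guass}--\eqref{Constrain-Codazzi} need to be verified along the flow. For these, I would introduce the components $\Theta_i$, $\Xi_i^j$ of the Einstein tensor and, using twice-contracted Bianchi identities together with the wave equation for $\tilde k$, derive a closed linear wave system for $(\Theta,\Xi)$ with zero initial data (the initial data were chosen to satisfy the constraints); a standard energy estimate forces $\Theta=0,\Xi=0$ on $[t_0,t_\ast]$. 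I then check that the Hamiltonian and momentum constraints on the initial slice, which for the rescaled data $(g_0,k_0)$ are equivalent to the Gauss--Codazzi equations via \eqref{Gauss-Ricci-trace-hat -k} and \eqref{Codazzi-div-k}, follow automatically from the assumption that $(M,g_0,k_0)$ is a rescaled data set.

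The main obstacle will be verifying the linear energy estimates and the contraction step in a setting where the background $\tilde g^{(m-1)}$ is merely in $\gamma+H_{N+2}(M,g_0)$ rather than having explicit decay or curvature bounds, and where $M$ is non-compact. Here the hypotheses on $(M,g_0)$ and the closeness of $(g_0,k_0)$ to $(\gamma,-\gamma)$ in $H_{N+2}$ guarantee that, throughout a short time interval, $\tilde g^{(m)}$ remains uniformly equivalent to $\gamma$, the Ricci tensor of $\tilde g^{(m)}$ is uniformly bounded below, and the volume non-collapsing property persists, so that the analytic toolkit of Section \ref{preliminaries} applies uniformly in $m$. With these uniform bounds in hand, the remaining analysis is a routine adaptation of the Fournodavlos--Luk local theory \cite{Fournodavlos-Luk-Kasner} to the non-compact setting, with $\gamma$ playing the role of a fixed reference metric that keeps all Sobolev norms well defined; the conclusion is the claimed $C^1$-in-time regularity of $g_{ij}-\gamma_{ij}$, $\Sigma_{ij}$, and $\tr_g k+n$ in the appropriate Sobolev spaces.
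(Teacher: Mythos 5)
Your proposal follows essentially the same route as the paper: the paper likewise reduces to the hyperbolic system \eqref{E:reduced-sys}, obtains local existence and uniqueness by the Fournodavlos--Luk theory (adapted to the non-compact setting via the uniform Sobolev, multiplication and density results for $(M,g_0)$), and recovers a vacuum solution through Lemma \ref{equi-reduce-EKG} with the wave-equation argument for the Einstein-tensor components propagating the constraints. The only cosmetic slip is your justification of the norm equivalence under the rescaling $g_0\mapsto t_0^2 g_0$: the conformal factor is a fixed positive constant (not an $\varepsilon$-small perturbation), so the equivalence of the Sobolev norms is immediate for that reason alone.
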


\section{The density theorem}\label{sec-density}

We recall the density theorem from \cite{Hebey}.
\begin{proposition}\label{prop-density}
Let $(M, g)$ be a smooth, complete Riemannian manifold.  

\begin{itemize}
\item  For any $p \geq 1$, $H_{0,1}^p(M) = H^p_1(M)$.
\item Assume that $(M, g)$ has positive injective radius, and $|\nabla^j R_{m n}|$, $j = 0, \cdots, K-2$, is bounded, where $K \geq 2$ is an integer. Then for any $p \geq 1$, $H_{0, K}^p(M) = H^p_K(M)$.
\end{itemize}
\end{proposition}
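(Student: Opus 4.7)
The plan is to prove both statements by the classical cutoff-and-mollify method applied to a tensor field $\Psi \in H^p_K(M)$. The key step is to construct a sequence of smooth scalar cutoffs $\eta_j \in C_c^\infty(M)$, with $0 \leq \eta_j \leq 1$, $\eta_j = 1$ on a growing exhaustion of $M$, and satisfying uniform pointwise bounds $|\nabla^i \eta_j|_g \leq C_i$ for every $i = 1, \ldots, K$. Once such $\eta_j$ are in hand, the Leibniz rule together with dominated convergence will force $\eta_j \Psi \to \Psi$ in $H^p_K$, after which a standard local mollification in coordinate charts replaces $\eta_j \Psi$ (compactly supported but not yet smooth) by genuinely smooth compactly supported approximants.

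For the first statement, only completeness is needed. Fix $x_0 \in M$; by Hopf--Rinow every closed ball $\overline{B}(x_0,R)$ is compact. Starting from the Lipschitz distance $d(x_0,\cdot)$ and applying a Greene--Wu type smoothing at a fixed small scale, one produces $\tilde\rho \in C^\infty(M)$ with $|\tilde\rho - d(x_0,\cdot)| \leq 1$ and $|\nabla \tilde\rho|_g \leq 2$ globally. Taking $\phi \in C_c^\infty(\mathbb{R})$ with $\phi \equiv 1$ on $[0,1]$ and $\phi \equiv 0$ on $[2,\infty)$, set $\eta_j = \phi(\tilde\rho/j)$. Then $|\nabla \eta_j|_g \leq C/j$. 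Writing
\[
\nabla(\eta_j \Psi) - \nabla \Psi = (\eta_j - 1)\nabla \Psi + \Psi \otimes \nabla \eta_j,
\]
the first summand tends to $0$ in $L^p$ by dominated convergence, while the second is controlled by $(C/j)\,\|\Psi\|_{L^p(\{\tilde\rho \geq j\})} \to 0$. The $L^p$-convergence of $\eta_j \Psi$ to $\Psi$ is immediate from dominated convergence, giving $\eta_j \Psi \to \Psi$ in $H^p_1$.

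For $K \geq 2$ the same scheme works, but now the cutoffs must satisfy $|\nabla^i \eta_j|_g \leq C_i$ for every $i \leq K$, and this is where the two geometric hypotheses enter and where the main obstacle lies. The positive injectivity radius $r_0 > 0$ permits a locally finite atlas of normal-coordinate charts of uniform size $r_0/2$, and the bounds on $|\nabla^j R|$ for $j \leq K-2$ translate, through the Jacobi-field expansion of the metric, into uniform $C^K$-bounds on the metric components and their Christoffel symbols in these coordinates. Using a partition of unity subordinate to such an atlas and a Greene--Wu-style smoothing of $d(x_0,\cdot)$ (or alternatively the harmonic-radius framework of Anderson--Cheeger), one obtains $\tilde\rho$ with $|\nabla^i \tilde\rho|_g \leq C_i$ for $1 \leq i \leq K$. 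The chain rule then yields
\[
|\nabla^i \eta_j|_g \leq \sum_{\ell = 1}^{i} \frac{\|\phi^{(\ell)}\|_\infty}{j^\ell}\, P_{i,\ell}\!\left(|\nabla \tilde\rho|_g, \ldots, |\nabla^i \tilde\rho|_g\right) \leq \frac{C'_i}{j},
\]
for universal polynomials $P_{i,\ell}$. Expanding $\nabla^i(\eta_j \Psi) - \nabla^i \Psi$ via Leibniz and estimating each term exactly as in the first-order case shows convergence in $L^p$ for every $i \leq K$. A final routine step mollifies $\eta_j \Psi$ chart-by-chart (using the uniform size of the normal-coordinate charts to keep the mollifier scale meaningful) to produce the desired $C_c^\infty$ approximations, completing the argument.
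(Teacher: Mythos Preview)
The paper does not supply its own proof of this proposition; it simply records it as a known result cited from Hebey's monograph \cite{Hebey}. Your sketch follows exactly the standard cutoff-and-mollify argument given there: one uses completeness (for $K=1$) or the positive injectivity radius together with the curvature-derivative bounds (for $K\geq 2$) to build a smooth distance-like function with controlled derivatives up to order $K$, composes with a bump function to obtain uniformly $C^K$-bounded cutoffs $\eta_j$, applies Leibniz and dominated convergence to get $\eta_j\Psi\to\Psi$ in $H^p_K$, and finishes with a local mollification in normal charts. So your proposal is correct and coincides with the argument the paper is implicitly invoking.

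One small caution: you phrase the curvature hypothesis as bounds on ``$|\nabla^j R|$'' and then invoke the Jacobi-field expansion to control the metric in normal coordinates, which really uses the full Riemann tensor. The proposition as stated writes $R_{mn}$, the Ricci tensor. In Hebey's treatment the hypothesis is on the Riemann curvature and its covariant derivatives, and that is what your argument actually consumes; be aware of this discrepancy in notation when citing the result.
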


We will make use of the above results to establish a density theorem for our purpose.

\begin{proposition}\label{pro-density}
Let $(M, g)$ be an $n$-dimensional smooth, complete Riemannian manifold with positive injective radius and its Ricci curvature is bounded from below. 

Fix an integer $N > \frac{n}{2}$. Suppose $\gamma$ is an Einstein metric on $M$ with negative Einstein constant and assume that \[g  - \gamma  \in H_{N+2} (M, g),  \] then
\alis
{H_{0,k}(M) &= H_{k}(M),  \quad k \leq N+2.
}

\end{proposition}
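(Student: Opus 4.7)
The plan is to reduce the density claim for $(M,g)$ to the Einstein background $(M,\gamma)$, where the hypotheses of Proposition \ref{prop-density}(b) are trivial to verify, and then transfer the conclusion back to $g$ via equivalence of Sobolev norms. I will apply Proposition \ref{prop-density}(b) with $K=N+2$ to $(M,\gamma)$, then transport density to $(M,g)$.

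\textbf{Step 1 (Pointwise control of $h=g-\gamma$).} Set $h=g-\gamma\in H_{N+2}(M,g)$. Since $N>\tfrac{n}{2}$, Proposition \ref{prop-Sobolev} applied on $(M,g)$ gives $H_{N+2}(M,g)\hookrightarrow C^m_B(M,g)$ for some $m\geq 0$, and in particular $h\in L^\infty(M,g)$. Combined with the hypotheses that $(M,g)$ has positive injectivity radius and Ricci bounded from below, one argues (as in the bootstrap argument for Corollary \ref{coro-density}) that $g$ and $\gamma$ define uniformly equivalent bilinear forms on $M$: there is a constant $C>0$ with $C^{-1}\gamma\le g\le C\gamma$. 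Consequently $(M,\gamma)$ is also complete with positive injectivity radius, and the two families of Sobolev norms
\[
\|\cdot\|_{H_k(M,g)}\quad\text{and}\quad \|\cdot\|_{H_k(M,\gamma)},\qquad 0\le k\le N+2,
\]
are equivalent. This last equivalence requires controlling the difference of connections $T^{a}_{mn}=\Gamma[g]^a_{mn}-\Gamma[\gamma]^a_{mn}=\tfrac{1}{2}g^{ab}\bigl(\nabla[\gamma]_m h_{bn}+\nabla[\gamma]_n h_{mb}-\nabla[\gamma]_b h_{mn}\bigr)$ and its iterated derivatives up to order $N+1$ in appropriate Sobolev spaces, using Proposition \ref{prop-Multiplication} (for $N>n/2$), together with the $L^\infty$-bound on $h$ and the smoothness of $\gamma$.

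\textbf{Step 2 (Density on the Einstein background).} The Einstein condition $R[\gamma]_{ij}=-(n-1)\gamma_{ij}$ implies that $|R[\gamma]|_\gamma^2=n(n-1)^2$ is constant and $\nabla[\gamma]^jR[\gamma]\equiv 0$ for every $j\ge 1$. Thus $|\nabla[\gamma]^jR[\gamma]|_\gamma$ is uniformly bounded on $M$ for every $j\ge 0$. Combined with the positive injectivity radius of $(M,\gamma)$ from Step 1, Proposition \ref{prop-density}(b) applies to $(M,\gamma)$ with any $K\ge 2$, yielding
\[
H_{0,k}^p(M,\gamma)=H_k^p(M,\gamma)\qquad\text{for all } k\ge 0,\ p\ge 1.
\]
In particular this holds for $p=2$ and $k\le N+2$.

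\textbf{Step 3 (Transfer to $g$).} Given $f\in H_k(M,g)$ with $k\le N+2$, Step 1 yields $f\in H_k(M,\gamma)$ with an equivalent norm. By Step 2 there exist $\phi_\ell\in C^\infty_c(M)$ with $\phi_\ell\to f$ in $H_k(M,\gamma)$, hence also in $H_k(M,g)$ by the norm equivalence. Therefore $f\in H_{0,k}(M,g)$, proving the density $H_{0,k}(M,g)=H_k(M,g)$.

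\textbf{Main obstacle.} The technical heart is Step 1, namely promoting the Sobolev regularity $h\in H_{N+2}(M,g)$ to uniform equivalence of $g$ and $\gamma$ as bilinear forms and to the equivalence of the two scales of Sobolev norms. The $L^\infty$-control on $h$ from Sobolev embedding is what makes this possible, but genuine care is needed to compare derivatives taken with respect to $\nabla[g]$ and $\nabla[\gamma]$ when converting one norm into the other, via the difference-of-connections tensor $T$ and its iterated covariant derivatives. Once this equivalence is in hand, Steps 2 and 3 are essentially book-keeping.
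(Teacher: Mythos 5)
Your proposal follows essentially the same route as the paper's own proof: establish the equivalence of the Sobolev norms $\|\cdot\|_{H_k(M,g)}$ and $\|\cdot\|_{H_k(M,\gamma)}$ for $k\le N+2$ via the difference-of-connections identities and the multiplication rules, invoke the density theorem (Proposition \ref{prop-density}) on the Einstein background $(M,\gamma)$, where the Ricci-derivative bounds are automatic, and transfer the density back to $(M,g)$. The only deviation is that you make explicit the uniform bilinear-form equivalence of $g$ and $\gamma$ and the positive injectivity radius of $(M,\gamma)$ — points the paper leaves implicit — and your justification there is slightly loose (the $L^\infty$ bound on $g-\gamma$ alone does not give a uniform lower bound of $\gamma$ against $g$; one should additionally use the decay at infinity furnished by the $H_{N+2}$ bound, or the smallness available in the application), but this does not alter the argument's structure.
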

\begin{proof}
Note that\footnote{We use the notation $\nabla[g]$ ($\nabla[\ga]$) to point out the covariant derivative corresponds to the metric $g$ ($\ga$).} $\nabla[\gamma] g = \nabla[g] \gamma * g$. Then $g - \gamma \in H_{N+2}$, $N > \frac{n}{2}$, implies
\alis{
\sum_{1\leq k \leq N+2}\| \(\nabla[g]\)_{I_k} \gamma_{i j}\|_{L^2} + \sum_{1 \leq k\leq N+2} \| \(\nabla [\gamma]\)_{I_k} g_{i j}\|_{L^2(M, \gamma)} & \leq C.
}
Moreover, for any tensor field $\Ps$ on $M$, the following identities hold
\alis{
\(\n[g]\)_{I_k} \Psi ={}& (\n[\ga])_{I_k} \Psi + \sum_{a+b=k-1} (\n[\ga])_{I_a} \n[\ga] g \ast (\n[\ga])_{I_b} \Psi, \\
\(\n[\ga]\)_{I_k} \Psi ={}& (\n[g])_{I_k} \Psi + \sum_{a+b=k-1} (\n[g])_{I_a} \n[g] \ga \ast (\n[g])_{I_b} \Psi. 
}
Therefore, we can prove, using the Sobolev inequalities, that for any tensor field $\Psi$ on $M$, the two norms are equivalent
\alis{
\| \Psi\|_{H_{k} (M, g)} & \sim \| \Psi\|_{H_{k} (M, \gamma)}, \quad k\leq N+2, 
}
and hence
\alis{
 H_{k} ( M, g) &=  H_{k} (M, \gamma), &&  \quad k\leq N+2,  \\
  H_{0,k} ( M, g) &=  H_{0, k} (M, \gamma), &&   \quad k\leq N+2.
 }
By the density theorem on $(M, \gamma)$ (referring to Proposition \ref{prop-density}), \[H_{0,k} (M, \gamma) =  H_{k} ( M, \gamma), \quad k \leq N+2, \] we conclude the claim.
\end{proof}

 \section{Some identities}
 \subsection{Commuting identity}\label{sec-comm}
Let $\Gamma_{ij}^a$ be the connection coefficient of $\nabla$. Then the Lie derivative $\lie_{\dtau} \Gamma_{ij}^a$ is a tensor field
\begin{equation}\label{dtau-Gamma}
\begin{split}
\lie_{\dtau} \Gamma^a_{ij}& =\frac{1}{2}g^{ab} \left( \nabla_i \lie_{\dtau} g_{jb} + \nabla_j \lie_{\dtau} g_{ib} -\nabla_b \lie_{\dtau} g_{ij} \right).
\end{split}
\end{equation}

A commuting identity between $\nabla$ and $\lie_{\dtau}$ is given as follows.
\begin{lemma}\label{lem-commu-lie}
Let $\Psi$ be an arbitrary $(0, k)$-tensor field on $(M, g)$. The following commuting formula is true:
\begin{equation}\label{commuting-lie-nabla}
\lie_{\dtau} \nabla_j \Psi_{a_1 \cdots a_k} = \nabla_j \lie_{\dtau} \Psi_{a_1 \cdots a_k} - \sum_{i=1}^k \lie_{\dtau} \Gamma^p_{j a_i} \Psi_{a_1 \cdots p \cdots a_k}.
\end{equation}
\end{lemma}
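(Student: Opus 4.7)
The plan is to verify the commuting identity by a direct componentwise calculation in the coordinate system $(\tau, x^1, \ldots, x^n)$ adapted to the rescaled spacetime metric $\bar g$ (see \eqref{def-rescale-spacetime}). In this chart $\dtau$ is itself a coordinate vector field and $[\dtau, \partial_j] = 0$, so the Lie derivative $\lie_{\dtau}$ acts on the components of any $(0,k)$-tensor simply as the partial derivative $\partial_\tau$. This reduction is the whole point of choosing these coordinates here and avoids the frame-dependent correction terms one would otherwise have to track.

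Concretely, I would first write out the standard coordinate expression
\[
\nabla_j \Psi_{a_1 \cdots a_k} = \partial_j \Psi_{a_1 \cdots a_k} - \sum_{i=1}^k \Gamma^p_{j a_i} \Psi_{a_1 \cdots p \cdots a_k},
\]
apply $\partial_\tau$ to both sides, and use $[\partial_\tau, \partial_j] = 0$ to move $\partial_\tau$ through $\partial_j$. Subtracting from this the coordinate expression for $\nabla_j \lie_{\dtau} \Psi_{a_1 \cdots a_k} = \partial_j \partial_\tau \Psi_{a_1 \cdots a_k} - \sum_i \Gamma^p_{j a_i} \partial_\tau \Psi_{a_1 \cdots p \cdots a_k}$ then leaves exactly the terms $-\sum_{i} (\partial_\tau \Gamma^p_{j a_i}) \Psi_{a_1 \cdots p \cdots a_k}$, which, once we identify $\partial_\tau \Gamma^p_{j a_i}$ with the tensor $\lie_{\dtau} \Gamma^p_{j a_i}$, is precisely the right-hand side of \eqref{commuting-lie-nabla}.

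The only point requiring minor justification is that the quantity $\partial_\tau \Gamma^p_{j a_i}$ computed in this chart really does coincide with the intrinsic tensor field defined by \eqref{dtau-Gamma}. This is standard: the difference of two connections is tensorial, so the first-order variation of $\Gamma$ along the flow of $\dtau$ is automatically a tensor field; differentiating the Koszul formula $\Gamma^a_{ij} = \tfrac{1}{2} g^{ab}(\partial_i g_{jb} + \partial_j g_{ib} - \partial_b g_{ij})$ in $\tau$, together with $\lie_{\dtau} g_{ij} = \partial_\tau g_{ij}$ in the chosen coordinates, reproduces \eqref{dtau-Gamma} after one replaces the appearing $\partial_i$ by $\nabla_i$ using that $\lie_{\dtau} g$ is a tensor (the Christoffel contributions cancel). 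There is no substantive obstacle in the proof; the lemma is really a bookkeeping identity whose value lies in having a clean tensorial error term $\lie_{\dtau} \Gamma \ast \Psi$ that can be iterated when commuting $\lie_{\dtau}$ past high-order covariant derivatives in the energy estimates of Section~\ref{sec-Energy-estimates}.
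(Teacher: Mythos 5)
Your proof is correct: the paper itself only remarks that the lemma ``can be proved by straightforward calculations,'' and your componentwise computation in the coordinates $(\tau, x^i)$, where $\lie_{\dtau}$ acts on components as $\partial_\tau$ and $\partial_\tau\Gamma^p_{ja_i}$ coincides with the tensorial variation formula \eqref{dtau-Gamma}, is exactly that straightforward calculation. Nothing further is needed.
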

This lemma can be proved by straightforward calculations.
We apply Lemma \ref{lem-commu-lie} to $\nabla_{I_l}  \Psi$, and take \eqref{eq-evolution-g} into account. It then follows that
\begin{lemma}\label{lemma-commuting-application}
Let $l \geq 1$ be an integer. Then 
\begin{align}
  \lie_{\dtau}  \nabla_{I_l} \Psi &=  \nabla_{I_l} \lie_{\dtau}  \Psi +  [ \dtau, \nabla_{I_l}] \Psi, \label{id-commuting-nabla-N-T-l-simplify}
\end{align}
where 
\begin{align}
  [\dtau, \nabla_{I_l}] \Psi &= \sum_{a+1 +b  = l}  \nabla_{I_a}  \left( \nabla \Sigma, \, \nabla \eta \right) * \nabla_{I_{b}}  \Psi, \quad l \geq 1. \label{def-commuting-KN-l}
\end{align}
\end{lemma}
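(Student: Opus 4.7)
The plan is to prove the identity by induction on $l$, starting from the single-derivative commutator of Lemma \ref{lem-commu-lie}. For $l=1$, the formula \eqref{commuting-lie-nabla} directly gives
\alis{
\lie_{\dtau} \nabla_j \Psi_{a_1\cdots a_k} = \nabla_j \lie_{\dtau} \Psi_{a_1\cdots a_k} - \sum_{i=1}^k \lie_{\dtau} \Ga^p_{j a_i} \Psi_{a_1\cdots p\cdots a_k},
}
so it suffices to understand the structure of $\lie_{\dtau} \Ga^p_{jq}$. Substituting \eqref{eq-evolution-g}, namely $\lie_{\dtau} g_{ij} = -2\eta g_{ij} - 2\Sigma_{ij}$, into \eqref{dtau-Gamma} yields
\alis{
\lie_{\dtau} \Ga^a_{ij} = - g^{ab}\(\nabla_i \Sigma_{jb} + \nabla_j \Sigma_{ib} - \nabla_b \Sigma_{ij}\) - \( \de^a_j \nabla_i \eta + \de^a_i \nabla_j \eta - g_{ij} \nabla^a \eta\),
}
which schematically is of the form $\nabla \Sigma + \nabla \eta$. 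Hence the $l=1$ case establishes \eqref{id-commuting-nabla-N-T-l-simplify}--\eqref{def-commuting-KN-l} with $a=0$, $b=0$.

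For the inductive step, assume the identity holds for $l-1$. Applying the $l=1$ formula to the tensor $\nabla_{I_{l-1}} \Psi$ gives
\alis{
\lie_{\dtau} \nabla_{I_l} \Psi = \nabla_{i_l} \lie_{\dtau} \nabla_{I_{l-1}} \Psi + \lie_{\dtau} \Ga \ast \nabla_{I_{l-1}} \Psi,
}
where the second term has the correct product structure with $a=0$, $b=l-1$. Then by the inductive hypothesis
\alis{
\lie_{\dtau} \nabla_{I_{l-1}} \Psi = \nabla_{I_{l-1}} \lie_{\dtau} \Psi + \sum_{a'+1+b' = l-1} \nabla_{I_{a'}}(\nabla \Sigma, \nabla \eta) \ast \nabla_{I_{b'}} \Psi,
}
and applying $\nabla_{i_l}$ and the Leibniz rule reorganizes the right-hand side into $\nabla_{I_l} \lie_{\dtau} \Psi$ plus a sum of terms of the form $\nabla_{I_{a}} (\nabla \Sigma, \nabla \eta) \ast \nabla_{I_b} \Psi$ with either $a = a'+1, b=b'$ or $a=a', b=b'+1$; in either case $a+1+b = l$, as claimed.

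The only mildly delicate point is making sure no terms outside the stated schema appear when distributing $\nabla_{i_l}$ and when combining the $\nabla_{i_l}$ derivative of $\lie_{\dtau}\Ga$ terms from the hypothesis --- but each such differentiation simply increases either $a$ or the order inside $\nabla \Sigma, \nabla \eta$ without spoiling the bound $a+1+b = l$. I do not anticipate this being a real obstacle; the argument is essentially bookkeeping on the index pattern.
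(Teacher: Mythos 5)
Your proposal is correct and follows essentially the same route as the paper: the paper's proof is precisely to apply Lemma \ref{lem-commu-lie} (iteratively) to $\nabla_{I_l}\Psi$ and substitute \eqref{eq-evolution-g} into \eqref{dtau-Gamma} so that $\lie_{\dtau}\Gamma$ is schematically $\nabla\Sigma + \nabla\eta$. Your explicit induction, with the Leibniz-rule bookkeeping showing $a+1+b=l$ is preserved, is just a spelled-out version of that argument (your peeling of the derivative labeled $i_l$ rather than $i_1$ is an immaterial relabeling).
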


We also present a commuting identity between $\nabla$ and $\Delta$, which can be proved by induction. 
\begin{lemma}\label{lemma-commuting-nabla-laplacian}
 Let  $l \geq 1$ be an integer. For any $(0, r)$-tensor $\Psi_{I_r},$
\begin{align}
 \Delta \nabla_{I_l} \Psi  &= \nabla_{I_l} \Delta \Psi +  \sum_{a+b = l} \nabla_{I_a} R_{imjn} * \nabla_{I_b} \Psi. \label{Def-R-Psi-ij-commute-nabla-laplacian}
\end{align}
In particular,
\ali{comm-2}{
&\nabla_a \Delta \Psi_{I_r} = \Delta \nabla_a \Psi_{I_r} - R_a^p \nabla_p \Psi_{I_r} \nnb\\
& \quad + \sum_{k=1}^r 2 R_{a p i_k}{}^{i_q} \nabla^p \Psi_{i_1 \cdots i_q \cdots i_r} + \sum_{k=1}^r  \nabla^p R_{a p i_k}{}^{i_q} \Psi_{i_1 \cdots i_q \cdots i_r}.
}
\end{lemma}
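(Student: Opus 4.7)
The plan is to prove the identity by induction on the order $l$, starting from the explicit base case $l=1$ that constitutes the ``in particular'' statement \eqref{E:comm-2}, and then bootstrapping to higher orders by iterating the base case on the tensor $\nabla_{I_{l-1}}\Psi$.

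For the base case $l=1$, the starting point is the definition $\Delta \Psi_{I_r}=g^{pq}\nabla_p\nabla_q\Psi_{I_r}$, so that
\[
\nabla_a \Delta \Psi_{I_r}=g^{pq}\nabla_a\nabla_p\nabla_q\Psi_{I_r}.
\]
The key tool is the Ricci identity for the commutator of covariant derivatives acting on a $(0,m)$-tensor $T$,
\[
[\nabla_a,\nabla_b]T_{j_1\cdots j_m}=-\sum_{k=1}^{m}R_{abj_k}{}^{s}T_{j_1\cdots s\cdots j_m}.
\]
I would apply this twice. First, commute the outer $\nabla_a$ past $\nabla_p$ to produce $\nabla_p\nabla_a\nabla_q\Psi$ plus a commutator $[\nabla_a,\nabla_p]\nabla_q\Psi$ (here $\nabla_q\Psi$ is viewed as a $(0,r+1)$-tensor, so the commutator generates a term from the free index $q$ and one term for each $i_k$). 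Second, commute $\nabla_a$ past $\nabla_q$ inside $\nabla_p\nabla_a\nabla_q\Psi$ to produce $\nabla_p\nabla_q\nabla_a\Psi=\Delta\nabla_a\Psi$ (after tracing with $g^{pq}$) plus $\nabla_p[\nabla_a,\nabla_q]\Psi$, and then distribute the outer $\nabla_p$ using the Leibniz rule, yielding one term with $\nabla R$ times $\Psi$ and one term with $R$ times $\nabla\Psi$.

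Tracing the resulting three curvature contributions against $g^{pq}$ produces: (i) the term $g^{pq}R_{apq}{}^{s}\nabla_s\Psi_{I_r}=R_a{}^{s}\nabla_s\Psi_{I_r}$ from the $q$-index commutator $[\nabla_a,\nabla_p]\nabla_q\Psi$; (ii) two terms of the form $g^{pq}R_{api_k}{}^{s}\nabla_q\Psi_{\cdots s\cdots}$ and $g^{pq}R_{aqi_k}{}^{s}\nabla_p\Psi_{\cdots s\cdots}$, which coincide after relabeling the dummy pair $(p,q)$ and hence double up into the factor $2$ appearing in \eqref{E:comm-2}; and (iii) the derivative-of-curvature term $g^{pq}(\nabla_p R_{aqi_k}{}^{s})\Psi_{\cdots s\cdots}=(\nabla^p R_{api_k}{}^{s})\Psi_{\cdots s\cdots}$. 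Assembling these (with signs determined by the author's Riemann convention, which is fixed by demanding consistency with the Lorentzian equations in Section~\ref{preliminaries}) produces exactly \eqref{E:comm-2}.

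For the inductive step, assume the identity holds at order $l-1$ for every tensor. Then, writing $\nabla_{I_l}\Psi=\nabla_{i_1}\nabla_{I_{l-1}}\Psi$ and applying the $l=1$ version to the $(0,r+l-1)$-tensor $\nabla_{I_{l-1}}\Psi$, I get
\[
\Delta\nabla_{I_l}\Psi=\nabla_{i_1}\Delta\nabla_{I_{l-1}}\Psi+\sum_{a+b=1}\nabla_{I_a}R_{imjn}\ast \nabla_{I_b}(\nabla_{I_{l-1}}\Psi).
\]
Using the inductive hypothesis to replace $\Delta\nabla_{I_{l-1}}\Psi$ by $\nabla_{I_{l-1}}\Delta\Psi+\sum_{a+b=l-1}\nabla_{I_a}R\ast\nabla_{I_b}\Psi$, and then distributing the outer $\nabla_{i_1}$ via the Leibniz rule, yields
\[
\Delta\nabla_{I_l}\Psi=\nabla_{I_l}\Delta\Psi+\sum_{a+b=l}\nabla_{I_a}R_{imjn}\ast\nabla_{I_b}\Psi,
\]
which is exactly \eqref{E:Def-R-Psi-ij-commute-nabla-laplacian}. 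The $\ast$-notation on the right makes the combinatorics transparent: I do not need to track the precise contraction pattern, only that every term is a contraction of one factor of $\nabla_{I_a}R$ with one factor of $\nabla_{I_b}\Psi$ with $a+b=l$.

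The only genuinely computational part is the base case, where bookkeeping of the free and dummy indices (and of the sign coming from the Ricci identity convention) must be done carefully; the inductive step is automatic once the base case and the schematic form of the right-hand side are in place. There is no analytic obstacle: everything is a pointwise algebraic manipulation of covariant derivatives, so no regularity, density, or integration-by-parts consideration is required.
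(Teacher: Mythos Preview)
Your proposal is correct and matches the paper's approach exactly: the paper states only that the lemma ``can be proved by induction'' and provides no further details, so you have supplied precisely the argument the author intended---the explicit base-case computation of \eqref{E:comm-2} via the Ricci identity, followed by the straightforward inductive step applying the $l=1$ case to $\nabla_{I_{l-1}}\Psi$ and invoking the hypothesis. There is nothing to add or correct.
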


\subsection{Equivalence between two Sobolev norms}\label{subsec-equi-norms}
We will use the commuting identities in the subsection \ref{sec-comm} to demonstrate the equivalence between $\|\cdot \|_{H_k}$ and $\| \cdot \|_{H'_{k}}$.
\bpf[Proof of Proposition \ref{prop-elliptic-Delta}]
When $k=1$, the two norms $\|\cdot \|_{H_k}$ and $\| \cdot \|_{H'_{k}}$ are identical to each other.

When $k=2$, we take a $(0,1)$-tensor $\Psi$ for instance to illustrate the idea.
 \begin{align*}
& \int_{M} |\nabla^2 \Psi |^2 \, \di \mu_g = -  \int_{M} \nabla^j \Psi^p \Delta \nabla_j  \Psi_p \, \di \mu_g \\
& \stackrel{\eqref{E:comm-2}}{=} -  \int_{M} \nabla^j \Psi^p \left( \nabla_j \Delta \Psi_{p } + R_{j}^i \nabla_i \Psi_p - 2 R_{j a p}{}^{b} \nabla^a \Psi_{b } -  \nabla^a R_{j a p}{}^{b} \Psi_{b} \right) \di \mu_g \\
& =  \int_{M} \( |\Delta \Psi|^2 -  R_{j}^i \nabla^j \Psi  \nabla_i \Psi +  R_{j a p}{}^{b} \nabla^a \Psi_{b } \nabla^j \Psi^p - R_{j a p}{}^{b}  \Psi_{b}  \nabla^a \nabla^j \Psi^p \) \di \mu_g,
 \end{align*}
 where we have used integration by parts in the last identity.
It follows that
  \begin{align*}
\|\nabla^2 \Psi\|^2_{L^2} 
& \leq   \|\Delta \Psi\|^2_{L^2} +  C \|R_{imjn}\|_{L^\infty} \|\Psi\|^2_{H_1} \\
&\quad  +C(  a^{-1} \| R_{imjn} \|^2_{L^\infty}  \| \Psi\|^2_{L^2} + a \| \nabla^2 \Psi\|^2_{L^2} ).
 \end{align*}
 We take the constant $a$ such that $a C <\frac{1}{2}$ and then obtain
   \beq\label{Bochner-2}
\|\nabla^2 \Psi\|^2_{L^2} \lesssim  \|\Delta \Psi\|^2_{L^2} +  ( \|R_{imjn}\|_{L^\infty} + \| R_{imjn} \|^2_{L^\infty} ) \| \Psi\|^2_{H_1}.
 \eeq

 In general, for $k \geq 3$, 
\begin{align*}
 & \quad \int_{M} \nabla_{I_k} \Psi \nabla^{I_k} \Psi \, \di \mu_g  = -  \int_{M} \nabla_{I_{k-1}} \Psi \Delta \nabla^{I_{k-1}} \Psi \, \di \mu_g \\
& \stackrel{\eqref{Def-R-Psi-ij-commute-nabla-laplacian} }{=} -  \int_{M} \nabla_{I_{k-1}} \Psi \left(  \nabla^{I_{k-1}}  \Delta \Psi +  \sum_{a+b= k-1} \nabla_{I_a} R_{imjn} * \nabla_{I_b}  \Psi  \right) \di \mu_g \\
& =   \int_{M} \Delta \nabla_{I_{k-2}} \Psi  \nabla^{I_{k-2}}  \Delta \Psi \, \di \mu_g + \int_M  \sum_{a+b= k-1} \nabla_{I_a} R_{imjn} * \nabla_{I_b}  \Psi * \nabla_{I_{k-1}} \Psi \, \di \mu_g \\
&  \stackrel{\eqref{Def-R-Psi-ij-commute-nabla-laplacian} }{=}   \int_{M} |\nabla^{I_{k-2}}  \Delta \Psi |^2 \, \di \mu_g + \int_M \sum_{a+b= k-2} \nabla_{I_a} R_{imjn} * \nabla_{I_b}  \Psi * \nabla^{I_{k-2}}  \Delta \Psi \, \di \mu_g \\
& \quad + \int_M \sum_{a+b= k-1} \nabla_{I_a} R_{imjn} * \nabla_{I_b}  \Psi * \nabla_{I_{k-1}} \Psi \, \di \mu_g.
\end{align*}
Applying again integration by parts to \[ \int_M \sum_{a+b= k-2} \nabla_{I_a} R_{imjn} * \nabla_{I_b}  \Psi * \nabla^{I_{k-2}}  \Delta \Psi \, \di \mu_g,\] we have
\begin{align*}
\|\nabla_{I_k} \Psi\|^2_{L^2}  
& =  \| \nabla_{I_{k-2}} \Delta \Psi \|^2_{L^2}  + \int_M \sum_{a+b= k-1} \nabla_{I_a} R_{imjn} * \nabla_{I_b}  \Psi * \nabla^{I_{k-3}}  \Delta \Psi \, \di \mu_g \\
& \quad + \int_M \sum_{a+b= k-1} \nabla_{I_a} R_{imjn} * \nabla_{I_b}  \Psi * \nabla_{I_{k-1}} \Psi \, \di \mu_g.
\end{align*}
For terms like $\int_M \nabla_{I_{k-1}} R_{imjn} *  \Psi * \nabla_{I_{k-1}} \Psi \, \di \mu_g$, we apply integration by parts so that
\begin{align*}
 & \int_M \nabla_{I_{k-1}} R_{imjn} *  \Psi * \nabla_{I_{k-1}} \Psi \, \di \mu_g \\
 = & \int_M \nabla_{I_{k-2}} R_{imjn} * \nabla \Psi * \nabla_{I_{k-1}} \Psi + \nabla_{I_{k-2}} R_{imjn} * \Psi * \nabla_{I_{k}} \Psi \, \di \mu_g \\
  = & \int_M \nabla_{I_{k-3}} \n R_{imjn} * \nabla \Psi * \nabla_{I_{k-1}} \Psi + \nabla_{I_{k-3}} \n R_{imjn} * \Psi * \nabla_{I_{k}} \Psi\, \di \mu_g.
\end{align*}
Thus, in summary, we derive, for $k \geq 3$,
\begin{align*}
 \|\nabla_{I_k} \Psi \|^2_{L^2}  \lesssim {}&  \| \nabla_{I_{k-2}} \Delta \Psi \|^2_{L^2}  +  \|R_{imjn}\|_{L^\infty}  \| \Psi \|^2_{H_{k-1}} \\
 & + \sum_{a+b=k-3}  \|\nabla_{I_a} \nabla R_{imjn} \ast \nabla_{I_b} \nabla \Psi \|_{L^2}  \| \Psi \|_{H_{k-1}} \\
 &+ \| \nabla_{I_{k-3}} \n R_{imjn} * \Psi  \|_{L^2}  \| \nabla_{I_{k}} \Psi \|_{L^2}. 
\end{align*}
By Proposition \ref{prop-Multiplication}, it follows that if $N > \frac{n}{2}$, and $1\leq N-(k-3)$, that is, $k \leq N+2$, 
\begin{align}
  \|\nabla_{I_k} \Psi \|^2_{L^2} & \lesssim  \| \nabla_{I_{k-2}} \Delta \Psi \|^2_{L^2} +  \|R_{imjn}\|_{L^\infty}  \| \Psi \|^2_{H_{k-1}} \nnb \\
 & + \|\n R_{imjn} \|_{H_{N-1}} \| \n \Psi \|_{H_{k-2}} \| \Psi \|_{H_{k-1}} \nnb \\
 & + \|\n R_{imjn} \|_{H_{N-1}} \| \Psi \|_{H_{k-2}} \| \nabla_{I_{k}} \Psi \|_{L^2}. \label{ineq-Hk-Hwk}
\end{align}
Noting that 
\alis{
& \|\n R_{imjn} \|_{H_{N-1}} \| \Psi \|_{H_{k-2}} \| \nabla_{I_{k}} \Psi \|_{L^2} \\
 \leq & a^{-1}  \|\n R_{imjn} \|^2_{H_{N-1}} \| \Psi \|^2_{H_{k-2}} + a \| \nabla_{I_{k}} \Psi \|^2_{L^2},
 }
and choosing $a$ to be small so that $a \| \nabla_{I_{k}} \Psi \|^2_{L^2}$ can be absorbed by the left hand side of \eqref{ineq-Hk-Hwk}, we arrive at,
\alis{
  \|\nabla_{I_k} \Psi \|^2_{L^2} & \lesssim  \| \nabla_{I_{k-2}} \Delta \Psi \|^2_{L^2}  + \|R_{imjn}\|_{L^\infty}   \| \Psi \|^2_{H_{k-1}} \nnb \\
 & \quad + \left( \| \nabla R_{imjn} \|_{H_{N-1}} + \| \nabla R_{imjn} \|^2_{H_{N-1}} \right)  \| \Psi \|^2_{H_{k-1}},
} for $3 \leq k\leq N+2$.
By induction, we conclude that, for all $0 \leq k\leq N+2$,
\ali{Bochner-k}
 {\|\nabla_{I_k} \Psi \|^2_{L^2}  \lesssim {}& \| \nabla^{\mathring{k}} \Delta^{ [\frac{k}{2}] } \Psi \|_{L^2} \nnb\\
 &+ C \left( \|R_{imjn}\|_{L^\infty}, \, \| \nabla R_{imjn} \|_{H_{N-1}} \right)  \| \Psi \|^2_{ H'_{ k-1}},
}
where the constant $C \left( \|R_{imjn}\|_{L^\infty}, \, \| \nabla R_{imjn} \|_{H_{N-1}} \right)$ depends on $\|R_{imjn}\|_{L^\infty}$ and $\| \nabla R_{imjn} \|_{H_{N-1}}$.

 \epf      
 
 \subsection{Elliptic estimates}\label{sec-proof-lem-ellip}
Thanks to Proposition \ref{prop-elliptic-Delta}, we will prove Lemma \ref{lemm-elliptic}.
 \bpf[Proof of Lemma \ref{lemm-elliptic}]

In view of the anti-symmetric property for $\Ph$ \eqref{antisymm-Ph}, the equations \eqref{eq-elliptic-Ph} indicate that $\Ph$ obeys a Laplacian equation,
\alis{
 & \De \Ph_{imjn} = \n^p \n_p \Ph_{imjn} \\
 = {}&   - \n^p \( \n_i \Ph_{mpjn} + \n_m \Ph_{pijn} \) \pm \n B \\
 = {}& \n_i \n^p \Ph_{pmjn} - \n_m \n^p \Ph_{pijn} \pm \n B  + R_{imjn} \ast  \Ph \\
 = {}&  \n_i A_{mjn} - \n_m A_{ijn} \pm \n B  + R_{imjn} \ast  \Ph.
 }
Consequently,  \eqref{elliptic-est-Ph} with $k=1$ follows straightforwardly, namely,
\[
\|\n \Ph\|^2 \lesssim  2 \|B\|^2  + \|A\|^2 + \|R_{imjn}\|_{L^\infty} \cdot \|\Ph \|^2.
\]

Moreover,  the Laplacian equation for $\Ph$ together with the inequality \eqref{Bochner-2} 
implies \eqref{elliptic-est-Ph} with $k=2$.

For the general $k \geq 3$, we have for $1 \leq N-(k-3)$, with $N >\frac{n}{2}$,
\alis{
\| \nabla^{\mathring{k}} \Delta^{ [\frac{k}{2}] } \Ph \|^2 \lesssim {}& \|\n_{I_{k-1}} A\|^2 + \|\n_{I_{k-1}} B \|^2 + \| \n_{I_{k-2}} \( R_{imjn} \ast \Ph \) \|^2 \\
 \lesssim{}& \|\n_{I_{k-1}} A\|^2 + \|\n_{I_{k-1}} B \|^2 + \| R_{imjn} \ast \n_{I_{k-2}} \Ph  \|^2 \\
 &+ \| \n_{I_{k-3}} \( \n R_{imjn} \ast \Ph \) \|^2  \\
 \lesssim {} & \|\n_{I_{k-1}} A\|^2 + \|\n_{I_{k-1}} B \|^2 + \|R_{imjn}\|^2_{L^\infty}   \| \Ph \|^2_{H_{k-2}} \\
 &+ \| \nabla R_{imjn} \|^2_{H_{N-1}}  \| \Ph \|^2_{H_{k-2}},
} where in the last inequality, Proposition \ref{prop-Multiplication} is used.  
Combining the above estimates with \eqref{E:Bochner-k}, 
we conclude \eqref{elliptic-est-Ph}  by induction. 

In the end, \eqref{elliptic-est-Ps}  follows in the same manner.
\epf

\subsection{An identity for the Bianchi equations}\label{sec-id-Bianchi}

\begin{proof}[Proof of Lemma \ref{lemma-div-curl}]
Appealing to the commuting identity \eqref{E:comm-2}, we can prove this lemma by induction.
In fact, it suffices to keep track of the principle part of $R_{imjn}$ (without derivatives) in the calculations. By the Gauss equation \eqref{Gauss-Riem-hat-k}, we know that the principle part of $R_{imjn}$ is $-\left(g_{ij} g_{mn} -g_{in} g_{mj} \right)$. In what follows, the notation $\simeq$ refers to equalling up to some divergence forms or error terms containing $O_{ambn}$. For example, $$R_{imjn}\simeq -\left(g_{ij} g_{mn} -g_{in} g_{mj} \right).$$ In addition, we use the following notations as well,
\begin{equation}\label{def-k-prime} 
\mathring{k} =
\begin{cases}
0, &\text{if} \,\, k \,\, \text{is even}, \\
1, &\text{if} \,\, k \,\, \text{is odd};
\end{cases} 
\qquad
k^\prime =
\begin{cases}
1, &\text{if} \,\, k \,\, \text{is even}, \\
0, &\text{if} \,\, k \,\, \text{is odd}.
\end{cases}
\end{equation}

First of all,
\begin{align}\label{pre-case}
& \quad \n^p \nabla^{\mathring{k}} \Delta^{ [\frac{k}{2}] } H_{p j i} \cdot \nabla^{\mathring{k}} \Delta^{ [\frac{k}{2}] } E^{i j}  +    \n_p \nabla^{\mathring{k}} \Delta^{ [\frac{k}{2}] } E_{ij}  \cdot \nabla^{\mathring{k}} \Delta^{ [\frac{k}{2}] } H^{p i j} \nnb \\
& =  g^{ab}  \nabla^p  \nabla^{\mathring{k}}_a \Delta^{ [\frac{k}{2}] } H_{p (i j)} \cdot  \nabla^{\mathring{k}}_b \Delta^{ [\frac{k}{2}] } E^{ij} +  g^{ab} \nabla_p   \nabla^{\mathring{k}}_a \Delta^{ [\frac{k}{2}] } E_{i j} \cdot  \nabla^{\mathring{k}}_b \Delta^{ [\frac{k}{2}] }  H^{ p (i j)}  \nnb \\
&=  \nabla^p \left(g^{ab} \nabla^{\mathring{k}}_a \Delta^{ [\frac{k}{2}] }  H_{p (i j)} \cdot  \nabla^{\mathring{k}}_b \Delta^{ [\frac{k}{2}] } E^{ij} \right),
\end{align}
which concludes \eqref{E:identity} with $k=0$.

To justify the $k=1$ case, we calculate
\alis{
 & \nabla_{i_1} \nabla^p  H_{p j i} \cdot \nabla^{i_1} E^{ij}  + \nabla_{i_1} \nabla_p  E_{i j} \cdot \nabla^{i_1} H^{p ij} \\
 \stackrel{\eqref{E:comm-2}}{=} {}&   \nabla^p \nabla_{i_1} H_{p j i} \cdot \nabla^{i_1} E^{ij} + \nabla_p   \nabla_{i_1} E_{i j} \cdot \nabla^{i_1} H^{ p i j} \\
 & + \( R_{i_1}{}^{p}{}_ {p}{}^{b} H_{b j i} + R_{i_1}{}^{p}{}_ {j}{}^{b} H_{p b i} + R_{i_1}{}^{p}{}_ {i}{}^{b} H_{p j b} \) \cdot \nabla^{i_1} E^{ij} \\
 & + \( R_{i_1 p i}{}^{b} E_{b j} +  R_{i_1 p j}{}^{b} E_{i b} \) \cdot \nabla^{i_1} H^{p i j} \\
\stackrel{ \eqref{pre-case} }{=} {} &   \nabla^p \left( \nabla_{i_1}  H_{p (i j)} \cdot  \nabla^{i_1}  E^{ij} \right)  \\
  &  - \( -(n-1) g^b_{i_1} H_{b j i} + (g_{i_1 j} g^{p b} - g_{i_1}^b g_j^p) H_{p b i} + (g_{i_1 i} g^{p b} - g_{i_1}^b g_i^p)  H_{p j b} \) \cdot \nabla^{i_1} E^{ij} \\
 & - \(  (g_{i_1 i} g_p^{ b} - g_{i_1}^b g_{p i})  E_{b j} + (g_{i_1 j} g_p^{ b} - g_{i_1}^b g_{j p} )  E_{i b} \) \cdot \nabla^{i_1} H^{p i j} \\
  & + O  * H * \nabla E + O * E * \nabla H \\
= {} &    \nabla^p \left( \nabla_{i_1}  H_{p (j i)} \cdot  \nabla^{i_1}  E^{ij} \right)   \\
&  + \( (n-1) H_{i_1 j i}  + H_{j i_1 i} +  H_{i j i_1} \) \cdot \nabla^{i_1} E^{ij}  - \(  g_{i_1 i} E_{p j} + g_{i_1 j}  E_{i p} \) \cdot \nabla^{i_1} H^{p i j} \\
& + O * H * \nabla E + O  * E * \nabla H,
}
where in the last equality, we have used the fact that $H$ is trace-free. Moreover, by the ``Bianchi identity'' of $H$ \eqref{cyclic-H}, $$ H_{j i_1 i} +  H_{i j i_1} = - H_{ i_1 i j}, $$ and then it follows that
\alis{
& \( (n-1) H_{i_1 j i}  + H_{j i_1 i} +  H_{i j i_1} \) \cdot \nabla^{i_1} E^{ij}  - \(  g_{i_1 i} E_{p j} + g_{i_1 j}  E_{i p} \) \cdot \nabla^{i_1} H^{p i j} \\
=& \( (n-1) H_{i_1 j i}  - H_{ i_1 i j}  \) \cdot \nabla^{i_1} E^{ij}  -  E_{p j} \nabla_i H^{p i j}  - E_{i p} \nabla_j H^{p i j} \\
=& (n-1) H_{i_1 j i} \cdot \nabla^{i_1} E^{ij}   - H_{ i_1 i j} \cdot \nabla^{i_1} E^{ij} - \nabla_i ( E_{p j}  H^{p i j} ) +  \nabla_i  E_{p j}  H^{p i j} \\
=& (n-1) H_{i_1 j i} \cdot \nabla^{i_1} E^{ij}   - 2 H_{ i_1 i j} \cdot \nabla^{i_1} E^{ij} - \nabla_i ( E_{p j}  H^{p i j} )   \\
=& (n-3) H_{i_1 j i} \cdot \nabla^{i_1} E^{ij} - \nabla_i ( E_{p j}  H^{p i j} )\\
=& (n-3)  \nabla^{i_1} ( H_{i_1 j i} \cdot E^{ij}) - (n-3) \nabla^{i_1} H_{i_1 j i} \cdot  E^{ij} + \nabla_i ( E_{p j}  H^{i p j} )   \\
=& - (n-3) \nabla^{i_1} H_{i_1 j i} \cdot  E^{ij}  + (n-2)  \nabla^{i_1} ( H_{i_1 (i j)} \cdot E^{ij}),
}
where in the second identity, the fact $E_{i p} \nabla_j H^{p i j}=0$ (by virtue of the antisymmetry of $H_{pij}$ in the first two indices) is used.
Putting all the above calculations together, we deduce
\alis{
 & \nabla_{i_1} \nabla^p  H_{p j i} \cdot \nabla^{i_1} E^{ij}  + \nabla_{i_1} \nabla_p  E_{i j} \cdot \nabla^{i_1} H^{p ij} \\
= {} &    \nabla^p \left( \nabla_{i_1}  H_{p (i j)} \cdot  \nabla^{i_1}  E^{ij} \right)   + (n-2) \nabla^p \( H_{ p (i j)} \cdot E^{i j}  \)  \\
  & - (n-3) \nabla^{p} H_{p j i} \cdot  E^{ij} + O * H * \nabla E + O * E * \nabla H.
}
This confirms \eqref{E:identity} in the case of $k=1$.

The proof for the general $k$ case follows with the same idea. Suppose Lemma \ref{lemm-elliptic} holds for both
\ali{indu-k-1}{
&  \Delta^{ k } \nabla^p   H_{p j i} \cdot  \Delta^{k } E^{ij}   + \Delta^{ k } \nabla_p   E_{i j} \cdot   \Delta^{k }  H^{p ij}  
}
and
\ali{indu-k-2}{
&\quad \nabla_a \Delta^{k } \nabla^p  H_{p j i} \cdot  \nabla^a \Delta^{ k} E^{ij}  + \nabla_a \Delta^{ k } \nabla_p  E_{i j} \cdot  \nabla^a \Delta^{ k}  H^{p ij},
}
with  $k \in \mathbb{Z}_+$ and $E, \, H$ being any two tensors satisfying the assumptions in Lemma \ref{lemm-elliptic}, we will show that it holds with $k$ replaced by $k+1$. 
 
{\bf Step I.} We first give the proof for the case of \eqref{E:indu-k-1} with $k$ replaced by $k+1$. From the commuting identity \eqref{E:comm-2}, we compute 
\alis{
& \Delta^{k+1 } \nabla^p  H_{p j i} \cdot \Delta^{ k+1} E^{ij}  + \Delta^{ k +1} \nabla_p  E_{i j} \cdot \Delta^{ k+1}  H^{p ij} \\
={}& \Delta^{k} \nabla^p \De H_{p j i} \cdot \Delta^{ k+1} E^{ij}  + \Delta^{ k } \nabla_p \De  E_{i j} \cdot \Delta^{ k+1}  H^{p ij}  \\
&- \Delta^k  (2 R^p{}_{a p}{}^{b} \nabla^a H_{b j i} - R^{p b} \nabla_b H_{p j i}) \cdot  \Delta^{k+1 } E^{ij} \\
&- \Delta^k  (  2 R^p{}_{a j}{}^{b} \nabla^a H_{p b i} + 2 R^p{}_{a i}{}^b \n^aH_{pj b}  ) \cdot  \Delta^{k+1 } E^{ij} \\
&-  \Delta^k  (2R_{p a i}{}^{b} \nabla^a E_{b j} + 2R_{p a j}{}^{b} \nabla^a E_{i b} - R_p^b \nabla_b E_{i j}) \cdot  \Delta^{k+1 } H^{p i j} \\
&+ \Delta^k (\nabla R_{a b m n} * H) * \Delta^{k+1} E + \Delta^k (\nabla R_{a b m n} * E) * \Delta^{k+1} H \\
={}&  \Delta^{k} \nabla^p ( \De H_{p j i}) \cdot \Delta^{ k} (\De E^{ij})  + \Delta^{ k } \nabla_p ( \De  E_{i j} ) \cdot \Delta^{ k} ( \De  H^{p ij}) \\
&- \Delta^k  (  2 R^p{}_{a j}{}^{b} \nabla^a H_{p b i} + 2 R^p{}_{a i}{}^b \n^aH_{pj b} + R^{p b} \nabla_b H_{p j i}) \cdot  \Delta^{k+1 } E^{ij} \\
&-  \Delta^k  (2R_{p a i}{}^{b} \nabla^a E_{b j} + 2R_{p a j}{}^{b} \nabla^a E_{i b} - R_p^b \nabla_b E_{i j}) \cdot  \Delta^{k+1 } H^{p i j} \\
&+ \Delta^k ( \nabla O * H) * \Delta^{k+1} E + \Delta^k ( \nabla O * E) * \Delta^{k+1} H\\
={}&  \Delta^{k} \nabla^p ( \De H_{p j i}) \cdot \Delta^{ k} (\De E^{ij})  + \Delta^{ k } \nabla_p ( \De  E_{i j} ) \cdot \Delta^{ k} ( \De  H^{p ij}) \\
&+ \Delta^k  \(  2 (g^p_j g^b_a - g^{pb} g_{a j}) \nabla^a H_{p b i} + 2 (g^p_i g_a^b - g^{p b} g_{a i}) \n^a H_{p j b}  \) \cdot  \Delta^{k+1 } E^{ij} \\
& + (n-1) \Delta^k \nabla^p H_{p j i} \cdot  \Delta^{k+1 } E^{ij} -  (n-1)  \Delta^k \nabla_p E_{i j} \cdot  \Delta^{k+1 } H^{p i j}  \\
&+  \Delta^k \( 2 (g_{p i} g_a^b - g_p^b g_{a i}) \nabla^a E_{b j} + 2 (g_{p j} g_a^b - g_p^b g_{a j}) \nabla^a E_{i b} \) \cdot  \Delta^{k+1 } H^{p i j} \\
&+ \De^k (O  \ast \n H) \De^{k+1} E + \De^k (O \ast \n E) \De^{k+1} H \\
&+ \Delta^k ( \nabla O  * H) * \Delta^{k+1} E + \Delta^k ( \nabla O  * E) * \Delta^{k+1} H.
}
Due to the trace-free property of $H$, the above formulation further reduces to
\alis{
& \Delta^{k+1 } \nabla^p  H_{p j i} \cdot \Delta^{ k+1} E^{ij}  + \Delta^{ k +1} \nabla_p  E_{i j} \cdot \Delta^{ k+1}  H^{p ij} \\
={}&  \Delta^{k} \nabla^p ( \De H_{p j i}) \cdot \Delta^{ k} (\De E^{ij})  + \Delta^{ k } \nabla_p ( \De  E_{i j} ) \cdot \Delta^{ k} ( \De  H^{p ij}) \\
&+ 2 \Delta^k  (   \nabla^b H_{j b i} +   \n^b H_{i j b}  ) \cdot  \Delta^{k+1 } E^{ij} \\
& + (n-1) \Delta^k \nabla^p H_{p j i} \cdot  \Delta^{k+1 } E^{ij} -  (n-1)  \Delta^k \nabla_p E_{i j} \cdot  \Delta^{k+1 } H^{p i j}  \\
&- 2 \Delta^k  \(  \nabla_i E_{p j} +  \nabla_j E_{i p} \) \cdot  \Delta^{k+1 } H^{p i j} \\
&+ \De^k \n (O \ast  H) \ast \De^{k+1} E + \De^k \n (O \ast  E) \ast \De^{k+1} H.
}
Note that in the third and fifth lines of the above identity, the following two terms vanish \[ 2 \Delta^k   \n^b H_{i j b} \cdot  \Delta^{k+1 } E^{ij}=0, \quad - 2 \Delta^k    \nabla_j E_{i p} \cdot  \Delta^{k+1 } H^{p i j}=0, \]  for the anti-symmetry of $H$ in the first two indices. As a result,
\alis{
& \Delta^{k+1 } \nabla^p  H_{p j i} \cdot \Delta^{ k+1} E^{ij}  + \Delta^{ k +1} \nabla_p  E_{i j} \cdot \Delta^{ k+1}  H^{p ij} \\
={}&  \Delta^{k} \nabla^p ( \De H_{p j i}) \cdot \Delta^{ k} (\De E^{ij})  + \Delta^{ k } \nabla_p ( \De  E_{i j} ) \cdot \Delta^{ k} ( \De  H^{p ij}) \\
&- 2 \Delta^k  \nabla^b H_{b j i} \cdot  \Delta^{k+1 } E^{ij} +  2 \Delta^k  \nabla_i E_{p j} \cdot  \Delta^{k+1 } H^{ i p j} \\
& + (n-1) \Delta^k \nabla^p H_{p j i} \cdot  \Delta^{k+1 } E^{ij} -  (n-1)  \Delta^k \nabla_p E_{i j} \cdot  \Delta^{k+1 } H^{p i j}  \\
&+ \De^k \n (O \ast  H) \ast \De^{k+1} E + \De^k \n (O \ast  E) \ast \De^{k+1} H \\
={}&  \Delta^{k} \nabla^p ( \De H_{p j i}) \cdot \Delta^{ k} (\De E^{ij})  + \Delta^{ k } \nabla_p ( \De  E_{i j} ) \cdot \Delta^{ k} ( \De  H^{p ij}) \\
& + (n-3) \Delta^k \nabla^p H_{p j i} \cdot  \Delta^{k+1 } E^{ij} -  (n-3)  \Delta^k \nabla_p E_{i j} \cdot  \Delta^{k+1 } H^{p i j}  \\
&+ \De^k \n (O \ast  H) \ast \De^{k+1} E + \De^k \n (O \ast  E) \ast \De^{k+1} H.
}
In addition, the second line on the right hand of the last equality can be rearranged as
\alis{
& (n-3) \Delta^k \nabla^p H_{p j i} \cdot  \Delta^{k+1 } E^{ij} -  (n-3)  \Delta^k \nabla_p E_{i j} \cdot  \Delta^{k+1 } H^{p i j} \\
={}& (n-3) \n_a \( \Delta^k \nabla^p H_{p j i} \cdot \n^a \Delta^{k } E^{ij} \)  - (n-3) \n_a  \Delta^k \nabla^p H_{p j i} \cdot  \n^a  \Delta^{k } E^{ij}  \\
&-  (n-3) \n_a \( \Delta^k \nabla_p E_{i j} \cdot \n^a \Delta^{k } H^{p i j} \) +  (n-3) \n_a \Delta^k \nabla_p E_{i j} \cdot \n^a \Delta^{k } H^{p i j}. 
}

In summary, we arrive at
\ali{id-induction-2k+2}{
& \Delta^{k+1 } \nabla^p  H_{p j i} \cdot \Delta^{ k+1} E^{ij}  + \Delta^{ k +1} \nabla_p  E_{i j} \cdot \Delta^{ k+1}  H^{p ij}\nnb \\
={}&  \Delta^{k} \nabla^p ( \De H_{p j i}) \cdot \Delta^{ k} (\De E^{ij})  + \Delta^{ k } \nabla_p ( \De  E_{i j} ) \cdot \Delta^{ k} ( \De  H^{p ij}) \nnb\\
& + (n-3) \( \n_a  \Delta^k \nabla^p H_{p j i} \cdot  \n^a  \Delta^{k } E^{ij} + \n_a \Delta^k \nabla_p E_{i j} \cdot \n^a \Delta^{k } H^{p i j} \) \nnb \\
& -2  (n-3) \n_a  \Delta^k \nabla^p H_{p j i} \cdot  \n^a  \Delta^{k } E^{ij} \nnb \\
& + (n-3) \n_a \( \Delta^k \nabla^p H_{p j i} \cdot \n^a \Delta^{k } E^{ij} \) \nnb \\
& -  (n-3) \n_a \( \Delta^k \nabla_p E_{i j} \cdot \n^a \Delta^{k } H^{p i j} \) \nnb \\
&+ \De^k \n (O \ast  H) \ast \De^{k+1} E + \De^k \n (O \ast  E) \ast \De^{k+1} H.
}
By applying the inductive assumption \eqref{E:indu-k-1}--\eqref{E:indu-k-2} to the first two lines of the right hand side of \eqref{E:id-induction-2k+2}, we further achieve
\ali{induction-2k+2-a}{
& \Delta^{k+1 } \nabla^p  H_{p j i} \cdot \Delta^{ k+1} E^{ij}  + \Delta^{ k +1} \nabla_p  E_{i j} \cdot \Delta^{ k+1}  H^{p ij} \nnb \\
\simeq {}& -\sum_{0\leq l < 2k} C_{2k}^l (n-3)^{2k-l} \( \nabla^{\mathring{l}} \Delta^{ [\frac{l}{2}] } \nabla^p ( \De H_{p j i}) \cdot \nabla^{\mathring{l}} \Delta^{ [\frac{l}{2}] } (\De E^{ij}) \) \nnb \\
& - (n-3) \sum_{0\leq l < 2k+1} C_{2k+1}^l (n-3)^{2k+1-l} \( \nabla^{\mathring{l}} \Delta^{ [\frac{l}{2}] } \nabla^p H_{p j i} \cdot  \nabla^{\mathring{l}} \Delta^{ [\frac{l}{2}] } E^{ij}  \) \nnb \\
& -2  (n-3) \n_a  \Delta^k \nabla^p H_{p j i} \cdot  \n^a  \Delta^{k } E^{ij}.
}
In view of the commuting identity \eqref{E:comm-2}, and the trace-free property of $H$, it follows that
\alis{
\n^p \De H_{pji} = {} & \De \n^p H_{p j i} - R^{pb} \n_b H_{p j i} + 2R^p{}_a{}_p{}^b \n^a H_{b j i}  \\
& +2 R^p{}_{a j}{}^b \n^a H_{p b i} + 2 R^p{}_{a i}{}^b \n^a H_{p j b} + \n R_{abmn} \ast H \\
={}&  \De \n^p H_{p j i} + R^{pb} \n_b H_{p j i}  +2 R^p{}_{a j}{}^b \n^a H_{p b i} + 2 R^p{}_{a i}{}^b \n^a H_{p j b} \\
={}&  \De \n^p H_{p j i} - (n-1) \n^p H_{p j i}  - 2 (g^p_j g_a^b - g^{pb} g_{a j} ) \n^a H_{p b i} \\
& - 2 (g^p_i g_a^b - g^{pb} g_{a i}) \n^a H_{p j b} + O \ast \n H + \n O \ast H \\
={}&  \De \n^p H_{p j i} - (n-3) \n^p H_{p j i} - 2 \n^pH_{ijp} + O \ast \n H + \n O \ast H.
}
Substituting the above identity into \eqref{E:induction-2k+2-a}, and noting that \[\nabla^{\mathring{l}} \Delta^{ [\frac{l}{2}] } \nabla^p H_{i j p}  \cdot \nabla^{\mathring{l}} \Delta^{ [\frac{l}{2}] } (\De E^{ij}) =0, \]
 we obtain
\ali{induction-2k+2-b}{
& \Delta^{k+1 } \nabla^p  H_{p j i} \cdot \Delta^{ k+1} E^{ij}  + \Delta^{ k +1} \nabla_p  E_{i j} \cdot \Delta^{ k+1}  H^{p ij}\nnb \\
\simeq {}& -\sum_{0\leq l < 2k} C_{2k}^l (n-3)^{2k-l} \( \nabla^{\mathring{l}} \Delta^{ [\frac{l}{2}] +1 } \nabla^p H_{p j i} \cdot \nabla^{\mathring{l}} \Delta^{ [\frac{l}{2}] +1}  E^{ij} \) \nnb \\
&+ \sum_{0\leq l < 2k} C_{2k}^l (n-3)^{2k+1-l} \( \nabla^{\mathring{l}} \Delta^{ [\frac{l}{2}] } \nabla^p H_{p j i} \cdot \nabla^{\mathring{l}} \Delta^{ [\frac{l}{2}] +1 }  E^{ij} \) \nnb \\
& - \sum_{0\leq l < 2k+1} C_{2k+1}^l (n-3)^{2k+2-l} \( \nabla^{\mathring{l}} \Delta^{ [\frac{l}{2}] } \nabla^p H_{p j i} \cdot  \nabla^{\mathring{l}} \Delta^{ [\frac{l}{2}] } E^{ij}  \) \nnb \\
& -2  (n-3) \n_a  \Delta^k \nabla^p H_{p j i} \cdot  \n^a  \Delta^{k } E^{ij}.
}
The third line in  \eqref{E:induction-2k+2-b} can be further computed via the Leibniz rule as follows
\alis{
& \sum_{0\leq l < 2k} C_{2k}^l (n-3)^{2k+1-l} \( \nabla^{\mathring{l}} \Delta^{ [\frac{l}{2}] } \nabla^p H_{p j i} \cdot \nabla^{\mathring{l}} \Delta^{ [\frac{l}{2}] +1 }  E^{ij} \) \\
={}&  \sum_{0\leq l < 2k} C_{2k}^l (n-3)^{2k+1-l} \n \( \nabla^{\mathring{l}} \Delta^{ [\frac{l}{2}] } \nabla^p H_{p j i} \cdot \nabla^{l^\prime} \Delta^{ [\frac{l+1}{2}]  }  E^{ij} \) \\
&- \sum_{0\leq l < 2k} C_{2k}^l (n-3)^{2k+1-l} \( \nabla^{l^\prime} \Delta^{ [\frac{l+1}{2}] } \nabla^p H_{p j i} \cdot \nabla^{l^\prime} \Delta^{ [\frac{l+1}{2}] }  E^{ij} \).
}
Furthermore, denoting $m=l+1$, we reformulate the last term above as
\alis{
&- \sum_{0 \leq l < 2k} C_{2k}^l (n-3)^{2k+1-l} \( \nabla^{l^\prime} \Delta^{ [\frac{l+1}{2}] } \nabla^p H_{p j i} \cdot \nabla^{l^\prime} \Delta^{ [\frac{l+1}{2}] }  E^{ij} \)\\
={}& - \sum_{1\leq m < 2k+1} C_{2k}^{m-1} (n-3)^{2k+2-m} \( \nabla^{\mathring{m}} \Delta^{ [\frac{m}{2}] } \nabla^p H_{p j i} \cdot \nabla^{\mathring{m}} \Delta^{ [\frac{m}{2}] }  E^{ij} \).
}
Similarly, denoting $m=l+2$, we re-express the second line of \eqref{E:induction-2k+2-b} as below
\alis{
&-\sum_{0\leq l < 2k} C_{2k}^l (n-3)^{2k-l} \( \nabla^{\mathring{l}} \Delta^{ [\frac{l}{2}] +1 } \nabla^p H_{p j i} \cdot \nabla^{\mathring{l}} \Delta^{ [\frac{l}{2}] +1}  E^{ij} \) \\
={}& - \sum_{2 \leq m < 2k+2} C_{2k}^{m-2} (n-3)^{2k+2-m} \( \nabla^{\mathring{m}} \Delta^{ [\frac{m}{2}] } \nabla^p H_{p j i} \cdot \nabla^{\mathring{m}} \Delta^{ [\frac{m}{2}] }  E^{ij} \).
}
Taking all the above rearrangements into account, we manage to show that \eqref{E:induction-2k+2-b} becomes
\alis{
& \Delta^{k+1 } \nabla^p  H_{p j i} \cdot \Delta^{ k+1} E^{ij}  + \Delta^{ k +1} \nabla_p  E_{i j} \cdot \Delta^{ k+1}  H^{p ij} \\
\simeq {}& - \sum_{2 \leq m < 2k+2} C_{2k}^{m-2} (n-3)^{2k+2-m} \( \nabla^{\mathring{m}} \Delta^{ [\frac{m}{2}] } \nabla^p H_{p j i} \cdot \nabla^{\mathring{m}} \Delta^{ [\frac{m}{2}] }  E^{ij} \) \\
& - \sum_{1\leq m < 2k+1} C_{2k}^{m-1} (n-3)^{2k+2-m} \( \nabla^{\mathring{m}} \Delta^{ [\frac{m}{2}] } \nabla^p H_{p j i} \cdot \nabla^{\mathring{m}} \Delta^{ [\frac{m}{2}] }  E^{ij} \) \\
& - \sum_{0\leq m < 2k+1} C_{2k+1}^m (n-3)^{2k+2-m} \( \nabla^{\mathring{m}} \Delta^{ [\frac{m}{2}] } \nabla^p H_{p j i} \cdot  \nabla^{\mathring{m}} \Delta^{ [\frac{m}{2}] } E^{ij}  \)\\
& -2  (n-3) \n_a  \Delta^k \nabla^p H_{p j i} \cdot  \n^a  \Delta^{k } E^{ij}.
}
Due to the formula \[ C_{2k}^{m-2} + C_{2k}^{m-1} + C_{2k+1}^m = C_{2k+1}^{m-1} + C_{2k+1}^m = C_{2k+2}^m, \] it follows that
\alis{
& \Delta^{k+1 } \nabla^p  H_{p j i} \cdot \Delta^{ k+1} E^{ij}  + \Delta^{ k +1} \nabla_p  E_{i j} \cdot \Delta^{ k+1}  H^{p ij} \\
\simeq {}& - \sum_{2 \leq m < 2k+1} C_{2k+2}^{m} (n-3)^{2k+2-m} \( \nabla^{\mathring{m}} \Delta^{ [\frac{m}{2}] } \nabla^p H_{p j i} \cdot \nabla^{\mathring{m}} \Delta^{ [\frac{m}{2}] }  E^{ij} \) \\
& - C_{2k}^{2k-1} (n-3) \( \n_a  \Delta^k \nabla^p H_{p j i} \cdot  \n^a  \Delta^{k } E^{ij} \)  \\
&- C_{2k}^{0} (n-3)^{2k+1} \( \nabla_a \nabla^p H_{p j i} \cdot \nabla^a E^{ij} \) \\
& - C_{2k+1}^1 (n-3)^{2k+1} \( \nabla_a  \nabla^p H_{p j i} \cdot  \nabla^a  E^{ij}  \) \\
&- C_{2k+1}^0 (n-3)^{2k+2} \(   \nabla^p H_{p j i} \cdot E^{ij}  \)\\
& -2  (n-3) \n_a  \Delta^k \nabla^p H_{p j i} \cdot  \n^a  \Delta^{k } E^{ij} \\
= {}& - \sum_{2 \leq m < 2k+1} C_{2k+2}^{m} (n-3)^{2k+2-m} \( \nabla^{\mathring{m}} \Delta^{ [\frac{m}{2}] } \nabla^p H_{p j i} \cdot \nabla^{\mathring{m}} \Delta^{ [\frac{m}{2}] }  E^{ij} \) \\
& - (2k+2) (n-3) \( \n_a  \Delta^k \nabla^p H_{p j i} \cdot  \n^a  \Delta^{k } E^{ij} \) \\
&- (n-3)^{2k+1} \( \nabla_a \nabla^p H_{p j i} \cdot \nabla^a E^{ij} \) \\
& - (2k+1)(n-3)^{2k+1} \( \nabla_a  \nabla^p H_{p j i} \cdot  \nabla^a  E^{ij}  \) \\
&-  (n-3)^{2k+2} \(   \nabla^p H_{p j i} \cdot E^{ij}  \) \\
= {}& - \sum_{2 \leq m \leq 2k+1} C_{2k+2}^{m} (n-3)^{2k+2-m} \( \nabla^{\mathring{m}} \Delta^{ [\frac{m}{2}] } \nabla^p H_{p j i} \cdot \nabla^{\mathring{m}} \Delta^{ [\frac{m}{2}] }  E^{ij} \) \\
& - (2k+2)(n-3)^{2k+1} \( \nabla_a  \nabla^p H_{p j i} \cdot  \nabla^a  E^{ij}  \) -  (n-3)^{2k+2} \(   \nabla^p H_{p j i} \cdot E^{ij}  \)\\
={}&  - \sum_{ 0 \leq m \leq 2k+1} C_{2k+2}^{m} (n-3)^{2k+2-m} \( \nabla^{\mathring{m}} \Delta^{ [\frac{m}{2}] } \nabla^p H_{p j i} \cdot \nabla^{\mathring{m}} \Delta^{ [\frac{m}{2}] }  E^{ij} \).
}
This verifies the case of \eqref{E:indu-k-1} with $k$ replaced by $k+1$.

{\bf Step II.}  Next, we only sketch the proof for the case of \eqref{E:indu-k-2} with $k$ replaced by $k+1$, since it is similar to Step I.
In analogy with \eqref{E:id-induction-2k+2}, we deduce
\alis{
& \nabla_a \Delta^{k+1 } \nabla^p  H_{p j i} \cdot  \nabla^a \Delta^{ k+1} E^{ij}  + \nabla_a \Delta^{ k +1} \nabla_p  E_{i j} \cdot  \nabla^a \Delta^{ k+1}  H^{p i j}  \\
={}& \n_a  \Delta^{k} \nabla^p ( \De H_{p j i}) \cdot \n^a \Delta^{ k} (\De E^{ij})  + \n_a \Delta^{ k } \nabla_p ( \De  E_{i j} ) \cdot \n^a \Delta^{ k} ( \De  H^{p ij}) \\
& + (n-3) \( \Delta^{k+1} \nabla^p H_{p j i} \cdot  \Delta^{k +1} E^{ij} + \Delta^{k+1} \nabla_p E_{i j} \cdot \Delta^{k+1 } H^{p i j} \) \\
& - 2 (n-3) \Delta^{k+1} \nabla^p H_{p j i} \cdot  \Delta^{k +1} E^{ij} \\
& + (n-3) \n^a \(\n_a \Delta^k \nabla^p H_{p j i} \cdot \Delta^{k +1} E^{ij} \) \\
&-  (n-3) \n^a \( \n_a \Delta^k \nabla_p E_{i j} \cdot \Delta^{k +1} H^{p i j} \) \\
&+ \n \De^k \n (O  \ast  H) \ast \n \De^{k+1} E + \n \De^k \n (O  \ast  E) \ast \n \De^{k+1} H.
}
By applying the inductive assumption \eqref{E:indu-k-2} and the result of Step 1 to the first two lines on the right hand side of the above identity, we obtain,
\alis{
& \nabla_a \Delta^{k+1 } \nabla^p  H_{p j i} \cdot  \nabla^a \Delta^{ k+1} E^{ij}  + \nabla_a \Delta^{ k +1} \nabla_p  E_{i j} \cdot  \nabla^a \Delta^{ k+1}  H^{p i j}  \\
\simeq {}& -\sum_{0\leq l < 2k+1} C_{2k+1}^l (n-3)^{2k+1-l} \( \nabla^{\mathring{l}} \Delta^{ [\frac{l}{2}] } \nabla^p ( \De H_{p j i}) \cdot \nabla^{\mathring{l}} \Delta^{ [\frac{l}{2}] } (\De E^{ij}) \) \\
& - (n-3) \sum_{0\leq l < 2k+2} C_{2k+2}^l (n-3)^{2k+2-l} \( \nabla^{\mathring{l}} \Delta^{ [\frac{l}{2}] } \nabla^p H_{p j i} \cdot  \nabla^{\mathring{l}} \Delta^{ [\frac{l}{2}] } E^{ij}  \)\\
& - 2 (n-3) \Delta^{k+1} \nabla^p H_{p j i} \cdot  \Delta^{k +1} E^{ij}.
}
For the rest of proof, we can follow the procedure of Step 1 to achieve
\alis{
& \nabla_a \Delta^{k+1 } \nabla^p  H_{p j i} \cdot  \nabla^a \Delta^{ k+1} E^{ij}  + \nabla_a \Delta^{ k +1} \nabla_p  E_{i j} \cdot  \nabla^a \Delta^{ k+1}  H^{p i j} \\
\simeq {}&  - \sum_{ 0 \leq m < 2k+3} C_{2k+3}^{m} (n-3)^{2k+3-m} \( \nabla^{\mathring{m}} \Delta^{ [\frac{m}{2}] } \nabla^p H_{p j i} \cdot \nabla^{\mathring{m}} \Delta^{ [\frac{m}{2}] }  E^{ij} \).
}
This finishes the proof for the case of \eqref{E:indu-k-2} with $k$ replaced by $k+1$.

\end{proof}
    
    \subsection{Proof of Proposition \ref{prop-decomp-Bianchi}}\label{sec-proof-conn}           
 In this subsection, we make use of the connection formula \eqref{E:eq-connection} to complete the proof of Proposition \ref{prop-decomp-Bianchi}.
\bpf[Proof of Proposition \ref{prop-decomp-Bianchi}]

To justify \eqref{id-DT-W-T},  we denote a $(0,3)$-tensor on $\M$ by \[ \Hwb_{\alpha \mu \nu} :=  \W_{\alpha \mu \nu t}. \]  Note that, $\Hwb$ is not an $M$-tensor. However, the projection of $\Hwb$ onto $M$ is $t \Hw$, which is an $M$-tensor in the sense of subsection \ref{def-M-tensor}. Using of the connection formula \eqref{E:eq-connection}, 
\begin{align*}
\D_{\p_t} \W_{i j l t} ={}&  \lie_{\p_t} \Hwb_{i j l} -  \W (\D_i \p_t, e_j, e_l, \p_t)  -  \W (e_i, \D_j \p_t, e_l, \p_t) \\
&  -  \W (e_i, e_j, \D_l \p_t, \p_t) -  \W (e_i, e_j, e_l,  \D_{\p_t} \p_t)\\
={}&  \lie_{\p_t} \Hwb_{i j l} + \ti k_i^q \W_{q j l t} + \ti k_j^q \W_{i q l t}   + \ti k_l^q  \W _{i j q t}.
\end{align*}
Here we use the notations \[\D_{\p_t} \W_{i j l t} := \D_{\p_t} \W_{\mu \nu \al t} \breve h^\mu_i  \breve h^\nu_j \breve h^{\al}_l, \quad  \lie_{\p_t} \Hwb_{ijl} :=  \lie_{\p_t} \Hwb_{\mu \nu \al} \breve h^{\mu}_i \breve h^{\nu}_j \breve h^{\al}_l.  \]
Since $[\p_t, e_i] =  \D_{\p_t} e_i - \D_i \p_t$ and by the connection formula \eqref{E:eq-connection} \[ [\p_t, \, e_i] =  \D_{\p_t} e_i - \D_i \p_t = \ti \n_{\p_t} e_i - \ti \n_i \p_t, \] it follows that \[ \lie_{\p_t} \Hwb_{ijl}  = \lie_{\p_t} \(t \Hw \) _{ijl}. \] As a consequence,
\[ \D_{\p_t} \W_{i j q t} = \lie_{\p_t} \(t \Hw \) _{i j q} + k_i^p \Hw_{p j q} + k_j^p \Hw_{i p q}    + k_q^p  \Hw_{i j p}, \] 
and then \eqref{id-DT-W-T} follows.
 
In the same way, we calculate 
\begin{align*}
\D_{\p_t} \W_{i p q j} 
={}& \lie_{\p_t} \W_{i p q j} + \ti k_{i}^l \W_{l p q j} + \ti k_{p}^l \W_{i l q j} + \ti k_{q}^l \W_{i p l j} + \ti k_{j}^l \W_{i p q l} \\
={}& \lie_{\p_t} \( t^{2} \Kw\)_{i p q j} + t^{2} \( \ti k_{i}^l \Kw_{l p q j} + \ti k_{p}^l \Kw_{i l q j} + \ti k_{q}^l \Kw_{i p l j} + \ti k_{j}^l \Kw_{i p q l} \)\\
={}& t  \lie_{\dtau} \Kw_{i p q j}  + 2 t \Kw_{i p q j}  + t \( k_{i}^l \Kw_{l p q j} + k_{p}^l \Kw_{i l q j} + k_{q}^l \Kw_{i p l j} + k_{j}^l \Kw_{i p q l} \),
\end{align*}
and
\alis{
\D_{\p_t} \W_{i t j t} ={}& \D_{\p_t} \Ew_{i j} - \W (e_i,\D_{\p_t} \p_t, e_j, \p_t) - \W ( e_i, \p_t, e_j,  \D_{\p_t}  \p_t )\\
={}&  \D_{\p_t} \Ew_{i j} = \lie_{\p_t} \Ew_{i j} - \Ew(\D_i \p_t, e_j) - \Ew(e_i, \D_j \p_t) \\
={}& \lie_{\p_t} \Ew_{i j} + \ti k_i^p \Ew_{p j} + \ti k_j^p \Ew_{i p } \\
={}& t^{-1}  \lie_{\dtau} \Ew_{i j} + t^{-1} \(  k_i^p \Ew_{p j} + k_j^p \Ew_{i p }\).
}
That is, we prove \eqref{id-DT-W-p} and \eqref{id-DT-W-TT}. 


In the end, due to $\D_{i} e_j = \ti \n_i e_j - \ti k_{i j} \p_t$, we have
\begin{align*}
 \D_p \W_{q t i j} = {}& \D_p \Hwb_{ijq} + \ti k_{p}^l \W_{q l i j}  \\
 ={}& \ti\nabla_p \( t \Hw \) _{i j q}+ \ti k_{p}^l \W_{q l i j} + \ti k_{p i} \W_{t j q t } + \ti k_{p j} \W_{ i t q t}   \\
= {}& \nabla_p \( t \Hw \) _{i j q} + t \( k_{p}^l \Kw_{q l i j} - k_{p i} \Ew_{q j} + k_{p j} \Ew_{q i}\).
\end{align*}
Similarly, the following identities
\alis{
 \D_p \W_{i m j n}  ={}& t^2 \ti \nabla_p \Kw_{i m j n} + \ti k_{p i} \W_{t m j n}  + \ti k_{p m} \W_{i t j n} + \ti k_{p j} \W_{i m t n}  + \ti k_{p n} \W_{i m j t} \\
= {}& t^2\( \nabla_p \Kw_{i m j n} - k_{p i} \Hw_{j n m }  + k_{p m} \Hw_{ j n i} - k_{p j} \Hw_{i m n} + k_{p n} \Hw_{i m j}\),
}
and
\alis{
\D_p \W_{i t j t} = {}& \ti \nabla_p \Ew_{i j} + \ti k_{p}{}^{\!q} \W_{i q j t} + \ti k_{p}{}^{\!q}\W_{j q i t }\\
={}& \nabla_p \Ew_{i j} + k_{p}{}^{\!q} \Hw_{i q j} + k_{p}{}^{\!q} \Hw_{j q i},
}
hold as well. Therefore, \eqref{id-Dp-W-T}, \eqref{id-Dp-W-p} and \eqref{id-Dp-W-TT} are concluded.


\epf

\end{document}